\documentclass[11pt,letterpaper]{scrarticle}
\usepackage[utf8]{inputenc}
\usepackage[T1]{fontenc}
\usepackage{amsmath}
\usepackage{amsfonts}
\usepackage{amssymb}
\usepackage{amsthm}
\usepackage{mathtools}
\usepackage{euscript}
\usepackage[dvipsnames]{xcolor}

\usepackage{hyperref}
\usepackage{subcaption}
\usepackage[inner=1in,outer=1in]{geometry}
\setlength{\abovecaptionskip}{-5pt} 

\usepackage{tikz}
\usepackage{tikz-cd}
\usetikzlibrary{positioning, calc}
\usetikzlibrary{decorations.markings}
\tikzstyle{vertex}=[inner sep=0pt]

\usepackage[backend=biber]{biblatex}
\addbibresource{CGroth.bib}

\theoremstyle{plain}
\newtheorem{thm}{Theorem}[section]
\newtheorem*{thm*}{Theorem}
\newtheorem{prop}[thm]{Proposition}
\newtheorem{lem}[thm]{Lemma}
\newtheorem{cor}[thm]{Corollary}
\newtheorem*{prop*}{Proposition}

\newtheorem{question}{Question}

\newtheorem{answer}{Answer}

\theoremstyle{definition}
\newtheorem{defn}[thm]{Definition}
\newtheorem{rmk}[thm]{Remark}

\newtheorem{nota}[thm]{Notation}

\theoremstyle{remark}
\newtheorem{example}[thm]{Example}

\numberwithin{equation}{section}

\def\on{\operatorname}
\def\sf{\mathsf}
\def\op{{\on{op}}}

\def\Mor{{\on{Mor}}}
\def\Fun{{\on{Fun}}}

\def\sDist{{\on{sDist}}}
\def\QConv{{\on{QConv}}}
\def\cint{{\on{C}\mkern-18mu\displaystyle \int  }}

\def\Cat{{\sf{Cat}}}
\def\Set{{\sf{Set}}}
\def\Vect{{\sf{Vect}}}
\def\CSet{{\sf{CSet}}}
\def\Conv{{\sf{Conv}}}
\def\Mat{{\sf{Mat}}}

\def\Assoc{{\sf{Assoc}}}
\def\Comm{{\sf{Comm}}}

\def\Prob{{\sf{FinProb}}}
\def\Fin{{\sf{Fin}}}

\def\bb{\mathbb}

\def\RR{{\bb{R}}}
\def\NN{{\bb{N}}}

\def\scr{\EuScript}
\def\I{{\scr{I}}}
\def\J{{\scr{J}}}
\def\C{{\scr{C}}}
\def\D{{\scr{D}}}
\def\P{{\scr{P}}}

\def\V{{\scr{V}}}



\definecolor{bluegray}{rgb}{0.4, 0.6, 0.8}
\definecolor{turquoise}{rgb}{0.2, 0.7, 0.6}




\usepackage{authblk}

\title{The operadic theory of convexity}
\author{Redi Haderi\footnote{redi.haderi@billent.edu.tr}}
\author{Cihan Okay\footnote{cihan.okay@bilkent.edu.tr}}
\author{Walker H. Stern\footnote{walker@walkerstern.com}}
\affil{{\small{Department of Mathematics, Bilkent University, Ankara, Turkey}}}

\begin{document}
	
	\maketitle
	
	\begin{abstract}
		In this article, we characterize convexity in terms of algebras over a PROP, and establish a tensor-product-like symmetric monoidal structure on the  category of convex sets. Using these two structures, and the theory of $\scr{O}$-monoidal categories developed in \cite{OMonGroth}, we state and prove a Grothendieck construction for lax $\scr{O}$-monoidal functors into convex sets. We apply this construction to the categorical characterization of entropy of Baez, Fritz, and Leinster, and to the study of quantum contextuality in the framework of simplicial distributions.
	\end{abstract}
	
	\tableofcontents
	
\section{Introduction}

	At its core, convexity is a deceptively simple condition. In, for example, an $\RR$-vector space $V$, the condition that a subset $C\subset V$ be convex is as simple as $C$ containing any line segment between points of $C$. However, the role that convexity plays in probability theory, optimization, and beyond quickly reveals great depths hidden behind the simple definition. 
	
	It is precisely this depth which makes abstract frameworks to study convexity desirable, and a number of such frameworks have been developed. Initially, the approach taken was simply to axiomatize the essential structure necessary to take convex combinations of elements of a set in a consistent way. This is the approach of, e.g., \cite{neumann_quasivariety_1970} and \cite{Swiriszcz}. In more recent years, a popular approach has been that of \cite{jacobs2009duality} and \cite{fritz2015convex}, which encodes an abstract version of convexity (over an arbitrary semiring $R$) in terms of algebras over a monad $D_R$ on the category $\Set$ of sets. 
	
	While the present paper starts from the perspective of convex sets as algebras over this monad, our goal is to characterize convexity in terms of operad-like structures, and to leverage the operadic perspective to provide new and useful constructions in convex sets. In particular, in Section \ref{sec:convprop}, we will introduce a PROP $\Conv_R$ whose algebras in $\Set$ are precisely $R$-convex sets in the sense of the aforementioned references, and explore its algebras in various categories. This operadic approach and perspective is presaged by a variety of works in the literature. In \cite{fritz2009presentation,fritz2015convex}, the PROP $\Conv_R$ was studied as a Lawvere theory under the name $\sf{FinStoMap}^\op$, and the operadic part of $\Conv_R$, which we term $\QConv_R$, was discussed by Leinster in \cite[Ch. 12]{Leinster_2021} and \cite{Leinster_2011} (in the former, he calls this operad $\Delta$, which notation we reserve for the simplex category, in the latter, $\mathbf{P}$).	Explicitly, the PROP $\Conv_R$ is the monoidal category with monoid of objects $\NN$, and with hom-sets $\Conv_R(m,n)$ given by the set of $n\times m$ \emph{convex $R$-valued matrices}, matrices $M$ such that the sum of the entries in each row of $M$ is $1$.  
	
	The main original contributions enabled by our operadic approach are generalizations of the Grothendieck construction (and the monoidal Grothendieck construction of \cite{MoellerVasilakopoulou}) to the convex setting. The structures that arise from this Grothendieck construction have bearing on a wide array of examples. Of particular note, the characterization of entropy given in \cite{BaezFritzLeinster} can be completely encapsulated in terms of structure-preserving functors out of a convex monoidal Grothendieck construction, as we describe in \ref{subsec:Entropy}. Similarly, generalizations of the convex monoids appearing in the study of quantum contextuality in \cite{KharoofSimplicial} arise as convex monoidal Grothendieck constructions. 
	
	The first of the two Grothendieck constructions we describe follows almost immediately from the operadic approach. By taking $\Conv_R$-algebras on both sides of the classical Grothendieck construction, we obtain
	
	\begin{prop*}
		The Grothendieck construction induces an equivalence of categories
		\[
		\begin{tikzcd}
			\cint_\scr{C}:&[-3em]\Fun(\scr{C},\CSet) \arrow[r] & \Conv(\sf{DFib}(\scr{C})).
		\end{tikzcd}
		\]
	\end{prop*}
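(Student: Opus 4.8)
The plan is to obtain $\cint_{\C}$ by applying $\Conv_R$-algebras to the classical Grothendieck correspondence. Recall that the ordinary Grothendieck construction gives an equivalence $\int_{\C}\colon\Fun(\C,\Set)\xrightarrow{\simeq}\sf{DFib}(\C)$, carrying a functor to its category of elements together with its projection to $\C$. I would first record that this is an equivalence of cartesian symmetric monoidal categories: $\Fun(\C,\Set)$ carries the pointwise product, while the categorical product in $\sf{DFib}(\C)\subseteq\Cat/\C$ is the fiber product $\scr{E}\times_{\C}\scr{F}$ (discrete fibrations are stable under this fiber product, with terminal object $\mathrm{id}_{\C}$). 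Since every equivalence preserves finite products, and in a cartesian monoidal category the tensor \emph{is} the categorical product, $\int_{\C}$ is automatically strong symmetric monoidal with respect to these structures.

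Granting this, the statement follows from two formal facts. First, $\scr{M}\mapsto\Conv(\scr{M})$ is functorial in symmetric monoidal functors and preserves equivalences: a $\Conv_R$-algebra is a strong symmetric monoidal functor out of $\Conv_R$, and post-composition with a symmetric monoidal equivalence (which has a monoidal pseudo-inverse) induces an equivalence on categories of such functors. Applying this to $\int_{\C}$ yields an equivalence
\[
\Conv\big(\Fun(\C,\Set)\big)\xrightarrow{\ \simeq\ }\Conv\big(\sf{DFib}(\C)\big).
\]
Second, I would establish the identification $\Conv\big(\Fun(\C,\Set)\big)\simeq\Fun(\C,\Conv(\Set))=\Fun(\C,\CSet)$. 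Since products in $\Fun(\C,\Set)$ are computed pointwise, the cartesian-closed exponential isomorphism $\Fun(\Conv_R,\Fun(\C,\Set))\cong\Fun(\C,\Fun(\Conv_R,\Set))$ restricts to algebras: a functor $\Conv_R\to\Fun(\C,\Set)$ preserves tensors pointwise exactly when its transpose $\C\to\Fun(\Conv_R,\Set)$ factors through $\Conv(\Set)=\CSet$, and morphisms in $\C$ correspond to $\Conv_R$-algebra maps. Composing this identification with the transported equivalence produces the desired $\cint_{\C}$.

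The principal care is needed in two places. The first is pinning down the cartesian monoidal structures so that $\int_{\C}$ is genuinely monoidal: one must confirm that fiber products over $\C$ are the categorical products in $\sf{DFib}(\C)$, and that these are precisely the structures defining $\Conv(\sf{DFib}(\C))$. The second, and more substantive, is the identification $\Conv(\Fun(\C,\Set))\simeq\Fun(\C,\CSet)$ via the exponential law, where one must verify that pointwise tensor-preservation transposes correctly to a fiberwise algebra structure. Neither step is deep — both reduce to the cartesian-closed structure of $\Cat$ and the $2$-functoriality of $\Conv(-)$ — but together they carry essentially all the content; as a byproduct one obtains the concrete description of $\cint_{\C}F$ as the category of elements of the underlying $\Set$-valued functor equipped with its evident fiberwise convex structure, by tracking the natural isomorphism $\int_{\C}(F\times G)\cong\int_{\C}F\times_{\C}\int_{\C}G$ through the operations of $\Conv_R$.
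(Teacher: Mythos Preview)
Your proposal is correct and follows essentially the same route as the paper: apply $\Conv(-)$ to the classical Grothendieck equivalence to obtain $\Conv(\Fun(\C,\Set))\simeq\Conv(\sf{DFib}(\C))$, and then identify the left-hand side with $\Fun(\C,\CSet)$ via the cartesian-closed currying isomorphism (the paper isolates this second step as a separate lemma, which is exactly your transposition argument). The only cosmetic difference is that you spell out explicitly why $\int_{\C}$ is symmetric monoidal for the cartesian structures, whereas the paper simply invokes that an equivalence of categories automatically induces an equivalence on $\Conv$-algebras.
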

	
	which appears in the text as Corollary \ref{cor:naiveGC}. Unwinding the definitions on the right-hand side, one sees that the appropriate notion of fibration is what we term a \emph{fibrewise convex discrete fibration}. Effectively, this simply a discrete fibration with convex structures on its fibres. This is, in itself, unremarkable, however, it is leveraging this construction that we will obtain the full, monoidal version of the Grothendieck construction. 
	
	\subsection{The convex tensor product}
	
	Our first substantial task in this paper is to provide explicit, computationally tractable constructions in convex sets. In particular, we characterize coequalizers of convex sets as set-theoretic quotients by equivalence relations which respect the convex structure. We leverage this description to give an explicit characterization of the coproduct of convex sets, which, motivated by the most geometric examples, we call the \emph{join}. 
	
	However, the construction of the greatest import is that of the \emph{convex tensor product}, which provides the symmetric monoidal structure at the heart of our Grothendieck construction. A convex analogue of bilinearity, which we term \emph{biconvexity}, shows up naturally in a number of settings. Formally, given convex sets $X$, $Y$, and $Z$, a map 
	\[
	\begin{tikzcd}
		f:&[-3em] X\times Y \arrow[r] &Z
	\end{tikzcd}
	\]
	is biconvex when, for any convex combinations $\sum_i \alpha_i x_i$ and $\sum_j \beta_j y_j$ in $X$ and $Y$, respectively, we have 
	\[
	f\left(\sum_i\alpha_i x_i,\sum_j \beta_j y_j\right)=\sum_{i,j} \alpha_i\beta_j f(x_i,y_j). 
	\]
	Biconvexity appears in a variety of contexts, from the composition maps when one attempts to enrich convex sets over themselves, to the convex categories and monoids studied in \cite{KharoofSimplicial}. 
	
	In precise analogy to the tensor product of vector spaces, we show that there is a symmetric monoidal structure $\otimes$ on the category $\CSet$ of convex sets, and that $X\otimes Y$ corepresents biconvex maps out of $X\times Y$. We also prove that the aforementioned convex categories of \cite{KharoofSimplicial} are precisely categories enriched over $(\CSet,\otimes)$. 
	
	\subsection{$\scr{O}$-monoidal categories and the Grothendieck construction}
	
	The second ingredient in our Grothendieck construction is a promotion of the classical Grothendieck construction to take as input a wide variety of monoidal structures. More precisely, corresponding to any operad $\scr{O}$ in $\Set$, we define a corresponding notion of $\scr{O}$-\emph{monoidal category}, which has monoidal-product-like $n$-ary operations for each operation in $\scr{O}(n)$, satisfying some coherence conditions. 
	
	In more detail: in a symmetric monoidal category $(\C,\otimes)$, one can define $n$-fold tensor product functors 
	\[
	\begin{tikzcd}
		\displaystyle\bigotimes_n :&[-3em] \C^n \arrow[r] & \C 
	\end{tikzcd}
	\]
	as well as natural isomorphisms between them and their composites, which must satisfy additional conditions. Generalizing this, in, for example, a $\QConv_R$-monoidal category $(\C,\otimes)$, every convex vector $(\alpha_1,\ldots,\alpha_n)$ in $R^n$ gives rise to an operation 
	\[
	\begin{tikzcd}
		\displaystyle\bigotimes_{(\alpha_1,\ldots,\alpha_n)} :&[-3em] \C^n \arrow[r] & \C 
	\end{tikzcd}
	\]
	and these operations must be compatible with the operadic composition up to natural isomorphism. There are then corresponding notions of lax $\scr{O}$-monoidal functors, and $\scr{O}$-monoidal natural transformations, which generalize the corresponding notions for symmetric monoidal categories.
	
	In a separate paper \cite{OMonGroth}, we develop a $\Set$-valued $\scr{O}$-monoidal Grothendieck construction, which we then leverage here to prove
	
	\begin{thm*}
		For a $\scr{O}$-monoidal category $(\scr{I},\odot)$, the classical Grothendieck construction induces an equivalence of categories 
		\[
		\begin{tikzcd}
			\cint_{\scr{I}}^\scr{O}:&[-3em] \Fun^{\scr{O},\on{lax}}((\scr{I},\odot),(\CSet,\otimes))\arrow[r] & \scr{O}\sf{CFib}_{\scr{I}}.
		\end{tikzcd}
		\]
	\end{thm*}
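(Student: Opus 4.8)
The plan is to glue together two equivalences already at our disposal, using the universal property of the convex tensor product as the bridge. First, observe that $(\CSet,\otimes)$ is canonically $\scr{O}$-monoidal for \emph{every} operad $\scr{O}$: since the commutative operad $\Comm$ is terminal among operads, the unique map $\scr{O}\to\Comm$ lets us restrict the symmetric monoidal (hence $\Comm$-monoidal) structure of $\otimes$, so that each operation $\mu\in\scr{O}(n)$ acts by the $n$-fold convex tensor product. Second, the forgetful functor $U\colon\CSet\to\Set$ is lax symmetric monoidal from $(\CSet,\otimes)$ to $(\Set,\times)$: its structure maps $U(X)\times U(Y)\to U(X\otimes Y)$ are the universal biconvex maps, and it preserves the monoidal unit (the one-point convex set) strictly. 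Consequently, postcomposition with $U$ sends any lax $\scr{O}$-monoidal functor into $(\CSet,\otimes)$ to one into $(\Set,\times)$.

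I would then define $\cint_{\scr{I}}^{\scr{O}}$ on an object $F$ by applying the classical Grothendieck construction to the underlying functor $UF\colon\scr{I}\to\Set$, obtaining a discrete fibration $p\colon\cint_{\scr{I}}UF\to\scr{I}$. This total space acquires two pieces of structure automatically: by Corollary \ref{cor:naiveGC} it carries fibrewise convex structures (coming from the fact that $F$ lands in $\CSet$), and by the $\Set$-valued $\scr{O}$-monoidal Grothendieck construction of \cite{OMonGroth} applied to $UF$ it carries an $\scr{O}$-monoidal structure over $(\scr{I},\odot)$. The crux is to identify how these two structures interact, and here the universal property of $\otimes$ does the work. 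By the iterated corepresentability of multiconvex maps, the lax structure map
\[
F(x_1)\otimes\cdots\otimes F(x_n)\longrightarrow F(\mu_{\scr{I}}(x_1,\ldots,x_n)),
\]
associated to $\mu\in\scr{O}(n)$ (where $\mu_{\scr{I}}$ denotes the operation $\mu$ applied in $(\scr{I},\odot)$) is the same datum as a multiconvex map $F(x_1)\times\cdots\times F(x_n)\to F(\mu_{\scr{I}}(x_1,\ldots,x_n))$, and the latter is exactly the comparison map of the $\scr{O}$-monoidal fibration structure. Thus the $\otimes$-laxness of $F$ is equivalent to the $\times$-lax comparison maps being fibrewise multiconvex, which is precisely the compatibility condition between the $\scr{O}$-monoidal and fibrewise-convex structures built into the definition of $\scr{O}\sf{CFib}_{\scr{I}}$.

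It remains to check that $\cint_{\scr{I}}^{\scr{O}}$ is fully faithful and essentially surjective. Full faithfulness follows from the same decomposition applied to morphisms: an $\scr{O}$-monoidal natural transformation of functors into $(\CSet,\otimes)$ is a natural transformation of the underlying $\Set$-valued functors that is both fibrewise convex and compatible with the lax structure maps; under $U$ and the $\Set$-valued equivalence the latter features identify it with a morphism of the associated $\scr{O}$-monoidal discrete fibrations, while fibrewise convexity makes it a morphism in $\scr{O}\sf{CFib}_{\scr{I}}$, and conversely. For essential surjectivity one runs the construction backwards: given an object of $\scr{O}\sf{CFib}_{\scr{I}}$, the $\Set$-valued equivalence reconstructs the underlying lax $\scr{O}$-monoidal functor $\scr{I}\to\Set$, Corollary \ref{cor:naiveGC} promotes its values to convex sets, and corepresentability upgrades the multiconvex $(\Set,\times)$-comparison maps to $(\CSet,\otimes)$-lax structure maps, yielding an $F$ with $\cint_{\scr{I}}^{\scr{O}}F$ isomorphic to the given object.

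The main obstacle is the coherence bookkeeping hidden in the second step: one must verify that the associativity and unit coherence diagrams for a lax $\scr{O}$-monoidal functor into $(\CSet,\otimes)$ correspond, under the corepresentability isomorphisms, to exactly the coherence axioms imposed in the definition of $\scr{O}\sf{CFib}_{\scr{I}}$ — neither stronger nor weaker. Concretely, this reduces to checking that the associators for $\otimes$ and the lax structure of $U$ are compatible with operadic composition in $\scr{O}$, so that applying the corepresentability bijection commutes with composing operations. This is where the explicit description of $\otimes$ as the object corepresenting biconvex maps, rather than any abstract property, is essential.
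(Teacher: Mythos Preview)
Your proposal is correct and follows essentially the same strategy as the paper: combine the na\"ive convex Grothendieck construction (Corollary~\ref{cor:naiveGC}) with the $\Set$-valued $\scr{O}$-monoidal Grothendieck construction of \cite{OMonGroth}, and use the universal property of the convex tensor product (Proposition~\ref{prop:univ_prop_TP}) to identify lax $\otimes$-structure maps with $n$-convex $\times$-structure maps.

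The one organizational difference worth noting is how the paper handles the coherence bookkeeping you flag as the main obstacle. Rather than checking full faithfulness and essential surjectivity directly, the paper first proves an intermediate equivalence (Lemma~\ref{lem:Half-convex_GC}) between a ``half-convex'' functor category $\on{HCFun}^{\scr{O},\on{lax}}((\scr{I},\odot),(\CSet,\times))$ and a category $\scr{O}\sf{FCFib}_{\scr{I}}$ of fibrewise convex $\scr{O}$-monoidal fibrations, by recognizing both as \emph{homotopy pullbacks} of the three already-established Grothendieck equivalences. Since equivalences are stable under homotopy pullback, this step requires no coherence verification at all. The theorem then reduces to matching up full subcategories on each side via Proposition~\ref{prop:univ_prop_TP}, which is immediate. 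This packaging neatly absorbs precisely the associativity and unit coherences you anticipated having to check by hand.
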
 
	
	Unwinding the notation, this says that lax $\O$-monoidal functors from $(\scr{I},\odot)$ to $(\CSet,\otimes)$ are the same thing as fibrewise convex discrete fibrations over $(\scr{I},\odot)$ which are \emph{also} strict $\O$-monoidal functors.

	Owing to the generality and abstraction necessary to state and prove our Grothendieck construction, it may seem that our design in this paper is to lure the reader in with tangible and concrete computations, and then suddenly bash them over the head with abstract categorical machinery. In fact, nothing could be further from the truth. We hope that by the end of the present paper, the reader will see that the categorical machinery is itself, in its application to convexity, concrete, tangible, and comprehensible.
	
	\subsection{Structure of the paper}
	
	In section \ref{sec:Csets}, we briefly review the monadic definition of $R$-convex sets and the category $\CSet_R$ of $R$-convex sets. We then define convex relations, and characterize colimits in $\CSet_R$ in terms of them. Finally, we define the convex tensor product, and establish many of its properties. In Section \ref{sec:convprop}, we introduce the convexity PROP and study its structure and algebras. In section \ref{sec:GC}, we establish the basic definitions and properties of $\scr{O}$-monoidal categories. We then state and prove the two convex Grothendieck constructions described above. The final section, section \ref{sec:EGs} describes applications of the convex Grothendieck construction to entropy and quantum contextuality. 
	
	\subsection{Acknowledgements}
	This work is supported by the US Air Force Office of Scientific Research under award
	number FA9550-21-1-0002. The second author acknowledges support from the Digital Horizon Europe project FoQaCiA, GA no. 101070558.
	
\section{Convex sets}\label{sec:Csets}

Before we study convex objects in greater generality, we first recall the existing theory of convex sets, and introduce some explicit constructions to allow us to better work with convex sets categorically. 

\subsection{The convexity monad and first properties}

The classic categorical approach to convex sets, as exemplified by \cite{Swiriszcz,fritz2015convex,jacobs2009duality}, is to consider sets structured over a monad of distributions. While we will eventually move towards encoding convexity in term of algebras over PROPs and operads, we begin from this prior approach. 

\begin{defn}
	By a \emph{semiring} we will always mean a commutative semiring, that is, a set $R$ together with unital, commutative monoid structures $+$ and $\cdot$ (with units $0$ and $1$ respectively) such that $\cdot$ distributes over $+$, and $0\cdot r=0$ for any $r\in R$. 
	
	We will call a semiring $R$ a \emph{semifield} if every element other than $0$ admits a multiplicative inverse. 
\end{defn}

\begin{example}\
	\begin{enumerate}
		\item The prototypical example of a semiring, and the one which will guide us throughout this paper, is the set $\RR_{\geq 0}$ of non-negative real numbers, equipped with the usual addition and multiplication of real numbers. This is, in particular, a semifield. 
		\item The \emph{Boolean semiring} is the set $\{0,1\}$ viewed as truth values, equipped with logical `and' and `or'. 
		\item Any ring is, in particular a semiring.  
	\end{enumerate}
\end{example}

\begin{defn}
	Let $R$ be a semiring. The \emph{convexity monad} (or, as it is called elsewhere in the literature, the \emph{distributions monad}) is the functor 
	\[
	\begin{tikzcd}
		D_R: &[-3em] \Set \arrow[r] & \Set 
	\end{tikzcd}
	\]
	which sends $X$ to the set 
	\[
	D_R(X):=\left\lbrace p:X\to R \middle| p \text{ fin. supp. }, \sum_{x\in X} p(x)=1\right\rbrace. 
	\]
	and a function $f:X\to Y$ to the map $f_\ast$  which sends $p$ to 
	\[
		f_\ast(p)(y)=\sum_{x\in f^{-1}(y)}p(x).
	\]
\end{defn}

To properly consider $D_R$ as a monad, we must define structure maps displaying it as an algebra in $\Fun(\Set,\Set)$. The first of these is the multiplication 
\[
\begin{tikzcd}
	\mu_X: &[-3em] D_R(D_R(X))\arrow[r] & D_R(X) 
\end{tikzcd}
\]
Which sends a distribution $p$ on $D_R(X)$ to the distribution $\mu_X(p)$ given by 
\[
\mu(p)(x)=\sum_{q\in D_R(X)} p(q)q(x). 
\]
The unit is the transformation with components 
\[
\begin{tikzcd}
	\delta_X: &[-3em] X \arrow[r] & D_R(X)
\end{tikzcd}
\]
defined by $\delta_X(x)(y)=\delta_{x,y}$, where the latter $\delta$ denotes the Dirac delta. 

\begin{rmk}
	The monad $D_R$ appears, under various names, in \cite{fritz2015convex} and \cite{jacobs2009duality}. We mostly follow the notational conventions from \cite{OkaySimplicial} and \cite{KharoofSimplicial}, which hew closely to those of \cite{jacobs2009duality}.
\end{rmk}

\begin{defn}
	An $R$\emph{-convex set} is an algebra over the monad $D_R$. The category of $R$-convex sets will be denote $\CSet_R$. In the special case where $R=\RR_{\geq 0}$, we will simply write $\CSet$ for $\CSet_{\RR_{\geq 0}}$. Similarly, we will simply write $D$ for $D_{\RR_{\geq 0}}$. 
\end{defn}

We will sometimes write a formal convex combination $p\in D_R(X)$ by 
\[
\sum_{x\in \on{Supp}(p)} p(x)\bullet x 
\]
for ease of readability. If $X$ is a convex set with structure map $\pi^X$, we will write 
\[
\sum_{x\in X} p(x)x:=\pi^X\left(\sum_{x\in X} p(x)\bullet x \right)
\]
for the corresponding convex combination in $X$. We will also sometimes allow sums with repeated instances of the same element, so that, for instance, for $x\in X$ and $\alpha_i\in \RR_{\geq 0}$,
\[
\sum_{i} \alpha_i x:= \left(\sum_i \alpha_i\right) x.
\]

\begin{nota}
	For ease of notation, we will write $\Delta^n_R:=D_R(\{0,1,2,\ldots,n\})$, and simply $\Delta^n$ when $R=\RR_{\geq 0}$. 
\end{nota}

The following will be of use in explicit constructions of convex sets as quotients. 

\begin{defn} \label{def : convrel}
	Let $\pi^X:D_R(X)\to X$ be an $R$-convex set. A \emph{convex relation} on $X$ is a relation $\sim$ on $X$ such that, for any $\alpha\in \Delta^n_R$ if $x_i\sim y_i$ for $i\in [n]$, then 
	\[
	\sum_{i\in [n]} \alpha_i x_i \sim \sum_{i\in [n]} \alpha_i y_i. 
	\]
	A \emph{convex equivalence relation} is a convex relation which is also an equivalence relation. 
	
	Given an equivalence relation (not necessarily convex) $\sim$ on $X$, we define an equivalence relation $\sim_D$ on $D_R(X)$ by declaring $p\sim_D q$ if and only if, for every equivalence class $[x]\in X_{/\sim}$, 
	\[
	\sum_{y\in [x]} p(y)=\sum_{y\in [x]} q(y).
	\]
\end{defn}

\begin{lem}
	An equivalence relation $\sim$ on $X$ is a convex equivalence relation if and only if, for any $p\sim_D q$ $\pi^X(p)\sim \pi^X(q)$. 
\end{lem}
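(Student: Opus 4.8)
The plan is to prove the two implications separately, with the harder direction relying on a canonicalization of distributions by a chosen set of class representatives.

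For the backward direction, I assume the stated condition and verify that $\sim$ is a convex relation. Given $\alpha\in\Delta^n_R$ and elements with $x_i\sim y_i$ for $i\in[n]$, I form the two distributions $p=\sum_i \alpha_i\bullet x_i$ and $q=\sum_i \alpha_i\bullet y_i$ in $D_R(X)$. The key computation is that $p\sim_D q$: for each class $[x]$ one has $\sum_{z\in[x]}p(z)=\sum_{i\,:\,x_i\in[x]}\alpha_i$ and likewise for $q$, and since $x_i\sim y_i$ the membership conditions $x_i\in[x]$ and $y_i\in[x]$ coincide, so the two index sets, and hence the two sums, are equal. Applying the hypothesis then yields $\pi^X(p)\sim\pi^X(q)$, which is exactly $\sum_i\alpha_i x_i\sim\sum_i\alpha_i y_i$.

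For the forward direction, I assume $\sim$ is a convex equivalence relation and fix a representative $r_{[x]}$ for each class $[x]\in X_{/\sim}$. The central observation I would establish is that for any $p\in D_R(X)$,
\[
\pi^X(p)\sim \sum_{[x]} w^p_{[x]}\, r_{[x]},\qquad w^p_{[x]}:=\sum_{y\in[x]}p(y),
\]
the sum being finite since $p$ has finite support. To prove it, I enumerate the support of $p$ as $y_1,\dots,y_m$ with weights $\alpha_i=p(y_i)$; since $y_i\sim r_{[y_i]}$, convexity of $\sim$ gives $\sum_i\alpha_i y_i\sim\sum_i\alpha_i r_{[y_i]}$, and regrouping the right-hand combination by class, using the repeated-element convention, identifies it with $\sum_{[x]}w^p_{[x]}r_{[x]}$.

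Finally, given $p\sim_D q$, the defining condition says $w^p_{[x]}=w^q_{[x]}$ for every class, so the two canonical combinations $\sum_{[x]}w^p_{[x]}r_{[x]}$ and $\sum_{[x]}w^q_{[x]}r_{[x]}$ are literally the same element of $X$. Chaining the central observation for $p$ and for $q$ with the symmetry and transitivity of $\sim$ then delivers $\pi^X(p)\sim\pi^X(q)$. The main obstacle is precisely this forward canonicalization step: one must check that regrouping a convex combination by equivalence class is legitimate, and it is exactly here that the hypothesis that $\sim$ is genuinely convex, and not merely an equivalence relation, does all the work.
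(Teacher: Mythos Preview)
Your proof is correct and follows essentially the same strategy as the paper: both directions use a chosen set of class representatives to canonicalize distributions, showing $\pi^X(p)$ is $\sim$-related to the convex combination of representatives weighted by the class masses $w^p_{[x]}$, and then invoking that $p\sim_D q$ forces these canonical combinations to coincide. The backward direction is likewise identical in spirit, building the two distributions from the given tuples and checking directly that they are $\sim_D$-related.
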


\begin{proof}
	If $p\sim_D q$, we can define a new distribution $r$ by choosing a set $\{x_i\}$ of representatives for the equivalence classes of $X_{/\sim}$, and defining 
	\[
	r(z):=\begin{cases}
		\sum_{y\in [z]} p(y) & \exists i\text{ s.t. } z=x_i\\
		0 & \text{else}.
	\end{cases}
	\]
	Then, by construction $p\sim_D r\sim_D q$. 
	
	If $\sim$ is a convex equivalence relation, then 
	\[
	\sum_{x\in X} p(x)x\sim \sum_{[x_i]\in X_{/\sim}}\sum_{y\in [x_i]} p(x)x_i =\sum_{[x_i]\in X_{/\sim}} r(x)x_i =\sum_{x\in X} r(x)x
	\]
	so $\pi^X(p)\sim \pi^X(r)$, and similarly for $q$. Thus, $\pi^X(p)\sim \pi^X(q)$. 
	
	On the other hand, suppose $p\sim_D q$ implies that $\pi^X(p)\sim \pi^X(q)$. Let $\alpha\in \Delta^n_R$ and $x_i\sim y_i$. Define 
	\[
	p(x)= \sum_{j\; :\;x_j=x} p(x_j)\quad \text{and} \quad q(x)= \sum_{j\; :\;y_j=x} p(y_j)
	\]
	Then, by definition
	\[
	\pi^X(p)= \sum_{i} \alpha_i x_i\quad \text{and} \quad \pi^X(q)= \sum_{i} \alpha_i y_i
	\]
	and $p\sim_D q$. Thus,  
	\[
	\sum_{i} \alpha_i x_i \sim \sum_{i} \alpha_i y_i
	\]
	as desired.
\end{proof}

\begin{lem}
	Let $\sim$ be a convex relation. Then the equivalence relation generated by $\sim$ is a convex equivalence relation. 
\end{lem}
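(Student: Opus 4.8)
The plan is to work with the explicit zig-zag description of the generated equivalence relation and to reduce convexity to a single elementary move. Write $\approx$ for the equivalence relation generated by $\sim$, so that $x \approx y$ exactly when there is a finite chain $x = z_0, z_1, \dots, z_k = y$ with, at each stage, $z_t \sim z_{t+1}$ or $z_{t+1} \sim z_t$ (reflexive links absorbed). Fix $\alpha \in \Delta^n_R$ and a pair of families with $x_i \approx y_i$ for all $i$; the goal is to show $\sum_i \alpha_i x_i \approx \sum_i \alpha_i y_i$.

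First I would set up the reduction. Choosing for each $i$ a zig-zag from $x_i$ to $y_i$, I can pass from the tuple $(x_i)_i$ to $(y_i)_i$ by a finite sequence of \emph{elementary moves}, each of which changes a single coordinate by a single link of its chain while leaving the other coordinates fixed. Since $\approx$ is transitive, it then suffices to show that each elementary move relates the associated convex combinations under $\approx$. This isolates the following claim: if $(u_i)_i$ and $(v_i)_i$ agree outside one index $i_0$ and $u_{i_0} \sim v_{i_0}$, then $\sum_i \alpha_i u_i \approx \sum_i \alpha_i v_i$. To prove the claim I would invoke the convexity of $\sim$ for the family that places the pair $(u_{i_0}, v_{i_0})$ in slot $i_0$ and the diagonal pair $(u_j, u_j)$ in every other slot, obtaining $\sum_i \alpha_i u_i \sim \sum_i \alpha_i v_i$ and hence $\approx$; a reversed link $v_{i_0} \sim u_{i_0}$ gives the $\sim$-relation in the opposite order, which still yields $\approx$ because $\approx$ is symmetric.

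The hard part is the treatment of the coordinates held fixed during an elementary move: applying the convexity of $\sim$ requires each fixed $u_j$ to be related to itself, i.e. the diagonal pairs $(u_j, u_j)$ must be available. Securing this reflexivity on the constant coordinates is the crux of the argument, and it is exactly what blocks the naive strategy of closing $\sim$ up under symmetry and transitivity \emph{separately}: across different coordinates the chain links may point in opposite directions, so no single application of convexity can move all coordinates simultaneously. Processing one coordinate and one link at a time, and invoking the symmetry and transitivity of $\approx$ only to recombine the pieces at the end, is what circumvents this difficulty; as a sanity check one could also verify the resulting relation against the criterion of the previous lemma, namely that $p \sim_D q$ implies $\pi^X(p) \approx \pi^X(q)$.
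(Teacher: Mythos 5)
Your proposal reorganizes the paper's own argument---zig-zags plus stepwise application of convexity---in a cleaner way: the paper pads all zig-zags to a common length ``by including identities'' and moves every coordinate simultaneously at each step, whereas you move one coordinate by one link at a time, which correctly disposes of the problem that links in different coordinates may point in opposite directions. However, the issue you yourself single out as the crux---the availability of the diagonal pairs $(u_j,u_j)$ for the coordinates held fixed---is not resolved by this reorganization, and your claim that one-at-a-time processing ``circumvents this difficulty'' is exactly where the proof breaks. The elementary move still invokes the convexity of $\sim$ for a family containing the pairs $(u_j,u_j)$, i.e.\ it needs $u_j\sim u_j$; but a convex relation in the sense of Definition \ref{def : convrel} need not be reflexive, and the convexity condition constrains only families in which \emph{every} pair lies in $\sim$, so it says nothing about mixtures of genuine $\sim$-pairs with diagonal pairs. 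Nor can the diagonal be adjoined for free: the reflexive closure of a convex relation need not be convex.

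The gap cannot be repaired within the lemma's stated hypotheses, because the statement itself fails without reflexivity. Take $X=\Delta^1\cong[0,1]$ and $\sim\;=\{(\delta_0,\delta_1)\}$. This is a convex relation: any family of $\sim$-related pairs is constantly equal to $(\delta_0,\delta_1)$, and convex combinations of copies of $\delta_0$ (resp.\ $\delta_1$) return $\delta_0$ (resp.\ $\delta_1$). The equivalence relation $\approx$ it generates consists of the diagonal together with $\{(\delta_0,\delta_1),(\delta_1,\delta_0)\}$, and it is not convex: from $\delta_0\approx\delta_1$ and $\delta_0\approx\delta_0$, convexity would force $\delta_0=\tfrac12\delta_0+\tfrac12\delta_0\approx\tfrac12\delta_1+\tfrac12\delta_0$, which is false---and note that this failed instance is precisely your elementary move with one coordinate held fixed. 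So the lemma is true (and your argument becomes complete, essentially verbatim) only if $\sim$ is additionally assumed reflexive; in fairness, the paper's own proof sketch suffers from the same defect, since its ``identities'' are likewise fed into the convexity of $\sim$. For the downstream use in Lemma \ref{lem:conv_closure_enough} the repair is harmless: enlarge the auxiliary relation $\sim_C$ defined there by allowing diagonal summands $(z,z)$ alongside the $\sim$-pairs in the defining convex combinations; the enlarged relation is reflexive and still convex, and your one-coordinate-at-a-time argument then applies to it without change.
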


\begin{proof}
	Follows from the construction of the equivalence relation generated by $\sim$ as zig-zags. Given $\alpha\in \Delta^n$ and zig-zags from $x_i$ to $y_i$, one can standardize the lengths of these zig-zags by including identities, and then apply the convexity stepwise. 
\end{proof}

\begin{lem}
	Let $\pi^X:D_R(X)\to X$ be an $R$-convex set and $\sim$ a convex equivalence relation on $X$. Write $Q:X\to X_{/\sim}$ for the quotient map. Define
	\[
	\begin{tikzcd}[row sep=0em]
		\pi^\sim: &[-3em] D_R(X_{/\sim}) \arrow[r] & X_{/\sim}\\
		& P \arrow[r,mapsto] & {[\pi^X(\overline{P})] }
	\end{tikzcd}
	\]
	where $\overline{P}\in D_R(X)$ is any distribution with $D_R(Q)(\overline{P})=P$. Then $\pi^\sim$ is well-defined and 
	equips $X_{/\sim}$ with the structure of a convex set. 
\end{lem}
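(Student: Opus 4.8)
The plan is to verify first that $\pi^\sim$ is well-defined, and then that it satisfies the unit and multiplication axioms for the monad $D_R$. The conceptual engine behind everything is the relationship between the pushforward $D_R(Q)$ and the relation $\sim_D$ of Definition \ref{def : convrel}. First I would record that since $Q$ is surjective, so is $D_R(Q)$ (choose a preimage for each class in the support), so a lift $\overline{P}$ of any $P\in D_R(X_{/\sim})$ exists. The key observation is that, unwinding the definition of the pushforward, $D_R(Q)(p)([x])=\sum_{y\in[x]}p(y)$ for every class $[x]$; comparing this with the definition of $\sim_D$ shows that two lifts satisfy $D_R(Q)(\overline{P})=D_R(Q)(\overline{P}')$ precisely when $\overline{P}\sim_D\overline{P}'$. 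Since $\sim$ is a convex equivalence relation, the first lemma of this subsection then gives $\pi^X(\overline{P})\sim\pi^X(\overline{P}')$, so $[\pi^X(\overline{P})]=[\pi^X(\overline{P}')]$ and $\pi^\sim(P)$ is independent of the chosen lift.

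The same observation yields, straight from the definition of $\pi^\sim$, the compatibility identity $\pi^\sim\circ D_R(Q)=Q\circ\pi^X$: for any $p\in D_R(X)$, the distribution $p$ is itself a lift of $D_R(Q)(p)$, so $\pi^\sim(D_R(Q)(p))=[\pi^X(p)]=Q(\pi^X(p))$. This identity, combined with surjectivity of $Q$, $D_R(Q)$, and the iterate $D_R(D_R(Q))$, is all I need to transport the algebra axioms from $X$ to $X_{/\sim}$. For the unit axiom, naturality of $\delta$ shows that $\delta_X(x)$ lifts $\delta_{X_{/\sim}}([x])$, so $\pi^\sim(\delta_{X_{/\sim}}([x]))=[\pi^X(\delta_X(x))]=[x]$ by the unit axiom for $X$.

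For the multiplication axiom $\pi^\sim\circ D_R(\pi^\sim)=\pi^\sim\circ\mu_{X_{/\sim}}$, I would take an arbitrary $\Phi\in D_R(D_R(X_{/\sim}))$ and, using that $D_R$ preserves surjections, lift it along $D_R(D_R(Q))$ to some $\Psi\in D_R(D_R(X))$. Applying naturality of $\mu$ and then the compatibility identity to the right-hand side gives $\pi^\sim(\mu_{X_{/\sim}}(\Phi))=Q(\pi^X(\mu_X(\Psi)))$; applying functoriality of $D_R$ together with the compatibility identity twice to the left-hand side gives $\pi^\sim(D_R(\pi^\sim)(\Phi))=Q(\pi^X(D_R(\pi^X)(\Psi)))$. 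Since $X$ is an algebra, $\pi^X\circ\mu_X=\pi^X\circ D_R(\pi^X)$, so the two expressions coincide.

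I expect the main obstacle to be bookkeeping in the multiplication axiom, specifically remembering to pull $\Phi$ all the way back along the \emph{iterated} surjection $D_R(D_R(Q))$ to a distribution on $X$, where alone the algebra law is available, rather than stopping at $D_R(X_{/\sim})$. The well-definedness step, by contrast, carries the genuine conceptual content and reduces entirely to recognizing the equation $D_R(Q)(\overline{P})=D_R(Q)(\overline{P}')$ as exactly the relation $\overline{P}\sim_D\overline{P}'$, at which point the earlier characterization lemma does the work.
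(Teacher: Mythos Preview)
Your proof is correct and follows essentially the same approach as the paper's. The paper packages the multiplication axiom as a cube diagram whose five faces commute by construction and whose sixth face is forced by surjectivity of $D_R^2(Q)$, whereas you carry out the same argument elementwise by lifting $\Phi$ along $D_R(D_R(Q))$ and invoking naturality of $\mu$ together with the compatibility square $\pi^\sim\circ D_R(Q)=Q\circ\pi^X$; the content is identical.
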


\begin{proof}
	We first show that the map $\pi^\sim$ is well-defined. Suppose $\overline{P}$ and $\widetilde{P}$ are two distributions satisfying $D_R(Q)(\overline{P})=D_R(Q)(\widetilde{P})=P$. This means that for each equivalence class $[x]\in X_{/\sim}$, we have that 
	\[
	\sum_{y\in [x]} \overline{P}(y)= \sum_{y\in [x]} \widetilde{P}(y).
	\]
	So that $\overline{P}\sim_D \widetilde{P}$. Thus $\pi^X(\overline{P})\sim \pi^X(\widetilde{P})$, and so the map is well-defined. 
	
	We first check unitality. Given $[x]\in X_{/\sim}$, $D_R(Q)(\delta_x)=\delta_{[x]}$, and thus 
	\[
	\pi^\sim(\delta_{[x]})=[\pi^X(\delta_x)]=[x]
	\]
	as desired. 
	
	On the other hand, for associativity, consider the diagram
	\[
	\begin{tikzcd}
		& D^2_R(X)\arrow[rrr,"D_R(\pi^X)"]\arrow[ddd,"\mu"] & & & D_R(X)\arrow[ddd,"\pi^X"] \\
		D^2_R(X_{/\sim})\arrow[rrr,"D_R(\pi^\sim)"]\arrow[ddd,"\mu"]\arrow[ur,leftarrow,"D^2_R(Q)"] & & & D_R(X_{/\sim})\arrow[ddd,"\pi^\sim"]\arrow[ur,leftarrow,"D_R(Q)"] & \\
		& & & &\\
		& D_R(X)\arrow[rrr,"\pi^X"] & & & X\\
		D_R(X_{/\sim})\arrow[rrr,"\pi^\sim"']\arrow[ur,leftarrow,"D_R(Q)"] & &  & X_{/\sim}\arrow[ur,leftarrow,"Q"] & 
	\end{tikzcd}
	\]
	every face except possibly the front fact commutes by construction. However, the maps $Q$, $D_R(Q)$ and $D^2_R(Q)$ are all surjective, and so the front face commutes as well. 
\end{proof}

\begin{defn}
	For an $R$-convex set $X$ and a relation $\sim$ on $X$, the \emph{convex closure} of $\sim$ is the minimal convex equivalence relation on $X$ containing $\sim$. 
\end{defn}

\begin{lem}\label{lem:conv_closure_enough}
	Let $X,Y\in \CSet_R$, and let $\sim$ be a relation on $X$. Denote by $\simeq$ the convex closure of $\sim$. Then a convex function $f:X\to Y$ descends to a convex function $f:X_{/\simeq}\to Y$ if and only if $f(x)=f(y)$ for all $x\sim y$.
\end{lem}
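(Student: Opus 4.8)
The plan is to leverage the minimality property built into the definition of the convex closure $\simeq$. The forward implication is immediate: if $f$ descends to a convex map $\overline{f}:X_{/\simeq}\to Y$, meaning $f=\overline{f}\circ Q$ for the quotient map $Q$, then any $x\sim y$ satisfies $x\simeq y$ (since $\simeq$ contains $\sim$), hence $[x]=[y]$ in $X_{/\simeq}$ and so $f(x)=\overline{f}([x])=\overline{f}([y])=f(y)$.

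For the converse, the central idea is to study the \emph{kernel relation} of $f$. Define a relation $\approx$ on $X$ by declaring $x\approx y$ precisely when $f(x)=f(y)$. This is visibly an equivalence relation, and I claim it is in fact a convex equivalence relation. Indeed, given $\alpha\in\Delta^n_R$ and elements with $x_i\approx y_i$ for all $i\in[n]$, the fact that $f$ is a morphism of $D_R$-algebras (so that it commutes with the formation of convex combinations) gives
\[
f\Big(\sum_{i\in[n]} \alpha_i x_i\Big)=\sum_{i\in[n]}\alpha_i f(x_i)=\sum_{i\in[n]}\alpha_i f(y_i)=f\Big(\sum_{i\in[n]}\alpha_i y_i\Big),
\]
so that $\sum_i\alpha_i x_i\approx\sum_i\alpha_i y_i$, as required.

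By the hypothesis of the converse, $x\sim y$ implies $f(x)=f(y)$, i.e. $\sim\,\subseteq\,\approx$. Since $\approx$ is a convex equivalence relation containing $\sim$, and $\simeq$ is by definition the \emph{minimal} such relation, we conclude $\simeq\,\subseteq\,\approx$. In particular $x\simeq y$ forces $f(x)=f(y)$, so the assignment $\overline{f}([x]):=f(x)$ is a well-defined set map fitting into $f=\overline{f}\circ Q$.

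The only remaining point — and the closest thing to a genuine obstacle, though it is essentially routine given the quotient convex structure $\pi^\simeq$ constructed above — is to verify that $\overline{f}$ is itself convex. For $P\in D_R(X_{/\simeq})$ with any lift $\overline{P}$ along $Q$, functoriality of $D_R$ and $\overline{f}\circ Q=f$ give $D_R(\overline{f})(P)=D_R(f)(\overline{P})$; one then compares $\overline{f}(\pi^\simeq(P))=f(\pi^X(\overline{P}))=\pi^Y(D_R(f)(\overline{P}))$ against $\pi^Y(D_R(\overline{f})(P))$, where the middle equality is convexity of $f$. These agree, so $\overline{f}$ commutes with the structure maps and is convex, completing the argument.
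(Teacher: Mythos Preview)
Your proof is correct, and it takes a genuinely different route from the paper's. The paper proceeds by \emph{explicitly constructing} the convex closure: it first defines an auxiliary relation $\sim_C$ (declaring $x\sim_C y$ whenever there exist $\alpha\in\Delta^n_R$ and $x_i\sim y_i$ with $\sum_i\alpha_i x_i=x$ and $\sum_i\alpha_i y_i=y$), observes that $\sim_C$ is convex, and then takes the equivalence relation it generates; this is shown to be $\simeq$, and the lemma follows since a convex $f$ respecting $\sim$ automatically respects $\sim_C$ and hence the equivalence relation it generates. Your argument instead works purely from the \emph{minimality} in the definition of $\simeq$: you note that the kernel relation $\approx$ of a convex map is itself a convex equivalence relation, so $\sim\,\subseteq\,\approx$ forces $\simeq\,\subseteq\,\approx$. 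Your approach is cleaner and more categorical; the paper's approach has the side benefit of producing a concrete description of $\simeq$ (as zig-zags of $\sim_C$-relations), which can be useful for hands-on computations elsewhere.
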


\begin{proof}
	This follows by explicit construction. We denote by $\sim_C$ the relation on $X$ defined by $x\sim_C y$ if and only if there exist $\alpha\in \Delta^n_R$ and $x_i\sim y_i$ in $X$ such that 
	\[
	\sum_i \alpha_i x_i =x \quad \text{and}\quad \sum_i \alpha_i y_i =y.
	\]
	It is immediate that this relation is a convex relation, and a convex function $f:X\to Y$ sends $x\sim_C y$ to $f(x)=f(y)$ if and only if $f(x)=f(y)$ for all $x\sim y$. We can then take the equivalence relation generated by $\sim_C$. Since any convex equivalence relation which contains $\sim$ must contain this convex equivalence relation, it follows that this is $\simeq$. The lemma is then proven, since any $f:X\to Y$ which sends $\sim_C$ to identities must also send $\simeq$ to identities. 
\end{proof}

\begin{cor}
	Let $f,g:X\to Y$ be convex maps between convex sets. The coequalizer of $f$ and $g$ in $\CSet_R$ is the quotient of $Y$ by the convex closure of the relation $f(x)\sim g(x)$. 
\end{cor}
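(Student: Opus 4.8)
The plan is to verify the universal property of the coequalizer directly, reducing the essential content to Lemma \ref{lem:conv_closure_enough}. Write $\sim$ for the relation on $Y$ with $f(x)\sim g(x)$ for every $x\in X$, and $\simeq$ for its convex closure. By the lemma equipping a quotient by a convex equivalence relation with a convex structure, $Y_{/\simeq}$ is a convex set and the quotient map $Q:Y\to Y_{/\simeq}$ is a convex function; these are my candidates for the coequalizer and its projection.

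First I would check that $Q$ coequalizes $f$ and $g$. For any $x\in X$ we have $f(x)\sim g(x)$ by definition, hence $f(x)\simeq g(x)$ since $\simeq$ contains $\sim$, and therefore $Q(f(x))=[f(x)]=[g(x)]=Q(g(x))$. Thus $Q\circ f=Q\circ g$, so $Q$ is a candidate cocone.

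Next I would establish universality. Given any convex set $Z$ and a convex map $h:Y\to Z$ with $h\circ f=h\circ g$, observe that this condition says precisely that $h(f(x))=h(g(x))$ for all $x\in X$, i.e.\ that $h(y)=h(y')$ whenever $y\sim y'$, since the generating pairs of $\sim$ are exactly the $(f(x),g(x))$. By Lemma \ref{lem:conv_closure_enough}, this is exactly the condition under which $h$ descends to a convex map $\overline{h}:Y_{/\simeq}\to Z$ satisfying $\overline{h}\circ Q=h$. As $Q$ is surjective, such an $\overline{h}$ is unique. This verifies the universal property and identifies $Y_{/\simeq}$ as the coequalizer.

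I expect no genuine obstacle beyond bookkeeping: the real work — that the quotient is again a convex set, and that a convex map descends through the convex closure exactly when it is constant on $\sim$-related pairs — has already been carried out in the preceding lemmas. The only point deserving a moment's care is the identification of the coequalizing condition $h\circ f=h\circ g$ with the hypothesis ``$h(y)=h(y')$ for all $y\sim y'$'' of Lemma \ref{lem:conv_closure_enough}, which is immediate once one recalls the description of the generating pairs of $\sim$.
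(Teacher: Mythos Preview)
Your proof is correct and follows exactly the approach implicit in the paper, which states the result as an immediate corollary of Lemma~\ref{lem:conv_closure_enough} without further argument. You have simply spelled out the verification of the universal property that the paper leaves to the reader.
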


\subsection{The join}

Note that since $\CSet_R$ is a category of algebras over a monad on $\Set$, it is complete and cocomplete (\cite{appelgate1969coequalizers}). The purpose of this section is to construct coproducts explicitly. Thoughout, we require $R$ to be a \emph{semifield}, as we will make liberal use of inverses.  

\begin{defn}
	Let $X_1,X_2\in \CSet_R$. The \emph{join} of $X_1$ with $X_2$ is the quotient of $D_R(X_1\amalg X_2)$ by the equivalence relation generated by requiring  
	\[
	\begin{tikzcd}
		p: &[-3em] X_1\amalg X_2 \arrow[r] & {[0,1]} 
	\end{tikzcd}
	\] 
	to be equivalent to the function $\overline{p}$ which sends 
	\[
	\left(\sum_{x\in X_i}\frac{p(x)}{\sum_{x\in X_i} p(x)}x\right)
	\longmapsto 
	\left(\sum_{x\in X_i} p(x)\right) 
	\]
	for $i\in \{1,2\}$ when the latter sum is non-zero, and otherwise takes value $0$. We denote the join of $X_1$ and $X_2$ by $X_1\star X_2$. 
\end{defn}

\begin{rmk}
	We can rewrite the defining equivalence relation to read $p\sim q$ if and only if 
	\[
	\sum_{x\in X_i} p(x)=\sum_{x\in X_i} q(x) 
	\]
	and 
	\[
	\sum_{x\in X_i}\frac{p(x)}{\sum_{x\in X_i} p(x)}x=\sum_{x\in X_i}\frac{q(x)}{\sum_{x\in X_i} q(x)}x
	\]
	for each $i\in \{1,2\}$ such that the corresponding sum of probabilities is non-zero, and such that if 
	\[
	\sum_{x\in X_i} p(x)=0
	\]
	then so also must
	\[
	\sum_{x\in X_i} q(x)=0.
	\]
	The terminology  ``join" is motivated by this observation, since, in the $\RR_{\geq 0}$-coefficient case, it tells us we can uniquely identify elements of $X_1\star X_2$ with points on line segments between points of $X$ and points of $Y$. More explicitly, we identify elements of $X\star Y$ with equivalence classes
	\[
	[\alpha,x,y]\in X\coprod_{X\times Y\times \{0\}} X\times Y\times \Delta^1_R \coprod_{X\times Y\times \{1\}} Y
	\]
	by associating such triple with the equivalence class of 
	\[
	p(z)= \begin{cases}
		\alpha_0 & z=x \\
		\alpha_1 & z=y \\
		0 & \text{else}.
	\end{cases}
	\]
\end{rmk}

\begin{lem}
	The join of $X,Y\in \CSet_R$ is a convex set.
\end{lem}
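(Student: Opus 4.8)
The plan is to realize $X \star Y$ as a quotient of the free convex set $D_R(X \amalg Y)$ by a convex equivalence relation, and then to invoke the earlier lemma that the quotient of a convex set by a convex equivalence relation is again a convex set. First I would record that $D_R(X \amalg Y)$ is itself a convex set, being the free $D_R$-algebra on $X \amalg Y$ with structure map the monad multiplication $\mu$. The only fact I need from this is that convex combinations in the free algebra are computed pointwise: for $\alpha \in \Delta^n_R$ and $p_1,\dots,p_n \in D_R(X\amalg Y)$, one has $\left(\sum_k \alpha_k p_k\right)(w) = \sum_k \alpha_k\, p_k(w)$, which is immediate from the formula for $\mu$.

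Next, using the reformulation given in the Remark above, I would treat $X\star Y$ as the quotient of $D_R(X \amalg Y)$ by the relation $\approx$, where $p \approx q$ precisely when, for each $i\in\{1,2\}$, the total masses $m_i(p) := \sum_{x\in X_i} p(x)$ and $m_i(q)$ agree, and, whenever this common mass is nonzero, the normalized barycenters $\sum_{x\in X_i}\tfrac{p(x)}{m_i(p)}x$ and $\sum_{x\in X_i}\tfrac{q(x)}{m_i(q)}x$ agree as elements of $X_i$ (via $\pi^{X_i}$). That $\approx$ is an equivalence relation is immediate, so by the quotient lemma it suffices to show that $\approx$ is convex. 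I would emphasize here that one cannot simply apply the lemma that the equivalence relation generated by a convex relation is convex to the generating relation $p \sim \overline{p}$ of the Definition: that generating relation is \emph{not} convex, since condensing a convex combination of distributions to its barycenters is genuinely different from combining the already-condensed distributions. This is exactly why I work with the explicit relation $\approx$, which is a convex equivalence relation on the nose.

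The heart of the argument, and the step I expect to require the most care, is the verification that $\approx$ is convex. Given $\alpha \in \Delta^n_R$ and pairs $p^{(k)} \approx q^{(k)}$ for $k=1,\dots,n$, I set $P = \sum_k \alpha_k p^{(k)}$ and $Q = \sum_k \alpha_k q^{(k)}$, computed pointwise. For the masses, additivity gives $m_i(P) = \sum_k \alpha_k m_i(p^{(k)}) = \sum_k \alpha_k m_i(q^{(k)}) = m_i(Q)$. For the barycenters, writing $M = m_i(P)$ and assuming $M \neq 0$, I would rearrange the pointwise sum so as to exhibit the barycenter of $P$ on $X_i$ as the convex combination $\sum_k \tfrac{\alpha_k m_i(p^{(k)})}{M}\, b_k$ of the individual barycenters $b_k = \sum_{x\in X_i}\tfrac{p^{(k)}(x)}{m_i(p^{(k)})}x$, taken inside the convex set $X_i$. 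The two key checks are that these coefficients sum to $1$ (which is precisely the mass identity $\sum_k \alpha_k m_i(p^{(k)}) = M$) and that indices with vanishing mass $m_i(p^{(k)})=0$ carry coefficient $0$, so they may be discarded even though their barycenter $b_k$ is undefined.

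Finally, since $p^{(k)} \approx q^{(k)}$ forces $m_i(p^{(k)}) = m_i(q^{(k)})$ for all $k$ and $b_k = b_k'$ for every surviving index (where $b_k'$ is the corresponding barycenter of $q^{(k)}$), the very same convex combination computes the barycenter of $Q$ on $X_i$. Hence $P \approx Q$, so $\approx$ is convex, and applying the quotient lemma concludes that $X\star Y$ is a convex set. The only genuine subtlety to watch throughout is the bookkeeping of the edge cases where some masses $m_i(p^{(k)})$ vanish, which is harmless precisely because the renormalized weights annihilate those terms.
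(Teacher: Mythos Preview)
Your proposal is correct and follows essentially the same route as the paper: you verify that the reformulated equivalence relation from the Remark is a convex equivalence relation on $D_R(X\amalg Y)$ and then invoke the quotient lemma. Your treatment is in fact more careful than the paper's---where the paper writes only ``a similar computation shows'' for the barycenter step, you spell out the key identity exhibiting the barycenter of $P$ on $X_i$ as the convex combination $\sum_k \tfrac{\alpha_k m_i(p^{(k)})}{M}\, b_k$ and explicitly handle the vanishing-mass indices.
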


\begin{proof}
	We show that the equivalence relation $\sim$ defining the join is a convex equivalence relation.  If $\alpha\in \Delta^n_R$ and $p_j\sim q_j$ for all $j\in [n]$, then 
	\[
	\begin{aligned}
		\sum_{x\in X_i} \sum_j \alpha_j p_j(x) & =\sum_j \alpha_j \sum_{x\in X_i}  p_j(x) \\
		&=\sum_j \alpha_j \sum_{x\in X_i}  q_j(x)\\
		&=\sum_{x\in X_i} \sum_j \alpha_j q_j(x).
	\end{aligned}
	\]
	A similar computation shows 
	\[
	\sum_{x\in X_i}\frac{\sum_j \alpha_j p_j(x)}{\sum_{x\in X_i} \sum_j\alpha_j p_j(x)}x=\sum_{x\in X_i}\frac{\sum_j \alpha_j q_j(x)}{\sum_{x\in X_i} \sum_j\alpha_j q_j(x)}x
	\]
	when the denominators are non-zero. Thus,
	\[
	\sum_j\alpha_jp_j\sim \sum_j\alpha_jq_j
	\]
	as desired. 
\end{proof}

\begin{rmk}
	We can compute the structure map $\pi:D(X\star Y)\to X\star Y$ explicitly, using the representation of $X\star Y$ in terms of equivalence classes of triples. In these terms, the structure map is
	\[
	\begin{tikzcd}[row sep=0em]
		\pi:&[-3em] D_R(X\star Y) \arrow[r] & X\star Y \\
		&\sum_j \beta_j {[\alpha_j,x_j,y_j]} \arrow[r,mapsto] & \left[\sum_j\beta_j\alpha_j,\sum_j\beta_jx_j,\sum_j\beta_jy_j\right]
	\end{tikzcd}
	\]
\end{rmk}

\begin{lem}
	The join $X\star Y$ is the coproduct in the category of convex sets. 
\end{lem}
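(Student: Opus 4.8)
The plan is to verify the universal property of the coproduct directly, exhibiting explicit coprojections and a mediating map, and then checking its uniqueness. Write $Q:D_R(X\amalg Y)\to X\star Y$ for the quotient map and $\delta$ for the unit of the monad. I would take the coprojections to be the composites $\iota_X:X\xrightarrow{\delta}D_R(X\amalg Y)\xrightarrow{Q}X\star Y$ and likewise $\iota_Y$; in the triple notation of the preceding remark these send $x\mapsto[(1,0),x,\ast]$ and $y\mapsto[(0,1),\ast,y]$, and a short computation with the explicit structure map shows each is convex.

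Given convex maps $f:X\to Z$ and $g:Y\to Z$, I would first use that $D_R(X\amalg Y)$ is the free convex set on the underlying set $X\amalg Y$: the function $X\amalg Y\to Z$ assembled from the underlying functions of $f$ and $g$ extends uniquely to a convex map $\widetilde{h}:D_R(X\amalg Y)\to Z$, given explicitly by $\widetilde{h}(p)=\sum_{x\in X}p(x)f(x)+\sum_{y\in Y}p(y)g(y)$ (a convex combination in $Z$). The mediating map $h:X\star Y\to Z$ is then obtained by descending $\widetilde{h}$ along $Q$.

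The crux of the argument is this descent, which is the step I expect to be the main (if mild) obstacle, since it is exactly where convexity of $f$ and $g$ becomes essential. Because the lemma showing $X\star Y$ is a convex set identifies the defining equivalence relation as a \emph{convex} equivalence relation, that relation is simultaneously the minimal equivalence relation and a convex equivalence relation containing the generating relation $\sim$, hence coincides with the convex closure of $\sim$; so Lemma \ref{lem:conv_closure_enough} applies and reduces the descent to checking $\widetilde{h}(p)=\widetilde{h}(\overline{p})$ for each generating pair $p\sim\overline{p}$. Writing $a_i=\sum_{x\in X_i}p(x)$ and $\bar{z}_i=\sum_{x\in X_i}\tfrac{p(x)}{a_i}x$ for the normalized combinations in $X$ and $Y$, convexity of $f$ gives $a_1 f(\bar{z}_1)=\sum_{x\in X}p(x)f(x)$ and similarly for $g$, so that $\widetilde{h}(\overline{p})=a_1 f(\bar{z}_1)+a_2 g(\bar{z}_2)=\widetilde{h}(p)$, exactly as required. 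Thus $h$ exists and is convex, and $h\circ\iota_X=f$, $h\circ\iota_Y=g$ hold because $\widetilde{h}\circ\delta$ restricts to the underlying functions of $f$ and $g$.

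Finally, for uniqueness I would use that $Q$ is surjective, so any $h'$ with $h'\iota_X=f$ and $h'\iota_Y=g$ is determined by $h'\circ Q$; but $h'\circ Q$ is a convex map out of the free convex set $D_R(X\amalg Y)$ whose restriction along $\delta$ to $X\amalg Y$ agrees with that of $\widetilde{h}$, so freeness forces $h'\circ Q=\widetilde{h}=h\circ Q$, and surjectivity of $Q$ then yields $h'=h$.
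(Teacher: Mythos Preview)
Your proof is correct. The approach differs from the paper's in execution, though not in overall strategy. The paper works entirely in the explicit triple representation $[\alpha,x,y]$ established in the preceding remark: it defines $i_X(x)=[1,x,y]$, $i_Y(y)=[0,x,y]$, writes down the mediating map $(f\star g)([\alpha,x,y])=\alpha f(x)+(1-\alpha)g(y)$ directly on representatives, and asserts (without further detail) that $(f,g)\mapsto f\star g$ and $h\mapsto(h\circ i_X,h\circ i_Y)$ are mutually inverse. Your argument instead stays with the original presentation of $X\star Y$ as a quotient of the free convex set $D_R(X\amalg Y)$: you obtain $\widetilde{h}$ by freeness, then descend it using Lemma~\ref{lem:conv_closure_enough}, having observed that the defining equivalence relation, being already convex (by the preceding lemma) and minimal among equivalence relations containing the generating relation, coincides with the convex closure of that generating relation. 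This buys you a cleaner justification of well-definedness and convexity of the mediating map---points the paper leaves implicit---and makes transparent exactly where convexity of $f$ and $g$ enters. The paper's version is terser and perhaps more geometrically suggestive, but your route actually exercises the machinery the paper built up in the earlier lemmas.
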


\begin{proof}
	We note that there are canonical convex inclusions $i_X:X\to X\star Y$ and $i_Y\to X\star Y$ given by sending, $x\mapsto [1,x,y]$ and $y\mapsto [0,x,y]$. Given convex maps $f:X\to Z$ and $g:Y\to Z$, we can define a convex map $f\star g:X\star Y\to Z$ by $[\alpha,x,y]\mapsto \alpha f(x)+(1-\alpha)g(y)$. The assignment $(f,g)\mapsto f\star g$ is clearly inverse to the assignment $h\mapsto (h\circ i_X,h\circ i_Y)$, and so $X\star Y$ is the coproduct. 
\end{proof}

\begin{cor}
	The join defines a symmetric monoidal structure on $\CSet_R$ with unit $\varnothing$. 
\end{cor}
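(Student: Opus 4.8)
The plan is to recognize the asserted structure as the \emph{cocartesian} symmetric monoidal structure, and to reduce the statement to two observations: that the join is the categorical coproduct (which is exactly the content of the preceding lemma) and that $\varnothing$ is the initial object of $\CSet_R$. Once these are in hand, the symmetric monoidal structure, together with all of its coherence data and axioms, is supplied by the standard fact that any category admitting binary coproducts and an initial object carries a symmetric monoidal structure in which the tensor is the coproduct and the unit is the initial object.

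First I would verify that $\varnothing$, equipped with its unique convex structure, is initial. The key point is that $D_R(\varnothing)=\varnothing$: a finitely supported function $p\colon \varnothing \to R$ would have to satisfy $\sum_{x\in\varnothing} p(x)=1$, but the empty sum is $0\neq 1$, so there are no such distributions. Hence the structure map $D_R(\varnothing)\to\varnothing$ is the unique (empty) map, and for any convex set $X$ there is a unique convex map $\varnothing\to X$, the empty function, which is vacuously convex. Thus $\varnothing$ is the initial object.

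With the previous lemma identifying $X\star Y$ as the coproduct, the associator, the left and right unitors, and the symmetry are all obtained as the unique convex isomorphisms compatible with the canonical inclusions $i_X,i_Y$: for instance, the symmetry $X\star Y\xrightarrow{\sim} Y\star X$ is induced by the pair of inclusions into $Y\star X$ taken in swapped order, and the unitors $\varnothing\star X\xrightarrow{\sim} X$ arise from initiality of $\varnothing$. Each of the coherence diagrams (pentagon, triangle, hexagon, and the involutivity of the symmetry) then commutes automatically, because in every case both composites are maps out of an iterated coproduct that agree on each inclusion, so they coincide by the uniqueness clause of the universal property.

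The proof therefore involves no genuinely hard step; the only point requiring care is the verification that $\varnothing$ is initial, where the empty-sum subtlety $D_R(\varnothing)=\varnothing$ must be observed, together with the routine bookkeeping that every coherence isomorphism is the universal-property-induced map, so that its axiom reduces to uniqueness of maps out of a coproduct. If one wished to avoid invoking the general cocartesian principle, one could instead write each coherence isomorphism explicitly using the triple description $[\alpha,x,y]$ of elements of the join, but this would merely reproduce the universal-property argument in coordinates.
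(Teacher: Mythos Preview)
Your proposal is correct and takes essentially the same approach as the paper: the corollary is stated without proof immediately after the lemma identifying $X\star Y$ as the coproduct, so the intended argument is precisely the cocartesian one you spell out. Your added verification that $D_R(\varnothing)=\varnothing$ and hence $\varnothing$ is initial is the only point the paper leaves implicit, and you have handled it correctly.
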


\begin{rmk}
	In general, given an indexing set $J$ and convex sets $\{X_j\}_{j\in I}$, the join $\bigstar_{j\in J} X_j$ can be defined to be the set of equivalence classes in 
	\[
	D_R(J)\times \prod_{j\in J}X_j 
	\] 
	under the relation that $(p,(x_j))\sim (p,(y_j))$ when $x_j=y_j$ for all $j\in J$ with $p(j)\neq 0$. 
\end{rmk}

\subsection{The convex tensor product}

The definition of the tensor product is motivated by the natural inner hom of convex sets. We remain in the setting where $R$ is a semifield. 

\begin{defn}
	Given $X,Y\in \CSet_R$, we define a convex set $Y^X$ with underlying set $\CSet_R(X,Y)$ and convex structure defined by 
	\[
	\pi(p)(x) =\sum_{f\in \on{Supp}(p)} p(f) f(x). 
	\] 
\end{defn}

When we try to construct a convex version of the usual tensor-hom adjunction, we would want an isomorphism of the form
\[
\begin{tikzcd}[row sep=0em]
	\CSet_R(X\times Y,Z) \arrow[r,phantom,"?"{above=1em}]\arrow[r,phantom,"\cong"{description}] & \CSet_R(Y,Z^X)\\
	f \arrow[r,mapsto] & (y \mapsto f(-,y)) \\
	((x,y)\mapsto g_y(x)) & (y\mapsto g_y).\arrow[l,mapsto]
\end{tikzcd}
\]
Unfortunately, these maps do not define such a bijection. We notice that $f\in \Set(X\times Y, Z)$ corresponds to some $g\in \CSet_R(Y,Z^X)$ if and only if it satisfies the condition 
\[
f(\sum_i\alpha_i x_i,\sum_j \beta_j y_j)=\sum_{i,j} \alpha_i\beta_j f(x_i,y_j).
\]
This condition looks, superficially, analogous to bilinearity, and so we aim to construct a tensor product of convex sets corresponding to this property. 

\begin{defn}
	For $n\in \NN$, denote by $\underline{n}$ the set $\{1,\ldots,n\}$. Let $X_i\in \CSet_R$ for $i\in \underline{n}$, and let $Z\in \CSet_R$. We call a map 
	\[
	\begin{tikzcd}
		f:&[-3em] \prod_{i\in \underline{n}} X_i \arrow[r] & Z 
	\end{tikzcd}
	\] 
	\emph{$n$-convex} if, for every $i\in \underline{n}$, every $\alpha\in \Delta^k_R$, every $(k+1)$-tuple $x_j\in X_i^{(k+1)}$ 
	we have 
	\[
	f\left(y_1,\ldots,y_{i-1},\sum_{j} \alpha_jx_j,y_{i+1},\ldots, y_n\right)=\sum_{j} \alpha_j f\left(y_1,\ldots,y_{i-1},x_j,y_{i+1},\ldots, y_n\right)
	\]
	In particular, we call $f:X\times Y \to Z$ ($R$-)\emph{biconvex} if
	\[
	f\left(\sum_i\alpha_i x_i,\sum_j \beta_j y_j\right)=\sum_{i,j} \alpha_i\beta_j f(x_i,y_j)
	\]
	for any convex combinations $\sum_i\alpha_ix_i$ and $\sum_j \beta_j y_j$. We will denote the set of $n$-convex maps $\prod_i X_i\to Z$ by $\Conv_n((X_i)_{i\in\underline{n}},Z)$.
\end{defn}

\begin{lem}\label{lem:Conv_funct_in_Z}
	The composition of an $n$-convex map $\prod_{i\in \underline{n}}X_i\to Z$ with a convex map $Z\to W$ is $n$-convex, in particular, $\Conv_n$ defines a functor $\on{Conv}_n(\{X_i\}_{i\in \underline{n}},-): \CSet_R \to \Set$.  
\end{lem}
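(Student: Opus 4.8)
The plan is to prove the composition statement directly from the definitions and then deduce functoriality as a formal consequence; there is no genuine difficulty here, the content being entirely an interplay of the two relevant definitions. First I would fix the data that witnesses $n$-convexity in a single slot: an index $i\in\underline{n}$, a convex vector $\alpha\in\Delta^k_R$, elements $x_0,\ldots,x_k\in X_i$, and fixed entries $y_\ell\in X_\ell$ for $\ell\neq i$. Given an $n$-convex map $f:\prod_{i\in\underline{n}}X_i\to Z$ and a convex map $g:Z\to W$, the goal is to check that $g\circ f$ satisfies the defining identity in the $i$-th variable for this arbitrary data.

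The computation proceeds by pushing the convex combination outward in two steps. The first step applies $n$-convexity of $f$ to rewrite $f(y_1,\ldots,\sum_j\alpha_j x_j,\ldots,y_n)$ as the convex combination $\sum_j\alpha_j f(y_1,\ldots,x_j,\ldots,y_n)$ formed in $Z$. The second step invokes the defining property of a convex map: since $g$ is a homomorphism of $D_R$-algebras, it commutes with the structure maps $\pi^Z$ and $\pi^W$, and hence sends any convex combination $\sum_j\alpha_j z_j$ in $Z$ to $\sum_j\alpha_j g(z_j)$ in $W$. Chaining the two steps gives
\[
(g\circ f)\left(y_1,\ldots,\sum_j\alpha_j x_j,\ldots,y_n\right)=\sum_j\alpha_j\,(g\circ f)(y_1,\ldots,x_j,\ldots,y_n),
\]
which is exactly $n$-convexity of $g\circ f$ in the $i$-th slot. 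As $i$ and all the data were arbitrary, this establishes that $g\circ f$ is $n$-convex.

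For the functoriality claim, I would define $\on{Conv}_n(\{X_i\}_{i\in\underline{n}},-)$ on objects by $Z\mapsto\Conv_n((X_i)_{i\in\underline{n}},Z)$ and on a morphism $g:Z\to W$ by post-composition $g_\ast:f\mapsto g\circ f$. The composition statement just proved guarantees that $g\circ f$ again lies in $\Conv_n((X_i)_{i\in\underline{n}},W)$, so the assignment is well-defined on morphisms. Preservation of identities and composition is then immediate from the formalism of post-composition: $(\mathrm{id}_Z)_\ast=\mathrm{id}$, and since $(h\circ g)\circ f=h\circ(g\circ f)$ we have $(h\circ g)_\ast=h_\ast\circ g_\ast$, so $\on{Conv}_n(\{X_i\}_{i\in\underline{n}},-)$ is a (covariant) functor $\CSet_R\to\Set$.

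The only point that merits attention — and it is scarcely an obstacle — is the observation that the convex combinations appearing in the definition of $n$-convexity are genuine convex combinations in the target object, formed via its structure map, so that the algebra-homomorphism property of $g$ applies to them verbatim. In contrast to the constructions of the join and the convex tensor product, no argument about supports, vanishing denominators, or well-definedness of a quotient is required, and the proof is purely a two-line manipulation.
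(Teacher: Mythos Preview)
Your proof is correct and takes essentially the same approach as the paper, which simply records the result as ``immediate from unwinding the definitions.'' You have spelled out that unwinding in full detail, and every step is justified.
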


\begin{proof}
	Immediate from unwinding the definitions.
\end{proof}

\begin{lem}\label{lem:Conv_funct_in_Conv}
	Given a surjective map of sets $\phi:\underline{n}\to \underline{m}$, $\underline{m}$-indexed collection $X_j$ of convex sets, and an $\underline{n}$-indexed collection $Y_i$ of convex sets, every collection 
	\[
	\{\prod_{i\in \phi^{-1}(j)} Y_i\to X_j\}_{j\in \underline{m}}
	\]
	of $|\phi^{-1}(j)|$-convex maps induces a unique natural map  
	\[
	\Conv_m\left((X_j)_{j\in \underline{m}},Z\right)\longrightarrow \Conv_{n} \left((Y_i)_{i\in \underline{n}},Z\right)
	\]
	by composition.
\end{lem}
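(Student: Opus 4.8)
The plan is to define the desired natural transformation by the indicated composition and then to carry out two verifications: that the result is genuinely $n$-convex, and that the assignment is natural in $Z$. Since $\phi$ is surjective, its fibers give a partition of $\underline{n}$ into nonempty parts indexed by $\underline{m}$, yielding a canonical identification
\[
\prod_{i\in \underline{n}} Y_i \;\cong\; \prod_{j\in \underline{m}} \prod_{i\in \phi^{-1}(j)} Y_i.
\]
Writing $h_j:\prod_{i\in\phi^{-1}(j)}Y_i\to X_j$ for the given $|\phi^{-1}(j)|$-convex maps, I would send an $m$-convex map $g\in\Conv_m((X_j)_{j\in\underline{m}},Z)$ to the composite
\[
\prod_{i\in\underline{n}}Y_i \xrightarrow{\;\prod_{j} h_j\;} \prod_{j\in\underline{m}} X_j \xrightarrow{\;g\;} Z.
\]
Being given by this explicit formula, the assignment is manifestly well-defined and unique; the content lies entirely in the two checks below.

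First I would verify that the composite $g\circ(\prod_j h_j)$ is $n$-convex. Fix a coordinate $i\in\underline{n}$ and set $j=\phi(i)$, so that $i$ lies in the single fiber $\phi^{-1}(j)$. Holding all other coordinates fixed and feeding a convex combination $\sum_k\alpha_k y_k$ into the $i$-th slot affects only the factor $h_j$, since every other $h_{j'}$ with $j'\neq j$ depends on coordinates drawn from $\phi^{-1}(j')$, which is disjoint from $\{i\}$. As $h_j$ is $|\phi^{-1}(j)|$-convex it is affine in its $i$-th argument, so its output becomes $\sum_k\alpha_k\, h_j(\ldots,y_k,\ldots)$, a convex combination in the $j$-th coordinate of the input to $g$. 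Since $g$ is $m$-convex it is affine in that coordinate, so the combination passes through $g$. Tracing this through shows the composite is affine in the $i$-th coordinate, and as $i$ was arbitrary the composite is $n$-convex, hence lies in $\Conv_n((Y_i)_{i\in\underline{n}},Z)$.

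Second, naturality in $Z$ reduces to the observation that for a convex map $\psi:Z\to W$ the two routes forming $\psi\circ g\circ(\prod_j h_j)$ agree: post-composition with $\psi$ (the functoriality of Lemma \ref{lem:Conv_funct_in_Z}) commutes with pre-composition by the fixed family $(h_j)_{j\in\underline{m}}$. This is a formal consequence of associativity of composition, so the naturality square commutes on the nose.

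I do not anticipate a substantive obstacle; the only point requiring care is the bookkeeping in the coordinate-wise check of $n$-convexity, namely correctly isolating the unique fiber containing a given index $i$ and confirming that perturbing that coordinate touches exactly one of the maps $h_j$. Once the product decomposition along the fibers of $\phi$ is set up cleanly, the verification is just the successive application of the two convexity hypotheses.
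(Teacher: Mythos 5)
Your proposal is correct and takes essentially the same route as the paper, which disposes of this lemma with ``Immediate from unwinding the definitions'': your precomposition with $\prod_{j} h_j$ along the fiber decomposition of $\phi$, the coordinate-wise check that perturbing the $i$-th slot touches only $h_{\phi(i)}$ and then passes through $g$, and naturality via associativity of composition are exactly that unwinding, spelled out. No gaps.
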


\begin{proof}
	Immediate from unwinding the definitions. 
\end{proof}

\begin{rmk}
	We can rephrase our discussion of the universal property of $Z^X$ by saying that there is a bijection 
	\[
	\Conv_2((X,Y),Z)\cong \CSet_R(Y,Z^X).
	\]
	natural in $X$, $Y$, and $Z$. 
\end{rmk}

\begin{defn}
	Given an $n$-indexed set of convex sets $\{X_i\}_{i\in \underline{n}}$ define the \emph{$n$-indexed convex tensor product} of the $X_i$ to be the quotient of 
	\[
	\bigotimes_{i\in\underline{n}} X_i :=D_R\left(\prod_{i\in\underline{n}} X_i\right)_{/\sim}
	\]
	by the equivalence relation generated by 
	\begin{equation}\label{eq:genTprodReln}
		1\bullet\left(\sum_{j_i} \alpha_{j_i}^i \cdot x_{j_i}^i \right)_{i\in \underline{n}}\sim \sum_{(j_1,\ldots,j_n)} \alpha_{j_1}^1\cdots \alpha_{j_n}^n \bullet (x_{j_1}^1,\ldots, x_{j_n}^n).
	\end{equation}
\end{defn}

\begin{prop}\label{prop:univ_prop_TP}
	The $\underline{n}$-indexed tensor product represents $\Conv_n$. That is, there is a natural isomorphism 
	\[
	\Conv_n((X_i)_{i\in \underline{n}}, Z)\cong \CSet_R\left(\bigotimes_{i\in \underline{n}} X_i ,Z\right).
	\]
\end{prop}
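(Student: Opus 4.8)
The plan is to combine two universal properties: the freeness of the convexity monad and the universal property of the quotient of convex sets. First I would recall that $D_R$ is the free-convex-set functor, so that the free-forgetful adjunction supplies a natural bijection between convex maps $D_R(\prod_i X_i)\to Z$ and ordinary set functions $f\colon \prod_i X_i\to Z$; under this correspondence a function $f$ extends to the convex map $\hat f$ sending a distribution $p$ to the convex combination $\sum_s p(s)f(s)$ computed in $Z$. Writing $q\colon D_R(\prod_i X_i)\to \bigotimes_i X_i$ for the quotient map and $\iota = q\circ\delta$ for the composite of $q$ with the unit $\delta\colon \prod_i X_i\to D_R(\prod_i X_i)$, the bijection I aim to produce sends a convex map $g\colon \bigotimes_i X_i\to Z$ to the set map $g\circ\iota$.

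Next I would identify which functions $f$ arise this way. Since $\bigotimes_i X_i$ is the quotient of $D_R(\prod_i X_i)$ by the convex closure of the generating relation \ref{eq:genTprodReln}, Lemma \ref{lem:conv_closure_enough} tells us that a convex map $\hat f$ descends along $q$ precisely when it identifies the two sides of each generator in \ref{eq:genTprodReln} --- we never have to inspect the full closure. Evaluating $\hat f$ on such a generator, the left-hand Dirac delta is sent to $f(\sum_j\alpha^1_j x^1_j,\ldots,\sum_j\alpha^n_j x^n_j)$, while the right-hand product distribution is sent to $\sum_{(j_1,\ldots,j_n)}\alpha^1_{j_1}\cdots\alpha^n_{j_n}\,f(x^1_{j_1},\ldots,x^n_{j_n})$. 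So $\hat f$ collapses every generator exactly when $f$ satisfies this ``expand all variables simultaneously'' identity.

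The remaining step is to see that this simultaneous identity is equivalent to the separate-variable definition of $n$-convexity. One direction is immediate: taking all but the $i$-th of the distributions $\alpha^{i'}$ to be a single Dirac recovers convexity in the $i$-th variable alone. Conversely, given separate convexity I would expand one variable at a time --- applying $n$-convexity in the first slot, then the second, and so on --- so that after $n$ applications the nested sums collapse into the single multi-index sum above (this is precisely the iteration already used to motivate biconvexity). Assembling the three bijections --- freeness of $D_R$, descent along $q$, and the equivalence of the two convexity conditions --- yields $\Conv_n((X_i)_{i\in\underline{n}},Z)\cong \CSet_R(\bigotimes_i X_i,Z)$, with inverse sending $g$ to $g\circ\iota$.

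Finally, for naturality in $Z$: given a convex map $h\colon Z\to W$, postcomposition sends $g\circ\iota$ to $h\circ g\circ\iota$, which matches the functoriality of $\Conv_n(-)$ in its target recorded in Lemma \ref{lem:Conv_funct_in_Z}, and all three constituent bijections are natural, so the composite is too. I expect the only genuinely delicate point to be the bookkeeping in the second paragraph --- correctly matching the product weights $\alpha^1_{j_1}\cdots\alpha^n_{j_n}$ of \ref{eq:genTprodReln} against the iterated separate-variable expansion --- together with the observation, licensed by Lemma \ref{lem:conv_closure_enough}, that checking the generators suffices; everything else is a formal consequence of the two universal properties.
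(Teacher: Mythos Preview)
Your proposal is correct and follows essentially the same approach as the paper: freeness of $D_R$ gives the bijection $\Set(\prod_i X_i,Z)\cong\CSet_R(D_R(\prod_i X_i),Z)$, then Lemma~\ref{lem:conv_closure_enough} identifies the maps that descend to the quotient as exactly those $f$ satisfying the simultaneous multilinear-style identity coming from~\eqref{eq:genTprodReln}. You are in fact more careful than the paper on one point: the paper simply asserts that this simultaneous identity characterizes the $n$-convex maps, whereas you spell out the equivalence between the simultaneous expansion and the separate-variable definition of $n$-convexity (Dirac specialization in one direction, iterated one-slot expansion in the other), and you also address naturality in $Z$ explicitly.
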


\begin{proof}
	By the universal property of the free convex set, there is a bijective correspondence 
	\[
	\Set\left(\prod_{i\in\underline{n}} X_i,Z \right) \cong \CSet_R\left(D_R\left(\prod_{i\in\underline{n}} X_i\right),Z\right)
	\]
	Which sends a map of sets $f$ to the map $\overline{f}$ given by 
	\[
	\overline{f}\left(\sum_{i} \alpha_i (x^1_i,\ldots, x^n_i) \right)=\sum_i \alpha_i f(x_i^1,\ldots,x_i^n).
	\]
	Moreover, by Lemma \ref{lem:conv_closure_enough}, there is a natural isomorphism 
	\[
	\CSet^{\on{gen}}_R\left(D_R\left(\prod_{i\in\underline{n}} X_i\right),Z\right)\cong \CSet_R\left(\bigotimes_{i\in\underline{n}} X_i,Z\right)
	\]
	where the superscript $\on{gen}$ denotes those maps $\overline{f}$ which send the generating relations of Eq. (\ref{eq:genTprodReln}) to identities. However, under our first natural bijection, such maps correspond precisely to maps of sets $f$ such that 
	\[
	f\left(\left(\sum_{j_i}\alpha_{j_i}^ix_i\right)_{i\in \underline{n}}\right) =\sum_{(j_1,\ldots,j_n)} \alpha_{j_1}^1\cdots \alpha_{j_n}^n f(x_{j_1}^1,\ldots, x_{j_n}^n)
	\]
	that is, precisely the $n$-convex maps. 
\end{proof}

\begin{rmk}
	Unlike the defining relation for the join, the defining relation for the tensor product is rather inexplicit, and thus ill-suited for direct computation. This is, however, entirely analogous to the case of algebraic tensor products, and the same workaround is effective in this case. Every element of $X\otimes Y$ can be expressed as a convex combination of the \emph{pure tensors} $x\otimes y=[(x,y)]$ for $x\in X$ and $y\in Y$. As such, one may formally manipulate convex combinations of pure tensors according to the rule that 
	\[
	\left(\sum_i\alpha_i x_i\right)\otimes \left(\sum_j\beta_j y_j\right)=\sum_{i,j} \alpha_i\beta_j x_i\otimes y_j 
	\]
	to compute effectively. Note, too, that although it may not seem immediate from the definition, Proposition \ref{prop:univ_prop_TP} shows that the tensor product of two convex sets is almost never a singleton. 
\end{rmk}

The characterization of Proposition \ref{prop:univ_prop_TP} has a long list of corollaries.

\begin{cor}\label{cor:univ_map_TP}
	The map  
	\[
	\begin{tikzcd}[row sep=0em]
		U_{(X_i)}:&[-3em] \prod_{i\in\underline{n}} X_i\arrow[r]&  \bigotimes_{i\in\underline{n}} X_i \\
		& (x_1,\ldots,x_n) \arrow[r,mapsto] & x_1\otimes \cdots\otimes x_n 
	\end{tikzcd}
	\]
	is $n$-convex. Restriction along $U_{(X_i)}$ yields the isomorphism of Proposition \ref{prop:univ_prop_TP}. 
\end{cor}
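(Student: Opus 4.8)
The plan is to verify the two claims in turn, both by tracing through the construction of the tensor product as a quotient of the free convex set $D_R\bigl(\prod_{i} X_i\bigr)$. Throughout, I write $Q : D_R\bigl(\prod_i X_i\bigr) \to \bigotimes_i X_i$ for the quotient map and $\delta$ for the unit of the monad, and I record at the outset the key identity $U_{(X_i)} = Q \circ \delta$, which holds because $Q\bigl(\delta_{(x_1,\ldots,x_n)}\bigr) = [(x_1,\ldots,x_n)] = x_1 \otimes \cdots \otimes x_n$.

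For the $n$-convexity of $U_{(X_i)}$, I would specialize the generating relation (\ref{eq:genTprodReln}) to the case in which the convex combination is nontrivial in a single coordinate $i$ and trivial (that is, $1 \bullet y_\ell$) in every other coordinate $\ell \neq i$. The product over tuples then collapses, and the relation reads
\[
1 \bullet \left(y_1, \ldots, \sum_j \alpha_j x_j, \ldots, y_n\right) \sim \sum_j \alpha_j \bullet (y_1, \ldots, x_j, \ldots, y_n)
\]
in $D_R\bigl(\prod_i X_i\bigr)$. The left-hand class is $U_{(X_i)}\bigl(y_1, \ldots, \sum_j \alpha_j x_j, \ldots, y_n\bigr)$ by definition. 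The content of the argument is to identify the right-hand class with the convex combination $\sum_j \alpha_j\, U_{(X_i)}(y_1, \ldots, x_j, \ldots, y_n)$ formed in the convex set $\bigotimes_i X_i$. For this I would invoke the explicit description of the quotient structure map $\pi^\otimes$ from the quotient lemma: it sends a distribution $P$ on $\bigotimes_i X_i$ to $[\mu(\overline{P})]$ for any lift $\overline{P}$ along $D_R(Q)$. Lifting $\sum_j \alpha_j \bullet (y_1 \otimes \cdots \otimes x_j \otimes \cdots \otimes y_n)$ to $\sum_j \alpha_j \bullet \delta_{(y_1, \ldots, x_j, \ldots, y_n)}$ and applying $\mu$ returns exactly the right-hand distribution displayed above. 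Since $\sim$ collapses the two sides, the defining equation of $n$-convexity holds in coordinate $i$; as $i$ was arbitrary, $U_{(X_i)}$ is $n$-convex.

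For the second claim, I would unwind the isomorphism of Proposition \ref{prop:univ_prop_TP}, which is assembled as the composite of the free–forgetful bijection $\Set\bigl(\prod_i X_i, Z\bigr) \cong \CSet_R\bigl(D_R(\prod_i X_i), Z\bigr)$ (precomposition with $\delta$) followed by the descent bijection of Lemma \ref{lem:conv_closure_enough}, which identifies convex maps $\bigotimes_i X_i \to Z$ with those $\overline{f}$ killing the generating relations (precomposition with $Q$). Given a convex map $g : \bigotimes_i X_i \to Z$, descent sends it to $g \circ Q$, and the free–forgetful step then sends that to $(g \circ Q) \circ \delta = g \circ (Q \circ \delta) = g \circ U_{(X_i)}$. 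Hence the isomorphism is precisely restriction along $U_{(X_i)}$.

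The only real subtlety, and the step I expect to be the main obstacle, is the bookkeeping in the first part: correctly interpreting a formal convex combination of pure tensors as the image under $\pi^\otimes$ of a distribution on $\bigotimes_i X_i$, and matching this against the right-hand side of the generating relation. Once the structure map of the quotient is written out explicitly through the monad multiplication $\mu$, both claims reduce to the formal identifications above.
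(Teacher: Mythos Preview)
Your proof is correct. The paper's own argument is a one-liner: ``Unwinding the image of the identity map under the isomorphism of Proposition~\ref{prop:univ_prop_TP}.'' The point is that the natural isomorphism $\Conv_n((X_i),Z)\cong \CSet_R(\bigotimes_i X_i,Z)$ takes the identity of $\bigotimes_i X_i$ to an element of $\Conv_n((X_i),\bigotimes_i X_i)$, namely $Q\circ\delta=U_{(X_i)}$; this simultaneously certifies that $U_{(X_i)}$ is $n$-convex and, by naturality in $Z$, that the isomorphism is precomposition with it.

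Your treatment of the second claim is exactly this unwinding. Where you depart from the paper is in the first claim: rather than reading off $n$-convexity from the fact that $U_{(X_i)}$ lies in the codomain $\Conv_n$ of the already-established isomorphism, you verify it directly by specializing the generating relation~(\ref{eq:genTprodReln}) and computing through the quotient structure map. This is a legitimate and self-contained route, and your bookkeeping with $\mu$ and the lift along $D_R(Q)$ is correct. The paper's approach is terser and gets $n$-convexity for free; yours makes the mechanism visible and does not rely on having already proved the full isomorphism before identifying the universal map.
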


\begin{proof}
	Unwinding the image of the identity map under the isomorphism of Proposition \ref{prop:univ_prop_TP}
\end{proof}

\begin{cor}\label{cor:Set_maps_induce_tp_maps}
	Given a surjective map of sets $\phi:\underline{n}\to \underline{m}$, $\underline{m}$-indexed collection $X_j$ of convex sets, and an $\underline{n}$-indexed collection $Y_i$ of convex sets, every collection 
	\[
	\{f_j:\prod_{i\in \phi^{-1}(j)} Y_i\to X_j\}_{j\in \underline{m}}
	\]
	of $|\phi^{-1}(j)|$-convex maps induces a unique map 
	\[
	\bigotimes_{i\in \underline{n}} Y_i \to \bigotimes_{j\in \underline{m}} X_j
	\]
	such that the diagram 
	\[
	\begin{tikzcd}
		\prod_{i\in \underline{n}} Y_i \arrow[r]\arrow[d,"U_{(Y_i)}"'] & \prod_{j\in \underline{m}} X_j\arrow[d,"U_{(X_j)}"]\\
		\bigotimes_{i\in \underline{n}} Y_i \arrow[r] & \bigotimes_{j\in \underline{m}} X_j
	\end{tikzcd}
	\]
	commutes.
\end{cor}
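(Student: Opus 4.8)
The plan is to assemble the desired map purely formally, combining the functoriality recorded in Lemma \ref{lem:Conv_funct_in_Conv} with the universal property of Proposition \ref{prop:univ_prop_TP}, so that essentially no direct manipulation of the tensor product is needed. First I would apply Lemma \ref{lem:Conv_funct_in_Conv} to the given collection $\{f_j\}_{j\in\underline{m}}$: it produces, for every convex set $Z$, a natural map
\[
\Conv_m\left((X_j)_{j\in\underline{m}},Z\right)\longrightarrow \Conv_n\left((Y_i)_{i\in\underline{n}},Z\right)
\]
given by precomposition with the $f_j$. Feeding this through the two instances of the natural isomorphism of Proposition \ref{prop:univ_prop_TP}, I obtain a natural transformation
\[
\CSet_R\left(\bigotimes_{j\in\underline{m}} X_j,-\right)\longrightarrow \CSet_R\left(\bigotimes_{i\in\underline{n}} Y_i,-\right)
\]
between corepresentable functors on $\CSet_R$.

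By the Yoneda lemma, such a transformation is induced by a unique convex map
\[
g:\bigotimes_{i\in\underline{n}} Y_i \longrightarrow \bigotimes_{j\in\underline{m}} X_j,
\]
namely the image of $\on{id}$ under the component at $\bigotimes_{j} X_j$. This single step yields both the existence and the uniqueness asserted in the corollary; it remains only to check that $g$ makes the square commute. I would verify this by tracing $\on{id}$ through the correspondence: by Corollary \ref{cor:univ_map_TP} it corresponds to the universal $m$-convex map $U_{(X_j)}$, and the transformation from Lemma \ref{lem:Conv_funct_in_Conv} sends $U_{(X_j)}$ to the composite $U_{(X_j)}\circ f$, where $f:\prod_{i\in\underline{n}} Y_i \to \prod_{j\in\underline{m}} X_j$ is the top horizontal arrow of the square. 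Under the identification $\prod_{i\in\underline{n}} Y_i\cong \prod_{j\in\underline{m}}\prod_{i\in\phi^{-1}(j)} Y_i$ afforded by the surjection $\phi$, this $f$ is exactly the product of the $f_j$. Applying Proposition \ref{prop:univ_prop_TP} on the $Y_i$ side identifies this composite with $g\circ U_{(Y_i)}$, so that $g\circ U_{(Y_i)} = U_{(X_j)}\circ f$, which is precisely commutativity of the square.

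The only point requiring genuine care is the bookkeeping across the Yoneda correspondence: one must track the contravariance, so that the precomposition direction of Lemma \ref{lem:Conv_funct_in_Conv} produces a map of tensor products in the stated direction $\bigotimes_i Y_i \to \bigotimes_j X_j$, and one must correctly match the top arrow of the square with the product map under the decomposition of $\underline{n}$ into the fibres of $\phi$. I do not expect any deeper obstacle, since the one substantive fact — that the relevant composite is $n$-convex — is already supplied by Lemma \ref{lem:Conv_funct_in_Conv}.
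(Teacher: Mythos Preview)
Your proposal is correct and follows exactly the approach the paper indicates: combining Lemma~\ref{lem:Conv_funct_in_Conv} with the universal property of the tensor product (Proposition~\ref{prop:univ_prop_TP}/Corollary~\ref{cor:univ_map_TP}) via Yoneda. The paper's proof is the one-line instruction ``Combine Lemma~\ref{lem:Conv_funct_in_Conv} with Corollary~\ref{cor:univ_map_TP}'', and you have simply unpacked that instruction carefully, including the verification of the square's commutativity.
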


\begin{proof}
	Combine Lemma \ref{lem:Conv_funct_in_Conv} with Corollary \ref{cor:univ_map_TP}. 
\end{proof}

\begin{cor}
	The assignment 
	\[
	\begin{tikzcd}[row sep=0em]
		-\otimes -: &[-3em] \CSet_R\times \CSet_R \arrow[r] & \CSet_R \\
		& (X,Y) \arrow[r,mapsto] & X\otimes Y
	\end{tikzcd}
	\]
	defines a functor. More generally, the $n$-fold tensor product defines a functor 
	\[
	\begin{tikzcd}
		\CSet^{\times n}_R\arrow[r] & \CSet_R. 
	\end{tikzcd}
	\]
\end{cor}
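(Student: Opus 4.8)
The plan is to read the functoriality off the universal property packaged in Corollary \ref{cor:Set_maps_induce_tp_maps}, letting its uniqueness clause do all of the real work. Since the binary statement is exactly the instance $n=2$ of the $n$-fold statement, it suffices to treat the general case. First I would define the action on morphisms. A morphism in $\CSet^{\times n}_R$ from $(X_i)_{i\in \underline{n}}$ to $(X_i')_{i\in \underline{n}}$ is a tuple of convex maps $(g_i\colon X_i\to X_i')$. Each $g_i$ is precisely a $1$-convex map (since $1$-convexity coincides with convexity), so applying Corollary \ref{cor:Set_maps_induce_tp_maps} with $\phi=\mathrm{id}_{\underline{n}}$, which is surjective, and $f_i=g_i$ produces a unique convex map $\bigotimes_i g_i\colon \bigotimes_i X_i\to \bigotimes_i X_i'$, characterized by the commutativity of the square relating it to $\prod_i g_i$ through the universal maps $U_{(X_i)}$ and $U_{(X_i')}$ of Corollary \ref{cor:univ_map_TP}.

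Next I would verify preservation of identities. For the tuple $(\mathrm{id}_{X_i})$ we have $\prod_i \mathrm{id}_{X_i}=\mathrm{id}$, so the defining square is already satisfied by $\mathrm{id}_{\bigotimes_i X_i}$; by the uniqueness clause of Corollary \ref{cor:Set_maps_induce_tp_maps}, the induced map $\bigotimes_i \mathrm{id}_{X_i}$ must equal $\mathrm{id}_{\bigotimes_i X_i}$. For composition, given $(g_i\colon X_i\to X_i')$ and $(g_i'\colon X_i'\to X_i'')$, I would paste the two defining squares vertically. The outer rectangle exhibits $(\bigotimes_i g_i')\circ(\bigotimes_i g_i)$ as a map making the defining square for the tuple $(g_i'\circ g_i)$ commute, and uniqueness again forces $(\bigotimes_i g_i')\circ(\bigotimes_i g_i)=\bigotimes_i (g_i'\circ g_i)$.

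I do not expect any genuine obstacle here: every step is forced by the uniqueness half of the representability established in Proposition \ref{prop:univ_prop_TP}. The only point requiring care is confirming that each coordinate map $g_i$ qualifies as a $1$-convex map so that Corollary \ref{cor:Set_maps_induce_tp_maps} applies, which is immediate. As an alternative packaging, one could run the entire argument through the Yoneda lemma: $\bigotimes_i X_i$ represents $\Conv_n((X_i)_{i\in\underline{n}},-)$, this functor is (contravariantly) functorial in each slot by Lemma \ref{lem:Conv_funct_in_Conv}, and representability transports that functoriality to $\bigotimes$, with Yoneda simultaneously supplying the morphism assignment and the two functor axioms.
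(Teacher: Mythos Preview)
Your proof is correct and follows the paper's own approach: the paper simply says the statement follows by restricting Corollary \ref{cor:Set_maps_induce_tp_maps} to tuples of convex maps, which is exactly your application with $\phi=\mathrm{id}_{\underline{n}}$. You have made explicit the verification of the functor axioms via the uniqueness clause, which the paper leaves implicit.
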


\begin{proof}
	The first statement follows by restricting Corollary \ref{cor:Set_maps_induce_tp_maps} to pairs of convex maps, the second by restricting to tuples.
\end{proof}
%
%

\begin{defn}
	Denote a chosen singleton set by $\mathbf{1}$, and abusively use the same notation for the unique convex set whose underlying set is $\mathbf{1}$. We fix notation for the coherences of the cartesian monoidal structure on $\Set$. 
	\begin{itemize}
		\item For sets $A,B,C\in \Set$, we write
		\[
		\begin{tikzcd}
			\alpha_{A,B,C}: &[-3em] A\times (B\times C) \arrow[r,"\cong"] & (A\times B)\times C 
		\end{tikzcd}
		\]
		for the associators (rebracketing isomorphisms). 
		\item For $A\in \Set$, we write 
		\[
		\begin{tikzcd}
			\lambda_A: &[-3em] \mathbf{1}\times A \arrow[r,"\cong"] & A 
		\end{tikzcd}
		\]
		and 
		\[
		\begin{tikzcd}
			\rho_A: &[-3em] A\times \mathbf{1} \arrow[r,"\cong"] & A 
		\end{tikzcd}
		\]
		for the left and right unitors. 
		\item For $A,B\in \Set$, we write 
		\[
		\begin{tikzcd}
			\sigma_{A,B}: &[-3em] A\times B\arrow[r,"\cong"] & B\times A
		\end{tikzcd}
		\]
		for the braiding. 
	\end{itemize}	
	By Corollary \ref{cor:Set_maps_induce_tp_maps}, these morphisms induce \emph{unique} maps $a_{A,B,C}$, $\ell_{A}$, $r_{A}$, and $s_{A,B}$ for any $A,B,C\in \CSet_R$ such that the following diagrams commute, where the vertical morphisms are appropriate composites of $U$'s.
	\begin{itemize}
		\item The diagrams 
		\[
		\begin{tikzcd}
			A\times(B\times C) \arrow[r,"{\alpha_{A,B,C}}"]\arrow[d] & (A\times B) \times C\arrow[d]\\
			A\otimes(B\otimes C) \arrow[r,"{a_{A,B,C}}"'] & (A\otimes B) \otimes C
		\end{tikzcd}
		\]
		\item The diagrams 
		\[
		\begin{tikzcd}
			\mathbf{1}\times A \arrow[r,"\lambda_{A}"]\arrow[d] & A\arrow[d] \\
			\mathbf{1}\otimes A \arrow[r,"\ell_A"'] & A
		\end{tikzcd} \quad \text{and}\quad 
		\begin{tikzcd}
			A\times \mathbf{1} \arrow[r,"\rho_A"]\arrow[d] & A\arrow[d] \\
			A\otimes \mathbf{1} \arrow[r,"r_A"'] & A 
		\end{tikzcd}
		\]
		\item The diagrams
		\[
		\begin{tikzcd}
			A\times B\arrow[r,"\sigma_{A,B}"]\arrow[d] & B\times A\arrow[d]\\
			A\otimes B\arrow[r,"s_{A,B}"'] & B\otimes A
		\end{tikzcd}
		\]
	\end{itemize}
	We call $a_{A,B,C}$, $\ell_{A}$, $r_A$, and $s_{A,B}$ the associator, left unitor, right unitor, and braiding of $\otimes$, respectively.
\end{defn}

\begin{prop}
	The morphisms $a_{A,B,C}$, $\ell_{A}$, $r_A$, and $s_{A,B}$ are the components of natural isomorphisms. These natural isomorphisms define a symmetric monoidal structure $(\CSet_R,\otimes,\mathbf{1})$. 
\end{prop}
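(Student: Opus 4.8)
The plan is to transport the entire symmetric monoidal structure from $(\Set,\times,\mathbf{1})$ along the universal maps $U$, using the universal property of Proposition \ref{prop:univ_prop_TP} as a rigidity principle. The single observation driving everything is that, by iterating Proposition \ref{prop:univ_prop_TP}, each bracketing of an iterated tensor product---for instance $A\otimes(B\otimes C)$---corepresents the functor $\Conv_n$ of $n$-convex maps once it is equipped with the evident composite of universal maps $A\times B\times C\to A\otimes(B\otimes C)$, and, by Corollary \ref{cor:univ_map_TP}, the corepresenting isomorphism is realized by precomposition with that composite. I would first record this as a lemma; its proof is a routine induction on the bracketing, iterating Proposition \ref{prop:univ_prop_TP}. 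Its upshot, which I will call \emph{rigidity}, is that a convex map out of any nested tensor product is uniquely determined by its composite with the canonical multi-convex map from the cartesian product of the same factors. Every subsequent claim then reduces to the corresponding statement for the cartesian structure on $\Set$, which holds because $(\Set,\times,\mathbf{1})$ is symmetric monoidal.

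Concretely, I would first note that the assignment sending a (multi-)convex map to the induced map of tensor products (Corollary \ref{cor:Set_maps_induce_tp_maps}) respects identities and composition: both follow immediately from the uniqueness clause of that corollary, since the composite of induced maps solves the same universal problem as the map induced by the composite of the underlying set maps. Using this, invertibility of $a_{A,B,C}$, $\ell_A$, $r_A$, and $s_{A,B}$ is immediate: the inverse is the map induced by the inverse set-coherence ($\alpha^{-1}$, $\lambda^{-1}$, $\rho^{-1}$, $\sigma^{-1}$), and the two round-trips are induced by $\alpha\circ\alpha^{-1}=\mathrm{id}$ and $\alpha^{-1}\circ\alpha=\mathrm{id}$ (and likewise for the others), hence equal to the identities of the tensor products by uniqueness.

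For naturality, given convex maps $f\colon A\to A'$, $g\colon B\to B'$, $h\colon C\to C'$, the two composites around the naturality square for $a$ are convex maps $A\otimes(B\otimes C)\to (A'\otimes B')\otimes C'$. By rigidity it suffices to check they agree after precomposition with the universal map $U\colon A\times(B\times C)\to A\otimes(B\otimes C)$; after this precomposition both sides become set maps on the cartesian product, which coincide because $\alpha$ is natural in $\Set$ and $U$ is natural in its arguments (this last being exactly the defining square of the induced map in Corollary \ref{cor:Set_maps_induce_tp_maps}). The same argument, with the appropriate universal maps, handles the naturality of $\ell$, $r$, and $s$.

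Finally, the coherence axioms---the pentagon for $a$, the triangle relating $a$, $\ell$, and $r$, the two hexagons relating $a$ and $s$, and the symmetry relation $s_{B,A}\circ s_{A,B}=\mathrm{id}$---are each an equality of two convex maps between suitably bracketed tensor products. By rigidity each reduces, upon precomposition with the appropriate composite of universal maps, to the identical coherence diagram for the cartesian monoidal structure on $\Set$, all of which hold. I expect the only genuine work, and hence the main obstacle, to be the bookkeeping in the rigidity lemma: verifying that each nested bracketing really does corepresent $\Conv_n$ and that the several composites of $U$'s appearing as the vertical maps in the coherence diagrams are precisely the canonical universal maps, so that precomposition with them is jointly monic on convex maps. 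Once this is pinned down, the proposition follows by pure transport of structure, with no further computation in $\CSet_R$ itself.
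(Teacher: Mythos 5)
Your proposal is correct and follows essentially the same route as the paper: both transport the coherence data from $(\Set,\times,\mathbf{1})$ and use the uniqueness clause of Corollary \ref{cor:Set_maps_induce_tp_maps} to deduce naturality, invertibility, and the coherence axioms. Your explicit ``rigidity'' lemma (that iterated bracketed tensor products corepresent $\Conv_n$ via composites of the $U$'s) is exactly the bookkeeping the paper leaves implicit when it invokes uniqueness for maps out of nested tensor products.
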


\begin{proof}
	Each of the morphisms is uniquely induced by the corresponding morphism in $\Set$, as are the morphisms giving the functoriality of the tensor products (e.g. $f\otimes(g\otimes h)$). Thus the naturality diagrams for the morphisms in $\Set$, together with the uniqueness guaranteed by Corollary \ref{cor:Set_maps_induce_tp_maps}, show that each of the transformations is natural. Identical arguments, applied to the inverse transformations of the structure maps in $\Set$ show that each transformation is a natural isomorphism. 
	
	Finally, given a diagram in $(\CSet_R,\otimes, \mathbf{1})$ comprised of associators, unitors, and braidings, we can lift it to a corresponding diagram of associators, unitors, and braidings in $(\Set,\times,\mathbf{1})$. Since, again, Corollary \ref{cor:Set_maps_induce_tp_maps} guarantees uniqueness of induced maps, the fact that these diagrams commute in $\Set$ means that the original diagram in $\CSet_R$ commutes. Thus $(\CSet_R,\otimes,\mathbf{1})$ is a symmetric monoidal category, as desired.
\end{proof}

\begin{rmk}
	Since the monoidal unit of $(\CSet_R,\otimes,\mathbf{1})$ is terminal, the monoidal structure of the previous proposition is semicartesian. 
\end{rmk}	

While much of the theory surrounding the convex tensor product likely feels familiar from the tensor product of vector spaces, there is one important property which contravenes this intuitive connection. 

\begin{lem}
	Let $X$, $Y$, and $Z$ be convex sets, and let $f:X\times Y\to Z$ be a convex map. Then $f$ is biconvex. 
\end{lem}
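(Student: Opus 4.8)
The plan is to exploit the fact that, by definition, a convex map preserves all convex combinations, and to reduce biconvexity to a single such combination. The entire content of the statement is that the pair built from \emph{separate} convex combinations in $X$ and in $Y$ is itself \emph{one} convex combination of the ``grid'' of pairs $(x_i,y_j)$ inside the product convex set $X\times Y$. Once that is established, biconvexity is just the preservation of this one combination under $f$.

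First I would recall how the convex structure on the product $X\times Y$ is computed. Since $\CSet_R$ is the category of algebras over the monad $D_R$, its products are created by the forgetful functor to $\Set$; concretely, the two projections are convex maps and hence the structure map of $X\times Y$ is computed coordinatewise, so that for any distribution $\sum_k \gamma_k(x_k,y_k)$ one has
\[
\sum_k \gamma_k (x_k,y_k) = \left(\sum_k \gamma_k x_k,\ \sum_k \gamma_k y_k\right).
\]

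Next, given convex combinations $\sum_i \alpha_i x_i$ and $\sum_j \beta_j y_j$, I would observe that the products $\alpha_i\beta_j$ form a genuine distribution, since $\sum_{i,j}\alpha_i\beta_j = \left(\sum_i\alpha_i\right)\left(\sum_j\beta_j\right)=1$. Applying the coordinatewise formula to the distribution $\sum_{i,j}\alpha_i\beta_j(x_i,y_j)$ and then collapsing one index in each coordinate — using $\sum_j\beta_j=1$ in the first slot and $\sum_i\alpha_i=1$ in the second — yields the key identity
\[
\left(\sum_i \alpha_i x_i,\ \sum_j \beta_j y_j\right) = \sum_{i,j} \alpha_i\beta_j\,(x_i,y_j)
\]
\emph{within} the convex set $X\times Y$.

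Finally, I would apply $f$ to both sides of this identity and invoke its convexity, i.e.\ its preservation of the single convex combination appearing on the right, to conclude
\[
f\left(\sum_i\alpha_i x_i,\sum_j \beta_j y_j\right) = \sum_{i,j}\alpha_i\beta_j\, f(x_i,y_j),
\]
which is precisely the defining condition of biconvexity. I do not expect a genuine obstacle here: this is the ``easy'' direction, and the only points demanding care are the justification that the product convex structure is coordinatewise and the bookkeeping verifying that $\{\alpha_i\beta_j\}$ is honestly a distribution, so that the manipulation takes place legitimately inside $X\times Y$ rather than merely formally.
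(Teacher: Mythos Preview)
Your proposal is correct and follows essentially the same approach as the paper: both arguments rest on the single identity $\bigl(\sum_i\alpha_i x_i,\sum_j\beta_j y_j\bigr)=\sum_{i,j}\alpha_i\beta_j(x_i,y_j)$ in $X\times Y$, after which convexity of $f$ immediately yields biconvexity. The paper phrases the index-collapse step as rewriting each coordinate using that a constant convex combination of $x$ is $x$, while you phrase it via $\sum_j\beta_j=1$ and $\sum_i\alpha_i=1$, but this is the same computation.
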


\begin{proof}
	We apply $f$ to convex combinations 
	\[
	\sum_{i=1}^n \alpha_i x_i \quad \text{and}\quad \sum_{j=1}^m \beta_j y_j 
	\]
	in $X$ and $Y$ respectively. Since, by definition, any constant convex combination of copies of $x$ is itself equal to $x$, we can rewrite these sums as 
	\[
	\sum_{i=1}^n\sum_{j=1}^m \alpha_i\beta_j x_i \quad \text{and}\quad \sum_{i=1}^n\sum_{j=1}^m \alpha_i\beta_j y_j. 
	\]
	Thus, we can write the pair 
	\[
	\left(\sum_{i=1}^n \alpha_i x_i,\sum_{j=1}^m \beta_j y_j \right)=\sum_{i=1}^n\sum_{j=1}^m \alpha_i\beta_j(x_i,y_j)
	\]
	and so, since $f$ is convex
	\[
	f\left(\sum_{i=1}^n \alpha_i x_i,\sum_{j=1}^m \beta_j y_j \right)=\sum_{i=1}^n\sum_{j=1}^m \alpha_i\beta_jf(x_i,y_j)
	\]
	that is, $f$ is biconvex.
\end{proof}

\begin{rmk}
	Note that not every biconvex map $f:X\times Y\to Z$ is convex. For example, the convex structure on the hom-sets $\CSet(X,Y)$ of the category $\CSet$ makes the composition in $\CSet$ into a biconvex map. To see that it is not convex, consider the sets $X=D_R(\{0,1\})$ and $Y=D_R(\{0,1,2,3\})$, the maps $g_0,g_1:X\to X$, where $g_0$ is the identity, and $g_1$ exchanges the delta distributions $\delta_0$ and $\delta_1$, and the maps $f_0,f_1$ from $X\to Y$, where $f_0(\delta_0)=\delta_0$, $f_0(\delta_1)=\delta_1$, $f_1(\delta_0)=\delta_2$, and $f_1(\delta_1)=\delta_3$. 
	
	We then investigate the image of $\delta_0$ under the map
	\[
	\left(\frac{1}{2}f_0+\frac{1}{2}f_1\right)\circ \left(\frac{1}{2}g_0+\frac{1}{2}g_1\right)
	\]
	Since the composition is biconvex, this is equal to 
	\[
	\frac{1}{4}\delta_0+ \frac{1}{4}\delta_1+\frac{1}{4}\delta_2+\frac{1}{4}\delta_3.  
	\]
	However, if the composition were convex, this would equivalently be 
	\[
	\left(\frac{1}{2}(f_0\circ g_0)+\frac{1}{2}(f_1\circ g_1)\right)(\delta_0)=\frac{1}{2}\delta_1+\frac{1}{2}\delta_3.
	\]
	Since these are unequal, we conclude that the composition map is \emph{not} convex. 
\end{rmk}

Specializing to the case $R=\RR_{\geq 0}$, we find that the convex tensor product allows us to reformulate the notion of convex category found in \cite{KharoofSimplicial}. 

\begin{defn}
	A \emph{$\CSet$-category} is a category enriched in the monoidal category $(\CSet,\otimes,\mathbf{1})$. We denote the category of $\CSet$-categories and $\CSet$-enriched functors by $\CSet\on{-}\Cat$. 
\end{defn}

\begin{defn}
	Let $(\sf{D}_R,\mathbf{\delta},\mathbf{\mu})$ be the monad on $\Cat$ of \cite[Corollary 3.7]{KharoofSimplicial}. Denote the category of algebras over this monad by $\Cat^{\sf{D}}$. 
\end{defn} 

Note that the set of objects of a $\sf{D}$-algebra $\scr{C}$ is an algebra over the identity monad on $\Set$, i.e., a Set. Thus, $\scr{C}$ is a category with no additional structure on the set of objects, which, for every $X,Y\in \on{Ob}(\scr{C})$, has a structure map $\pi^{\scr{C}(X,Y)}:D_R(\scr{C}(X,Y))\to \scr{C}(X,Y)$ which equips $\scr{C}(X,Y)$ with the structure of a convex set. Per  \cite[Proposition 3.12]{KharoofSimplicial}, such a structure on a 1-category defines a $\mathsf{D}$-structure on $\scr{C}$ if and only if the composition maps are biconvex. By the definition of a morphism of $\sf{D}$-algebras, a 1-functor $F:\scr{C}\to \scr{D}$ defines a morphism of $\sf{D}$-algebras if and only if it preserves the convex structure on the hom-sets.

\begin{thm}
	There is an isomorphism of categories between $\CSet\on{-}\Cat$ and $\Cat^{\sf{D}}$.
\end{thm}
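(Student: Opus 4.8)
The plan is to exhibit the isomorphism directly, by constructing mutually inverse functors $\Phi\colon \CSet\on{-}\Cat \to \Cat^{\sf{D}}$ and $\Psi\colon \Cat^{\sf{D}}\to \CSet\on{-}\Cat$. The whole correspondence is powered by Proposition \ref{prop:univ_prop_TP}: a convex map out of the tensor product $\scr{C}(Y,Z)\otimes \scr{C}(X,Y)$ is exactly the same data as a biconvex ($2$-convex) map out of the cartesian product $\scr{C}(Y,Z)\times \scr{C}(X,Y)$. Since in both categories the objects form a bare set — the object-set of a $\sf{D}$-algebra is an algebra over the identity monad, i.e.\ a set, and an enriched category also carries only a set of objects — the object-level data matches on the nose, and all the content lies in translating the hom-data and the axioms.

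For $\Psi$, I start from a $\sf{D}$-algebra $\scr{C}$. By the discussion preceding the theorem together with \cite[Proposition 3.12]{KharoofSimplicial}, this is precisely an ordinary category whose hom-sets $\scr{C}(X,Y)$ carry convex structures $\pi^{\scr{C}(X,Y)}$ and whose composition maps are biconvex. I take the convex sets $(\scr{C}(X,Y),\pi^{\scr{C}(X,Y)})$ as the hom-objects of a $\CSet$-category; by Proposition \ref{prop:univ_prop_TP} the biconvex composition maps correspond to convex maps $\scr{C}(Y,Z)\otimes \scr{C}(X,Y)\to \scr{C}(X,Z)$, and since any map out of the singleton $\mathbf{1}$ is automatically convex, the identities furnish convex unit maps $\mathbf{1}\to \scr{C}(X,X)$. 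For $\Phi$ I reverse this: given a $\CSet$-category, I form the underlying ordinary category with the same objects and with hom-sets the underlying sets of the hom-objects. By Corollary \ref{cor:univ_map_TP}, restriction along the universal $2$-convex map $U$ sends the convex enriched composition to a biconvex map $\scr{C}(Y,Z)\times\scr{C}(X,Y)\to\scr{C}(X,Z)$, which is exactly the set-level composition; hence $\Phi(\scr{C})$ is a $\sf{D}$-algebra by \cite[Proposition 3.12]{KharoofSimplicial}.

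The remaining work, and the main obstacle, is to check that the two families of axioms correspond: that the enriched associativity and unitality diagrams (which are phrased using the associator $a$ and unitors $\ell,r$ of $\otimes$) hold if and only if the underlying composition is associative and unital in the ordinary sense. The efficient route is to exploit the uniqueness clause of Corollary \ref{cor:Set_maps_induce_tp_maps}: both legs of the enriched associativity diagram are convex maps out of the triple tensor product $\scr{C}(Z,W)\otimes\scr{C}(Y,Z)\otimes\scr{C}(X,Y)$, hence are determined by their restrictions along the universal $3$-convex map $U$. After restriction the associator becomes the set-level rebracketing, by the defining square of $a$, so the enriched diagram commutes if and only if the two set-level triple composites agree — that is, if and only if ordinary composition is associative. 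The unit axioms reduce in the same way via the defining squares of $\ell$ and $r$.

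Finally, since $\Phi$ and $\Psi$ manifestly leave the object set and the underlying hom-sets with their convex structures unchanged, and since the composition and unit data correspond bijectively under Proposition \ref{prop:univ_prop_TP}, the two functors are strictly inverse on objects. On morphisms, a $\CSet$-enriched functor is a function on objects together with convex maps on hom-objects that commute with composition and units; under the same bijection this is exactly a functor preserving the convex structure on hom-sets, i.e.\ a morphism of $\sf{D}$-algebras. Thus $\Phi$ and $\Psi$ are mutually inverse isomorphisms of categories, establishing the claim.
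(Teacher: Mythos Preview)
Your proposal is correct and follows essentially the same approach as the paper: both arguments use Proposition \ref{prop:univ_prop_TP} to translate between convex maps out of the tensor product and biconvex maps out of the cartesian product, and both invoke the uniqueness clause (the paper cites Proposition \ref{prop:univ_prop_TP} directly, you cite Corollary \ref{cor:Set_maps_induce_tp_maps}) to transfer the associativity and unitality axioms. The only minor difference is packaging: the paper shows its functor $F$ is fully faithful and then exhibits an inverse on objects, whereas you build both functors $\Phi$ and $\Psi$ in full and verify they are strictly inverse on objects and morphisms; this is a cosmetic variation, not a different argument.
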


\begin{proof} 	
	We define a functor $F:\CSet\on{-}\Cat\to \Cat^{\sf{D}_R}$. Given a $\CSet$-category $\scr{C}$, $F(\scr{C})$ is the category with the same objects and the same hom-sets, equipped with the structure map $\pi^{F(\scr{C})}$ which acts as the identity on objects. Since the composition in $\scr{C}$ is a convex map of the form
	\[
	\begin{tikzcd}
		\scr{C}(X,Y)\otimes \scr{C}(Y,Z)\arrow[r] & \scr{C}(X,Z) 
	\end{tikzcd}
	\]
	it uniquely determines a biconvex map
	\[
	\begin{tikzcd}
		\scr{C}(X,Y)\times \scr{C}(Y,Z)\arrow[r] & \scr{C}(X,Z) 
	\end{tikzcd}
	\]
	by Proposition \ref{prop:univ_prop_TP} which we define to be the composition in $F(\scr{C})$. The unit maps are the same as in $\scr{C}$. Since the associativity and unitality diagrams hold in $\scr{C}$, they also hold (by the uniqueness guaranteed by Proposition \ref{prop:univ_prop_TP}) for the biconvex maps defining composition in $F(\scr{C})$. 
	
	Given a functor $f:\scr{C}\to \scr{D}$, $F(f)$ acts identically to $f$ on both objects and hom-objects. As above, the fact that $f$ preserves the composition in $\scr{C}$ immediately implies that it preserves composition in $F(\scr{C})$ by the bijective correspondence between biconvex maps and maps from the tensor product. By construction, $F$ is fully faithful. 
	
	Moreover, given $(\sf{C},\pi^{\sf{C}})\in \Cat^{\sf{D}_R}$, we can define a $\CSet$-category $G(\sf{C})$ with the same objects and hom-objects, and composition with respect to the tensor product determined by the biconvex composition in $\sf{C}$. Since $F(G(\sf{C}))=\sf{C}$ and $G(F(\scr{C}))=\scr{C}$, this determines a bijection on objects. Thus $F$ is an isomorphism of categories, as desired. 
\end{proof}

\section{PROPs and operads for convexity}\label{sec:convprop}

In this section, we segue from the study of convex sets in terms of monads to the study of convex sets in terms of operads and PROPS. This approach is not original to us, and is prefigured by \cite{fritz2009presentation,fritz2015convex,Leinster_2021}. Of particular import, the PROP $\Conv$ which we define in this section was studied as a Lawvere theory by Fritz in \cite{fritz2009presentation,fritz2015convex} under the name $\sf{FinStoMap}^\op$. Similarly, the underlying operad of $\Conv$ which we denote by $\QConv$ was studied under the notation $\Delta$ in \cite[Ch. 12]{Leinster_2021}.

Our approach will treat the category $\Conv$ slightly more flexibly than these cited sources, in that we treat $\Conv$ as a PROP rather than a Lawvere theory, and study its algebras in a variety of symmetric monoidal categories. However, the key underlying idea is present in the literature --- $\Conv$-algebras in $\Set$ are the same as convex sets. 

\subsection{The $\on{Mat}_R$ and $\on{Conv}_R$ PROPs}

We begin by recalling some background on PROPs and defining the PROPs $\Mat_R$ and $\Conv_R$ which form the backbone of our treatment of abstract convexity.

\begin{defn} \label{def:PROP}
	A \emph{colored PROP}\footnote{This acronym stands for “products and permutations category“.} is a strict symmetric monoidal category $\P$ such that the monoid of objects is free. Let $\V$ be a symmetric monoidal category. A \emph{$\P$-algebra} in $\V$ is a monoidal functor $\P \to \V$ (\cite[Definition 1.2.10]{leinster2004higher})
\end{defn}

One way to interpret the notion of PROP is to say that it encodes abstract operations. We briefly discuss and recall some fairly standard terminology. Let $\P$ be a PROP with monoidal product $\otimes$ and unit $I$. The generators of the monoid of objects $\on{Ob}\P$ are called the \emph{colors}\footnote{This is traditional terminology from operad theory.} of $\P$. Given that $\on{Ob}\P$ is free, each object $\textbf{A} \in \P$ is determined by a sequence of colors $(A_1, \dots , A_m)$. The monoidal unit is represented by the empty sequence. 

We interpret a morphism $f : \textbf{A} \to \textbf{B}$ in $\P$ as an \emph{operation} with inputs the colours in the sequence $\textbf{A}$ and outputs the colors in the sequence $\textbf{B}$. Typically, such an operation is depicted as 

\begin{center}
	\begin{tikzpicture}
		\draw (-1,-0.3) rectangle (1,0.3);
		\path (0,0) node {$f$};
		
		\foreach \x/\lab/\lat in {-2/1/-0.9, -1/2/-0.4, 1/m-1/0.4,2/m/0.9}{
			\path (\x,1.4) node[label=above:$A_{\lab}$]{};
			\draw  (\x,1.4) to[out=-90,in=90] (\lat,0.3);
		}
		\path (0,1.4) node[label=above:$\cdots$] {};
		
		\foreach \x/\lab/\lat in {-2/1/-0.9, -1/2/-0.4, 1/n-1/0.4,2/n/0.9}{
			\path (\x,-1.4) node[label=below:$B_{\lab}$]{};
			\draw  (\x,-1.4) to[out=90,in=-90] (\lat,-0.3);
		}
		\path (0,-1.4) node[label=below:$\cdots$] {};
	\end{tikzpicture}
\end{center}

This way, the composition in $\P$ is depicted as

\begin{center}
	\begin{tikzpicture}
		\begin{scope}
			\draw (-1,-0.3) rectangle (1,0.3);
			\path (0,0) node {$f$};
			
			\foreach \x/\lab/\lat in {-2/1/-0.9, -1/2/-0.4, 1/m-1/0.4,2/m/0.9}{
				\path (\x,1.4) node[label=above:$A_{\lab}$]{};
				\draw  (\x,1.4) to[out=-90,in=90] (\lat,0.3);
			}
			\path (0,1.4) node[label=above:$\cdots$] {};
			
			\foreach \x/\lab/\lat in {-2/1/-0.9, -1/2/-0.4, 1/n-1/0.4,2/n/0.9}{
				\path (\x,-1.4) node[label=below:$B_{\lab}$]{};
				\draw  (\x,-1.4) to[out=90,in=-90] (\lat,-0.3);
			}
			\path (0,-1.4) node[label=below:$\cdots$] {};
		\end{scope}
		\path (0,-2.5) node {$\circ$};
		\begin{scope}[yshift=-5cm]
			\draw (-1,-0.3) rectangle (1,0.3);
			\path (0,0) node {$g$};
			
			\foreach \x/\lab/\lat in {-2/1/-0.9, -1/2/-0.4, 1/n-1/0.4,2/n/0.9}{
				\path (\x,1.4) node[label=above:$B_{\lab}$]{};
				\draw  (\x,1.4) to[out=-90,in=90] (\lat,0.3);
			}
			\path (0,1.4) node[label=above:$\cdots$] {};
			
			\foreach \x/\lab/\lat in {-2/1/-0.9, -1/2/-0.4, 1/k-1/0.4,2/k/0.9}{
				\path (\x,-1.4) node[label=below:$C_{\lab}$]{};
				\draw  (\x,-1.4) to[out=90,in=-90] (\lat,-0.3);
			}
			\path (0,-1.4) node[label=below:$\cdots$] {};
		\end{scope}
		\path (4,-2.5) node {$=$};
		\begin{scope}[yshift=-2.5cm,xshift=7cm]
			\draw (-1,-0.3) rectangle (1,0.3);
			\path (0,0) node {$g\circ f$};
			
			\foreach \x/\lab/\lat in {-2/1/-0.9, -1/2/-0.4, 1/m-1/0.4,2/m/0.9}{
				\path (\x,1.4) node[label=above:$A_{\lab}$]{};
				\draw  (\x,1.4) to[out=-90,in=90] (\lat,0.3);
			}
			\path (0,1.4) node[label=above:$\cdots$] {};
			
			\foreach \x/\lab/\lat in {-2/1/-0.9, -1/2/-0.4, 1/k-1/0.4,2/k/0.9}{
				\path (\x,-1.4) node[label=below:$C_{\lab}$]{};
				\draw  (\x,-1.4) to[out=90,in=-90] (\lat,-0.3);
			}
			\path (0,-1.4) node[label=below:$\cdots$] {};
		\end{scope}
	\end{tikzpicture}
\end{center}

\noindent{}and referred to as \emph{vertical composition} of operations. Similarly, given two operations $f: \textbf{A} \to \textbf{B}$ and $g : \textbf{B} \to \textbf{C}$, the formation of the monoidal product $f \otimes g$ is referred to as the \emph{horizontal composition} of $f$ and $g$ when ``placed next to each other''

\begin{center}
	\begin{tikzpicture}[scale=0.9]
		\begin{scope}
			\draw (-1,-0.3) rectangle (1,0.3);
			\path (0,0) node {$f$};
			
			\foreach \x/\lab/\lat in {-2/n+1/-0.9, -1/n+2/-0.4, 1/n+k-1/0.4,2/m+k/0.9}{
				\path (\x,1.4) node[label=above:$A_{\lab}$]{};
				\draw  (\x,1.4) to[out=-90,in=90] (\lat,0.3);
			}
			\path (0,1.4) node[label=above:$\cdots$] {};
			
			\foreach \x/\lab/\lat in {-2/m+1/-0.9, -1/m+2/-0.4, 1/m+l-1/0.4,2/m+l/0.9}{
				\path (\x,-1.4) node[label=below:$B_{\lab}$]{};
				\draw  (\x,-1.4) to[out=90,in=-90] (\lat,-0.3);
			}
			\path (0,-1.4) node[label=below:$\cdots$] {};
		\end{scope}
		\path (0,-2.5) node {$\circ$};
		\begin{scope}[xshift=-5cm]
			\draw (-1,-0.3) rectangle (1,0.3);
			\path (0,0) node {$g$};
			
			\foreach \x/\lab/\lat in {-2/1/-0.9, -1/2/-0.4, 1/n-1/0.4,2/n/0.9}{
				\path (\x,1.4) node[label=above:$A_{\lab}$]{};
				\draw  (\x,1.4) to[out=-90,in=90] (\lat,0.3);
			}
			\path (0,1.4) node[label=above:$\cdots$] {};
			
			\foreach \x/\lab/\lat in {-2/1/-0.9, -1/2/-0.4, 1/m-1/0.4,2/m/0.9}{
				\path (\x,-1.4) node[label=below:$B_{\lab}$]{};
				\draw  (\x,-1.4) to[out=90,in=-90] (\lat,-0.3);
			}
			\path (0,-1.4) node[label=below:$\cdots$] {};
		\end{scope}
		\path (3.5,0) node {$=$};
		\begin{scope}[xshift=7cm]
			\draw (-1,-0.3) rectangle (1,0.3);
			\path (0,0) node {$g\otimes f$};
			
			\foreach \x/\lab/\lat in {-2/1/-0.9, -1/2/-0.4, 1/m+k-1/0.4,2/m+k/0.9}{
				\path (\x,1.4) node[label=above:$A_{\lab}$]{};
				\draw  (\x,1.4) to[out=-90,in=90] (\lat,0.3);
			}
			\path (0,1.4) node[label=above:$\cdots$] {};
			
			\foreach \x/\lab/\lat in {-2/1/-0.9, -1/2/-0.4, 1/m+l-1/0.4,2/m+l/0.9}{
				\path (\x,-1.4) node[label=below:$C_{\lab}$]{};
				\draw  (\x,-1.4) to[out=90,in=-90] (\lat,-0.3);
			}
			\path (0,-1.4) node[label=below:$\cdots$] {};
		\end{scope}
	\end{tikzpicture}
\end{center}

Lastly, we may think of the symmetries in the monoidal structure of $\P$ as acting on operations by “permuting inputs and outputs". More precisely, let $\Sigma_m$ be the symmetric group in $m$ variables. Given an object $\textbf{A} = (A_1, \dots, A_m)$ of length $m$ and a symmetry $\sigma \in \Sigma_m$, we have a structure isomorphism in $\P$ of the form $S_\sigma : \textbf{A}\sigma \to \textbf{A}$, where $\textbf{A}\sigma = (A_{\sigma(1)}, \dots , A_{\sigma(m)})$. 

This way, given $f : \textbf{A} \to \textbf{B}$, $\sigma \in \Sigma_{|\textbf{A}|}$ and $\tau \in \Sigma_{|\textbf{B}|}$, we may form the operation $\tau f \sigma = S_\tau \circ f \circ S_\sigma : \textbf{A}\sigma \to \tau \textbf{B}$. More precisely, we may say that these symmetries equip the set $\P(\textbf{A}, \textbf{B})$ with the structure of a $\Sigma_\textbf{B}-\Sigma_\textbf{A}$-biset. Diagrammatically, we depict the formation on $\tau f \sigma$ as 

\begin{center}
	\begin{tikzpicture}
		\draw (-1.5,-0.3) rectangle (1.5,0.3);
		\path (0,0) node {$f$};
		\foreach \x/\lab in {-1.2/1,-0.4/2,0.4/3,1.2/4}{
			\path (\x,1.3) node[label=above:$A_{\lab}$]{};
			\draw (\x,1.3) to (\x,0.3);
		}
		\foreach \x/\lab in {-1.2/1,0/2,1.2/3}{
			\path (\x,-1.3) node[label=below:$B_{\lab}$]{};
			\draw (\x,-1.3) to (\x,-0.3);
		}
		\draw (-1.2,3.2) to[out=-90,in=90] (0.4,2.2);
		\draw (-0.4,3.2) to[out=-90,in=90] (1.2,2.2);
		\draw (0.4,3.2) to[out=-90,in=90] (-0.4,2.2);
		\draw (1.2,3.2) to[out=-90,in=90] (-1.2,2.2);
		\path (-2, 2.7) node {$\sigma$};
		\foreach \x/\lab in {-1.2/3,-0.4/4,0.4/2,1.2/1}{
			\path (\x,3.2) node[label=above:$A_{\lab}$]{};
		}
		\draw (-1.2,-2.2) to[out=-90,in=90] (0,-3.2); 
		\draw (0,-2.2) to[out=-90,in=90] (1.2,-3.2); 
		\draw (1.2,-2.2) to[out=-90,in=90] (-1.2,-3.2);
		\foreach \x/\lab in {-1.2/3,0/1,1.2/2}{
			\path (\x,-3.2) node[label=below:$B_{\lab}$]{};
		} 
		\path (2.5,0) node {$=$};
		\path (-2, -2.7) node {$\tau$};
		\begin{scope}[xshift=5cm]
			\draw (-1.5,-0.3) rectangle (1.5,0.3);
			\path (0,0) node {$\tau f\sigma$};
			\foreach \x/\lab in {-1.2/3,-0.4/4,0.4/2,1.2/1}{
				\path (\x,1.3) node[label=above:$A_{\lab}$]{};
				\draw (\x,1.3) to (\x,0.3);
			}
			\foreach \x/\lab in {-1.2/3,0/1,1.2/2}{
				\path (\x,-1.3) node[label=below:$B_{\lab}$]{};
				\draw (\x,-1.3) to (\x,-0.3);
			}
		\end{scope}
	\end{tikzpicture}
\end{center}

It is possible to rewrite the definition of PROP in terms of the above elements by saying that a PROP consists of colours, operations, vertical and horizontal composition and bisymmetries which satisfy some compatibility conditions (\cite{hackney2015category}). 

In this vein, an algebra $\P \to \V$ is simply a concrete realization in $\V$ of the abstract operations recorded in $\P$. Such algebras form a category which we denote $\P (\V)$. The morphisms in this category are monoidal natural transformations ([\textbf{Leinster}]). 

A PROP $\P$ is called \emph{monocolored} in case the set of colours is the singleton set, or equivalently if the monoid of objects of $\P$ as a monoidal category  is the monoid of natural numbers $\bb{N}$. In such case, we say that an object $A \in \V$ has the structure of a $\P$-algebra in case there is a monoidal functor $\P \to \V$ which maps the generator object $1$ to $A$. In this vein, the object $A$ is equipped with extra structure: for each $p \in \P(m,n)$ a map  $\mu_p : A^{\otimes m } \to A^{\otimes n}$ which compose as prescribed by $\P$. This way, a morphism of $\P$-algebras $f : A \to B$ is a morphism in $\V$ which preserves the algebra structure.

\begin{defn} \label{denf:Mat}
	
	There is a monocolored PROP $\Mat_R$ with:
	\begin{itemize}
		\item Set of operations $\Mat(m,n)$ being the set of $n\times m$ matrices with entries in the semiring $R$. We allow for the case $n = 0$. When $n=0$, a $0\times m$ matrix  is interpreted as a unique ``forgetting'' operation, or the linear transformation $R^m\to 0$ to the terminal vector space. 
		\item Composition provided by matrix multiplication.
		
		\item Horizontal composition provided by the direct sum of matrices
		\[
		P \oplus Q = \begin{bmatrix}
			P & 0\\
			0 & Q
		\end{bmatrix}.
		\]
		\item For an $m \times n$ matrix $P$, $\sigma \in \Sigma_m$ and $\tau \in \Sigma_n$, the matrix $\tau P \sigma$ being formed by permuting rows and columns.
	\end{itemize}
	
	In case $R$ is a field, then the vector space $R$ has the structure of a $\Mat$-algebra, provided by the linear transformations $R^m \to R^n$ corresponding to the matrices in $\Mat(m,n)$ (with respect to a chosen basis).
\end{defn}

\begin{example}\label{eg:VSs_As_algebras}
	We consider the most general version of the matrix prop $\sf{Mat}_R$ in which we allow the unique nullary operations in $\sf{Mat}(0,n)$ as well as the unique conullary operations in $\sf{Mat}(n,0)$. In the case where $R$ is a commutative ring, we unpack the category $\sf{Mat}_R(\Set)$ of $\Set$-valued algebras over $\sf{Mat}_R$. 
	
	Given a $R$-module $V$, we can construct an algebra $A_V$ in $\Set$ by defining 
	\[
	A_V(n):=V^{\times n}
	\]
	and, for an operation $M\in \Mat(m,n)$, setting 
	\[
	A_V(M)(v_1,\ldots,v_m):= \left(\sum_{j=1}^m M_{1,j}v_j,\ldots,\sum_{j=1}^m M_{n,j}v_j\right).
	\]
	Where the convention holds that the empty sum and the empty tuple both represent $0$. It is clear that this definition respects matrix multiplication and the direct sum, yielding an algebra.
	
	Any algebra $A:\Mat_R\to \Set$ will, in particular, yield a set $V:=A(1)$ and the following data: 
	\begin{itemize}
		\item For every $n\geq0$, the diagonal map $\delta_n:V\to V^{\times n}$. 
		\item A map $0: \ast \to V$ corresponding to the unique nullary operation. 
		\item For every $a\in k$, a map $\lambda_a: V\to V$ corresponding to the operation $(a)\in \Mat_k(1,1)$. (Note here that unitality means that $\lambda_1$ must be the identity.) 
		\item A binary operation 
		\[
		\begin{tikzcd}
			+ :&[-3em] V\times V\arrow[r] & V
		\end{tikzcd}
		\]
		corresponding to the matrix $(1 \; 1)\in \Mat_R(2,1)$. 
	\end{itemize}
	A little elbow grease shows that every operation in $\Mat_R$ can be built out of these, so that the data listed above actually determine the algebra $A$. Note that since $\Mat_R$ contains the commutative operad, the operation $+$ and the nullary operation $0$ in fact endow $V$ with the structure of a commutative monoid. Moreover, the operations $\lambda_a$ determine an action of $R$ on $V$. We aim to show that this, in fact, endows $V$ with the structure of a $R$-vector space. 
	
	The two distributivity axioms, the multiplicativity of the $R$-action, the identity-preserving nature of the $R$-action, and the fact that $(V,+,0)$ is a commutative monoid all follow immediately from the structure of $\Mat_k$. The one difficulty is showing that the map $\lambda_{-1}$ does, in fact, send an element $v\in V$ to an additive inverse under $+$. Since 
	\[
	\begin{pmatrix}
		1 & -1 
	\end{pmatrix} \begin{pmatrix}
		1\\
		1 
	\end{pmatrix}=\begin{pmatrix}
		0
	\end{pmatrix}
	\]
	we see that, for any $v\in V$
	\[
	v+\lambda_{-1}(v)=\lambda_0(v). 
	\]
	Thus, to see that $V$ is indeed endowed with a vector space structure, it will suffice to see that $\lambda_0$ factors through the unit map $0:\ast\to V$. However, we have that 
	\[
	\begin{pmatrix}
		1 & 1 & 0\\
		0 & 0 & 1 
	\end{pmatrix}
	\begin{pmatrix}
		1 & 0 & 0 \\
		0 & 0 & 0 \\
		0 & 0 & 1 
	\end{pmatrix}\begin{pmatrix}
		1 & 0  \\
		0 & 1 \\
		0 & 1
	\end{pmatrix}= \begin{pmatrix}
		1 & 0 \\
		0& 1 
	\end{pmatrix}
	\] 
	Or, rephrased in terms of the structure maps we have extracted, $\forall v,w\in V$, 
	\[
	(\lambda_1(v)+\lambda_0(w),\lambda_1(w))=(v,w). 
	\]
	Since $\lambda_1$ is the identity, this implies that for any $w\in V$, 
	\[
	v+\lambda_0(w)=v 
	\]
	for all $v\in V$. Thus, the uniqueness of units in a monoid shows that $\lambda_0(w)$ is constant on the monoidal unit $0$, as desired. We thus see that every algebra over $\Mat_R$ has an underlying $R$-module, which we will now denote by $V(A)$. It is easy to check that for a vector space $W$, $V(A_W)=W$, and for an algebra $B$, $A_{V(B)}=B$. It is similarly easy, though tedious, to check that these two constructions are functorial and the above isomorphisms are natural, and thus we have an equivalence of categories $\sf{Mod}_R\cong \Mat_R(\Set)$. In particular, in the case of a field $k$, we obtain an equivalence $\sf{Vect}_k\cong \Mat_k(\Set)$.
\end{example}

\begin{example}
	
	Following on from Example \ref{eg:VSs_As_algebras}, we can make the following observation. We can view the 1-category $\Cat$ of small categories as the full subcategory $\sf{Seg}\subset\Set_\Delta$ on the Segal sets. Since this category is closed under limits, the category $\Mat_k(\Cat)$ is equivalent to the category of simplicial objects in $\Mat_k(\Set)$ whose underlying simplicial sets are Segal. However, since limits in $\Vect_k$ are computed as limits of the underlying diagrams of sets, this means that the category $\Mat_k(\Cat)$ is equivalent to the category of $(\Vect_k,\times)$-internal categories. Such categories have already appeared in the literature, in particular under the name \emph{$2$-vector spaces} in \cite{BaezCranz} where they are used to characterize 2-term $L_\infty$ algebras as $2$-vector spaces equipped with a (categorified) Lie bracket. 
	
\end{example}

\begin{example}[Topological vector spaces]
	The computations in example \ref{eg:VSs_As_algebras} can be easily repurposed to show that, for a field $k$ a  $\Mat_k$-algebra in $\sf{Top}$ is the same thing as a vector space $V$ equipped with a topology such that the monoid structure and individual scalar multiplication operations are continuous. 
	
	Note that this is not quite the same as a topological vector space, in that we do not require that the map $k\times V\to V$ is continuous. However, in the case where $k$ is a topological field, the monoidal category $\Mat_k$ is canonically $\sf{Top}$-enriched, via the identification of $\Mat_k(M,n)$ with $k^{m\times n}$. Letting $\sf{ExpTop}\subset \sf{Top}$ be the full subcategory of exponentiable spaces, we can view $\sf{ExpTop}$ as a $\sf{Top}$-enriched category with a monoidal structure given by the cartesian product. Then enriched monoidal functors $\Mat_k\to \sf{ExpTop}$ are topological vector spaces whose underlying topological space is exponentiable. Note that this means that such enriched monoidal functors cannot capture infinite-dimensional Hausdorff topological vector spaces, since the latter are not locally compact, and thus not exponentiable. 
\end{example}

\begin{rmk}[Related structures]
	
	For any PROP, the system of operations which have as target a single colour forms a colored \textit{operad}. Dually, operations whose source is a single colour form a \textit{cooperad}. By forgetting the horizontal composition of operations we obtain a colored \textit{properad} [\cite[Chapter 3]{hackney2015infinity}]. In case $\P$ is monocolored, we denote the underlying operad $\P(-, 1)$ and the underlying cooperad $\P(1 , -)$. 
	
\end{rmk}

While the PROP $\Mat_R$ is itself of substantial interest, our aim in this work is to explore convexity, and so we specialize to a sub-PROP of $\Mat_R$ more suited to this purpose. 

\begin{defn}[Convex matrix] \label{defn: ConvMat}
	
	We say that a matrix is \emph{convex} if the sum of entries in each of its rows is 1. That is, for $M\in \Mat_R(m,n)$, we say that $M$ is \emph{convex} if, for every $1\leq i\leq n$, the sum 
	\[
	\sum_{j=1}^m M_{i,j}=1. 
	\]
	Note that, for the unique $0\times m$ matrix, this condition is vacuous, so that we still regard the $0\times m$ matrix as convex.    
\end{defn}

\begin{example}[The convexity PROP] \label{expl : Conv}
	
	There is a PROP $\Conv_R$ whose operations are convex matrices. Given that convex matrices are closed under direct sum, $\Conv_R$ can be defined as the subcategory of $\Mat_R$ consisting of convex matrices. 
	
\end{example}

Note that the transposes of convex matrices appear in \cite{fritz2009presentation} under the name \emph{stochastic matrices}, and the PROP $\Conv$ under the name $\sf{FinStoMap}^\op$. 

Let us unpack a little of the structure of $\Conv$. First, observe that for each $n \geq 1$ there is a unique matrix $C_n \in \Conv(1,n)$, the column of height $n$ with all entries being $1$
\[
C_n =  \begin{bmatrix}
	1 \\
	\vdots \\
	1
\end{bmatrix}
\] 
Hence, we have an isomorphism
$$\Conv(1 , -) \cong \Comm^{\op}$$
In other words, the underlying cooperad in $\Conv$ is the commutative cooperad. The latter is the terminal cooperad, i.e. it is monocolored and it has a unique $n$-ary cooperation for each $n$. The algebras of $\Comm$ in a monoidal category $\V$ are commutative monoids in $\V$, while, by duality, algebras in $\V$ of the cooperad $\Comm^\op$ are commutative comonoids in $\V$.

\begin{defn}
	The \emph{operad for quasiconvexity} $\QConv$ is the symmetric endomorphism operad of $1$ in the symmetric monoidal category $\on{Conv}$. More precisely, the is the monochromatic symmetric operad with 
	\[
	\QConv(m)=\left\lbrace\vec{\alpha}\in (\RR_{\geq 0})^m\;\middle|\; \sum_i\alpha_i =1\right\rbrace.
	\]
	A \emph{quasiconvex object} in a symmetric monoidal category $\scr{C}$ is a $\QConv$ algebra in $\scr{C}$. 
\end{defn}

\begin{rmk}
	The operad $\QConv$ is precisely the operad $\Delta$ considered by Leinster in \cite[Ch. 12]{Leinster_2021}.
\end{rmk}

Notice that we have an isomorphism 
$$\Conv(-, 1) \cong \QConv$$
i.e. $\QConv$ is the underlying operad of the PROP $\Conv$. The algebras of $\QConv$ in the category $\Set$ may be regarded as \emph{quasiconvex sets}. The latter term indicates, for some algebra $A \in \QConv(\Set)$, the presence of a convex structure, even though the full-structure of a convex set is not present. More precisely, for each $\vec{\alpha} = (\alpha_1, \dots , \alpha_m) \in \QConv(m)$ we have a structure map
\[
\begin{tikzcd}
    \mu_{\vec{\alpha}} : &[-3em] A^m \arrow[r] & A \\ [-2em]
& (a_1, \dots , a_m) \arrow[r, mapsto] & \sum_i \alpha_i a_i
\end{tikzcd}
\]
which provides convex combinations for the set $A$. However, for an element $a \in A$, we do not have $\sum_i \alpha_i a = a$ by virtue of $A$ being a $\QConv$-algebra.

\subsection{Convex objects}

Having defined the PROP $\Conv_R$, we now connect the theory of $\Conv_R$-algebras to the existing theory of convex sets.

\begin{defn} \label{def : convobj}
	
	Let $\C$ be a category with finite products. An object $A \in \C$ is said to be \emph{$R$-convex} if it has a $\Conv_R$-algebra structure in the cartesian monoidal structure. We denote by $\Conv_R(\C)$ the category of $R$-convex objects.
	
	More tersely, an \emph{$R$-convex object} is a $\Conv_R$-algebra in a Cartesian monoidal structure.
	
\end{defn}

\begin{thm} \label{thm : convset}
	 There is an isomorphism of categories
	$$\CSet \cong \Conv(\Set)$$
\end{thm}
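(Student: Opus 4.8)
The plan is to exhibit an explicit pair of mutually inverse functors $F\colon\CSet\to\Conv(\Set)$ and $G\colon\Conv(\Set)\to\CSet$, yielding an isomorphism rather than a mere equivalence. First I unwind the right-hand side concretely: since $\Conv$ is monocolored with object monoid $\NN$, a $\Conv$-algebra is a set $X=A(1)$ together with, for each convex matrix $M\in\Conv(m,n)$, a map $\mu_M\colon X^m\to X^n$ (identifying $A(n)$ with $X^n$ via monoidality), such that $\mu$ respects matrix multiplication (composition), direct sums (the monoidal product, sent to cartesian product), identity matrices, and row/column permutations (the symmetries). A morphism of algebras is a monoidal natural transformation, which is determined by its component $\eta_1\colon A(1)\to B(1)$ and amounts to a map commuting with all the $\mu_M$.

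For $F$, given a convex set $(X,\pi^X)$, I set $\mu_M(x_1,\dots,x_m)=\bigl(\sum_j M_{1,j}x_j,\dots,\sum_j M_{n,j}x_j\bigr)$, which is well-defined because each row of a convex matrix is a distribution. Compatibility with matrix multiplication is exactly the monad associativity square $\pi^X\circ D(\pi^X)=\pi^X\circ\mu$ (a convex combination of convex combinations has coefficients given by the matrix product); the identity matrix $(1)$ gives the identity by the unit law $\pi^X\circ\delta=\mathrm{id}$; direct sums and permutations are immediate. A convex map $f\colon X\to Y$ is sent to the natural transformation with components $f^{\times n}$. For $G$, given $A=(X,\mu_\bullet)$, I define $\pi^X\colon D(X)\to X$ on a distribution $p$ with distinct support $\{x_1,\dots,x_m\}$ and weights $\alpha_j=p(x_j)$ by $\pi^X(p)=\mu_{\vec\alpha}(x_1,\dots,x_m)$, where $\vec\alpha\in\Conv(m,1)=\QConv(m)$ is the corresponding row. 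The unit law follows from $\mu_{(1)}=\mathrm{id}$, and the associativity law follows because operadic substitution in $\QConv$ is matrix multiplication: $\mu_{\vec\beta}\circ(\mu_{\vec\alpha_1}\times\cdots\times\mu_{\vec\alpha_n})=\mu_{\vec\beta\cdot(\vec\alpha_1\oplus\cdots\oplus\vec\alpha_n)}$, whose defining matrix has entries $\beta_k\alpha_{k,j}$, precisely the coefficients of $\mu(P)$.

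The round-trip $GF=\mathrm{id}$ is immediate, since $\pi^X$ reconstructed from $\mu_{\vec\alpha}$ returns $\sum_j\alpha_jx_j$. For $FG=\mathrm{id}$ I must recover every $\mu_M$ from the single-output operations; here I use that $\Set$ is cartesian, so $\mu_M$ is determined by its components $\mathrm{proj}_i\circ\mu_M=\mu_{E_i}\circ\mu_M=\mu_{E_iM}=\mu_{r_i}$, where $E_i\in\Conv(n,1)$ is the $i$-th standard covector and $r_i$ the $i$-th row of $M$. Crucially, $\mu_{E_i}=\mathrm{proj}_i$, because $E_i=(1)\oplus{!}$ is the monoidal product of $\mathrm{id}_1$ with the unique map $1\to 0$, which any monoidal functor must send to the $i$-th projection (the functor preserves the monoidal unit, i.e.\ the terminal object). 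On morphisms, $F$ and $G$ act as $f\mapsto f^{\times\bullet}$ and $\eta\mapsto\eta_1$, which are visibly mutually inverse, so both round-trips are identities on the nose.

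The main obstacle is the one genuinely technical point underlying both the well-definedness of $\pi^X$ and the identification of the monad multiplication with PROP composition: one must control how the combinatorial $0/1$ matrices are sent to the canonical cartesian structure maps. Concretely, independence of $\pi^X$ from a presentation with \emph{repeated} support points reduces to the identity $\mu_M(y_{\phi(1)},\dots,y_{\phi(m)})=\mu_{MD_\phi}(y_1,\dots,y_l)$ for a duplication matrix $D_\phi\in\Conv(l,m)$, which in turn requires that $\mu_{D_\phi}$ be the corresponding diagonal-type map. This is exactly the assertion that the cocommutative comonoid structure carried by each object of $\Conv$ (recorded by the isomorphism $\Conv(1,-)\cong\Comm^{\op}$ noted earlier) is sent to the canonical comonoid structure of the cartesian category $\Set$; since comonoid morphisms into a cartesian object are unique, this is forced, and once it is established the remaining verifications are routine bookkeeping. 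The same argument is insensitive to the choice of semiring, so it in fact proves $\CSet_R\cong\Conv_R(\Set)$ for every $R$.
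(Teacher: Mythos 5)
Your proposal is correct, but it takes a genuinely different route from the paper: the paper's entire proof is a citation, observing that the statement is equivalent to Proposition 3.7 of \cite{fritz2015convex}, where $\Conv$ appears as the Lawvere theory $\sf{FinStoMap}^\op$ and convex sets are identified with its models. You instead construct the inverse functors explicitly, and the real content of your argument is the observation that the cartesian structure of $\Set$ rigidifies the image of the $0/1$ matrices: $A(0)\cong\mathbf{1}$ is terminal, so $\mu_{E_i}$ is forced to be the $i$-th projection, hence every multi-output operation is determined componentwise by the rows, and the duplication matrices $D_\phi$ are forced to be diagonal-type maps. This is exactly what makes a $\Conv$-algebra recoverable from its underlying $\QConv$-operad part, i.e.\ from the monad-algebra data $\pi^X$; in the Lawvere-theoretic formulation this rigidity is baked into the definition of a model (a product-preserving functor), so in effect you are re-deriving, inside the PROP formalism, what Fritz's framework gets by fiat. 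What your route buys is self-containedness and generality: the argument never uses anything specific to $\RR_{\geq 0}$, so it proves $\CSet_R\cong\Conv_R(\Set)$ for every semiring $R$, which the paper's citation does not literally cover. One caveat you should make explicit: the round-trips are identities \emph{on the nose} only under the convention (shared by the paper's own unpacking of monocolored PROP algebras) that an algebra is the data $(X,\{\mu_M\})$ with $A(n)=X^n$; if one takes algebras to be arbitrary strong monoidal functors $\Conv\to\Set$, with unconstrained sets $A(n)$ and coherence isomorphisms as part of the data, then your functors only yield an equivalence, not an isomorphism, of categories. Also, a small slip: in your factorization of $E_i$ the conullary operation should be the unique map $n-1\to 0$, not $1\to 0$, though this does not affect the argument.
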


\begin{proof}
	The statement of this theorem is equivalent to that of Proposition 3.7 in [\textbf{Fritz 2015}]
	
\end{proof}

\begin{example}
    Let $\sf{Top}$ be the category of topological spaces. Then, $\Conv(\sf{Top})$ is the category of \emph{convex topological spaces}. In particular, for a finite set $X$, the set of distributions $D(X)$ with subspace topology, by virtue of inclusion into $\RR^{|X|}$, is an example since taking convex manipulations are continuous.
\end{example}

\begin{example}
	
	Let $\Set_{\on{rel}}$ be the category of sets which are equipped with a relation. More precisely, the objects in this category are pairs $(A, r)$, where $A$ is a set and $r \subseteq A \times A$, and morphisms are functions between sets which preserve the structure relation. Then, convex objects in $\Set_{\on{rel}}$ are precisely convex sets equipped with a convex relation in the sense of Definition \ref{def : convrel}. 
	
\end{example}

	\begin{example}
	As in the example of 2-vector spaces, we see immediately that $\Conv_{R}$-algebras in $\Cat$ are equivalent to $\CSet$-internal categories. 
\end{example}

\begin{example}
	Given an inclusion $R\to S$ of semirings, there is an obvious commutative diagram 
	\[
	\begin{tikzcd}
		\Conv_R \arrow[r]\arrow[d] & \Conv_{S}\arrow[d] \\
		\Mat_R \arrow[r] & \Mat_S 
	\end{tikzcd}
	\]
	of PROPS. For any  complete category $\scr{C}$, These induce forgetful functors on categories of algebras
	\[
	\begin{tikzcd}
		\Mat_S(\scr{C})\arrow[d] \arrow[r] & \Mat_R(\scr{C})\arrow[d]\\
		\Conv_S(\scr{C})\arrow[r] & \Conv_R(\scr{C})
	\end{tikzcd}
	\] 
	which preserve limits. Applying the adjoint functor theorem, all of these functor have left adjoints.
\end{example}

	

\section{The convex Grothendieck construction}\label{sec:GC}

We now turn to considering Grothendieck constructions for functors valued in convex sets. In the convex setting, there is an obvious, na\"ive approach to Grothendieck constructions, following from a direct application of the PROP $\Conv_R$. We will briefly spell out this Grothendieck construction and its proof, however, the examples which motivate our investigations of convexity are such that this is insufficient for our purposes. 

Instead, the key feature of our convex Grothendieck construction will be it's interaction with generalizations of monoidal structures. Along the lines of \cite{MoellerVasilakopoulou}, we will show a correspondence between a certain kind of lax monoidal functors and a certain kind of monoidal fibrations. 

\subsection{The na\"ive convex Grothendieck construction}

Immediately following from the construction of the convexity PROP, we can provide a Grothendieck construction for $\CSet$-valued functors. 

\begin{defn}
	Let $\pi:\scr{D}\to \scr{C}$ be a functor between small categories. A morphism $f:x\to y$ in $\scr{D}$ is said to be \emph{coCartesian}\footnote{Sometimes called opcartesian in the literature.} if, for every diagram in $\scr{D}$ and $\scr{C}$ 
	\[
	\begin{tikzcd}
		& y & \\
		x\arrow[ur,"f"]\arrow[rr,"h"'] & & z \\
		&\rotatebox{-90}{$\mapsto$} & \\
		& \pi(y)\arrow[dr,"h"] & \\
		\pi(x)\arrow[ur,"\pi(f)"]\arrow[rr,"(\pi(h))"'] & & \pi(z) \\
	\end{tikzcd}
	\]
	such that the bottom triangle commutes in $\scr{C}$, there exists a unique $\widetilde{h}:y\to z$ in $\scr{D}$ making the top triangle commute, and such that $\pi(\widetilde{h})=h$.  
\end{defn}

\begin{defn}
	A functor $\pi:\scr{D}\to \scr{C}$ is said to be a \emph{discrete fibration} if the strict fibres $\pi^{-1}(c)$ are discrete (i.e., sets) for all $c\in \on{Ob}(\scr{C})$ and every morphism of $\scr{D}$ is coCartesian. A morphism of discrete fibrations from $\pi:\scr{D}\to\scr{C}$ to $\rho:\scr{E}\to\scr{C}$ is a functor $F:\scr{D}\to\scr{E}$ making the diagram 
	\[
	\begin{tikzcd}
		\scr{D}\arrow[dr,"\pi"'] \arrow[rr,"F"] & & \scr{E} \arrow[dl,"\rho"]\\
		& \scr{C} & 
	\end{tikzcd}
	\]
	commute. We denote the category of discrete fibrations by $\sf{DFib}(\scr{C})$. 
\end{defn}

Given a functor $F:\scr{C}\to \Set$, we can construct a discrete fibration $\int_{\scr{C}}F\to \scr{C}$ as follows. Define $\int_{\scr{C}}F$ to have objects given by pairs $(c,x)$, where $c$ is an object of $\scr{C}$ and $x\in F(c)$. A morphism $(c,x)\to (d,y)$ is a morphism $f:c\to d$ in $\scr{C}$ such that $F(f)(x)=y$.  It is not hard to check that this construction defines a functor 
\[
\begin{tikzcd}
	\int_{\scr{C}}:&[-3em] \Fun(\scr{C},\Set)\arrow[r] & \sf{DFib}(\scr{C}).
\end{tikzcd}
\]
In the case we are concerned with, this is the famous \emph{Grothendieck construction}, the utility of which lies in the following result.

\begin{thm}[The Grothendieck construction]
	For any small category $\scr{C}$, the functor 
	\[
	\begin{tikzcd}
		\int_{\scr{C}}:&[-3em] \Fun(\scr{C},\Set)\arrow[r] & \sf{DFib}(\scr{C})
	\end{tikzcd}
	\]
	is an equivalence of categories. 
\end{thm}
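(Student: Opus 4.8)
The plan is to exhibit an explicit quasi-inverse to $\int_{\scr{C}}$, namely the functor that reads off the fibres, and then to check that the two round-trips are naturally isomorphic to the respective identities. Throughout I use the defining property of a discrete fibration in the operative form: for each object $x\in\scr{D}$ with $\pi(x)=c$ and each morphism $f:c\to d$ in $\scr{C}$, there is a \emph{unique} morphism $\widetilde{f}_x$ of $\scr{D}$ lying over $f$ with source $x$; I write $f\cdot x$ for its target. (Discreteness of fibres together with every morphism being coCartesian is exactly what pins down this unique lift.)

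First I would construct the candidate inverse $\Phi:\sf{DFib}(\scr{C})\to\Fun(\scr{C},\Set)$. Given $\pi:\scr{D}\to\scr{C}$, define $\Phi(\pi):\scr{C}\to\Set$ on objects by $c\mapsto \pi^{-1}(c)$ and on a morphism $f:c\to d$ by $x\mapsto f\cdot x$. Functoriality is exactly where uniqueness of lifts does the work: the lift of $\on{id}_c$ at $x$ must be $\on{id}_x$, so $\Phi(\pi)(\on{id}_c)=\on{id}$, and for composable $f,g$ the composite $\widetilde{g}_{\,f\cdot x}\circ\widetilde{f}_x$ lies over $g\circ f$ with source $x$, whence by uniqueness it equals $\widetilde{(g\circ f)}_x$, giving $\Phi(\pi)(g\circ f)=\Phi(\pi)(g)\circ\Phi(\pi)(f)$. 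A morphism of discrete fibrations $F:\scr{D}\to\scr{E}$ over $\scr{C}$ restricts on each fibre to a function $\pi^{-1}(c)\to\rho^{-1}(c)$, and since $F$ sends morphisms over $f$ to morphisms over $f$ it commutes with the canonical lifts; this yields a natural transformation $\Phi(F)$, and $\Phi$ is visibly functorial.

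Next I would produce the two natural isomorphisms. For $\Phi\circ\int_{\scr{C}}\cong\on{id}$: the fibre of $\int_{\scr{C}}F$ over $c$ is $\{(c,x)\mid x\in F(c)\}$, canonically bijective with $F(c)$, and under this bijection the lift of $f:c\to d$ sends $(c,x)$ to $(d,F(f)(x))$, so the action agrees with $F(f)$; the bijections are natural in $c$ and in $F$. For $\int_{\scr{C}}\circ\,\Phi\cong\on{id}$, I would build, for each $\pi:\scr{D}\to\scr{C}$, an isomorphism $\Psi_\pi:\int_{\scr{C}}\Phi(\pi)\to\scr{D}$ over $\scr{C}$: on objects it sends $(c,x)$ with $x\in\pi^{-1}(c)$ to $x$, a bijection on object sets since the objects of $\scr{D}$ are partitioned by their images; on morphisms, a map $(c,x)\to(d,y)$ in $\int_{\scr{C}}\Phi(\pi)$ is precisely an $f:c\to d$ with $f\cdot x=y$, which I send to $\widetilde{f}_x:x\to y$.

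The step I expect to be the main obstacle is checking that $\Psi_\pi$ is an isomorphism \emph{on morphisms}, i.e.\ that $f\mapsto\widetilde{f}_x$ is a bijection between $\{f:c\to d\mid f\cdot x=y\}$ and $\Hom_{\scr{D}}(x,y)$. Both injectivity and surjectivity rest on the coCartesian/discrete-fibre hypothesis: any $g:x\to y$ lies over $\pi(g)$ and, by uniqueness of lifts, equals $\widetilde{\pi(g)}_x$, so it is hit exactly once. I would then verify that $\Psi_\pi$ is a functor (it respects composites because the composite of canonical lifts is again the canonical lift, as in the functoriality check above), that it commutes with the projections to $\scr{C}$, and that it is natural in $\pi$. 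Assembling these, $\int_{\scr{C}}$ and $\Phi$ are mutually quasi-inverse, proving the equivalence.
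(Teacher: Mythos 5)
The paper itself offers no proof of this theorem --- it is invoked as the classical, ``famous'' Grothendieck construction --- so there is no argument of the authors' to compare against; your quasi-inverse argument (the fibre functor $\Phi$, functoriality via uniqueness of lifts, and the two natural isomorphisms, with the morphism-level bijectivity of $\Psi_\pi$ checked last) is precisely the standard proof, and every step after your ``operative form'' of the hypothesis is carried out correctly.

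The one genuine problem is the parenthetical claim that this operative form --- for every $x\in\scr{D}$ and every $f:\pi(x)\to d$ there \emph{exists} a unique lift of $f$ with source $x$ --- follows from the paper's stated definition of discrete fibration. It does not. The paper only requires that the strict fibres be discrete and that every morphism of $\scr{D}$ be coCartesian; these two conditions do give \emph{uniqueness} of lifts (given two lifts $g_1:x\to y_1$ and $g_2:x\to y_2$ of $f$, coCartesianness of $g_1$ produces a comparison morphism $y_1\to y_2$ over $\on{id}_d$, which is an identity by discreteness of the fibre, so $g_1=g_2$), but they do \emph{not} give existence. Concretely, take $\scr{C}=[1]$ and let $\scr{D}$ be the one-object discrete category sitting over $0$: both strict fibres are discrete, the only morphism $\on{id}$ is (vacuously) coCartesian, yet the morphism $0\to 1$ of $\scr{C}$ admits no lift out of that object. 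Moreover this $\pi$ is not isomorphic over $[1]$ to any $\int_{\scr{C}}F$, since that would force $F(0)$ to be a singleton and $F(1)=\varnothing$, leaving no possible value for $F(0\to 1)$; so under the paper's literal definition the functor $\int_{\scr{C}}$ is not essentially surjective and the theorem is false. Your proof breaks exactly where you assert existence of $\widetilde{f}_x$. The repair lies not in your argument but in the definition: a discrete fibration must additionally be required to have coCartesian lifts of all morphisms (equivalently, it can be defined outright by the unique-lifting property you actually use), and with that corrected definition your proof is complete and correct.
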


To leverage the Grothendieck construction in our context, we need a lemma about monoidal categories. 

\begin{lem}\label{lem:sm_into_functor_cat}
	Let $(\scr{C},\otimes,E)$ be a small symmetric monoidal category, $\scr{I}$ a small category, and consider $\Set$ and $\Fun(\scr{I},\Set)$ to be equipped with the Cartesian symmetric monoidal structures. There is an isomorphism 
	\[
	\Fun^{\on{sm}}(\scr{C},\Fun(\scr{I},\Set))\cong \Fun(\scr{I},\Fun^{\on{sm}}(\scr{C},\Set)).
	\]
\end{lem}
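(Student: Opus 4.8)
The plan is to exhibit the claimed isomorphism as the restriction of the canonical ``argument-swapping'' isomorphism coming from the cartesian closed structure of $\Cat$. First I would record the underlying strict isomorphism of functor categories
\[
\Fun(\scr{C},\Fun(\scr{I},\Set)) \cong \Fun(\scr{I},\Fun(\scr{C},\Set)),
\]
which sends a functor $F$ to its transpose $\widehat{F}$ determined by $\widehat{F}(i)(c) = F(c)(i)$, and acts by the evident relabelling on natural transformations. Since this is an isomorphism of the ambient (non-monoidal) functor categories, the entire problem reduces to checking that it carries the symmetric-monoidal data and axioms present on the left-hand side to exactly the data and axioms demanded on the right-hand side.

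The crucial input is that the cartesian monoidal structure on $\Fun(\scr{I},\Set)$ is computed pointwise: products are formed objectwise, the unit is the constant functor at the singleton, and, most importantly, equality of morphisms together with the coherence isomorphisms are all detected by the evaluation functors $\on{ev}_i \colon \Fun(\scr{I},\Set) \to \Set$, each of which is a strict symmetric monoidal functor. Consequently, a symmetric monoidal structure on $F$ consists of structure morphisms $\mu_{c,c'} \colon F(c) \times F(c') \to F(c \otimes c')$ and $\epsilon \colon \ast \to F(E)$ in $\Fun(\scr{I},\Set)$, and applying $\on{ev}_i$ yields precisely the structure morphisms making each transpose $\widehat{F}(i) \colon \scr{C} \to \Set$ a symmetric monoidal functor. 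The associativity, unit, and symmetry axioms are equalities of morphisms in $\Fun(\scr{I},\Set)$, hence hold if and only if they hold after every $\on{ev}_i$, i.e.\ if and only if each $\widehat{F}(i)$ satisfies them; this establishes the object-level correspondence. The same argument works verbatim for lax structures, since being an isomorphism is also a pointwise condition.

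The step I expect to be the main point, rather than a genuine obstacle, is identifying the remaining naturality conditions correctly. The assignment $i \mapsto \widehat{F}(i)$ must be a functor into $\Fun^{\on{sm}}(\scr{C},\Set)$, whose morphisms are \emph{monoidal} natural transformations; for $\phi \colon i \to j$ in $\scr{I}$ the candidate $\widehat{F}(\phi)$ has components $F(c)(\phi)$. The requirement that each $\widehat{F}(\phi)$ be a monoidal natural transformation is exactly the assertion that the squares expressing compatibility of $F(c)(\phi) \times F(c')(\phi)$ with the $i$- and $j$-components of $\mu_{c,c'}$ commute; but these squares are precisely the components at $\phi$ of the naturality of $\mu_{c,c'}$ viewed as a morphism of $\Fun(\scr{I},\Set)$. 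Thus ``$\mu_{c,c'}$ is a morphism in $\Fun(\scr{I},\Set)$, natural in $\scr{C}$'' on the left translates into ``each $\widehat{F}(i)$ is symmetric monoidal and each $\widehat{F}(\phi)$ is a monoidal natural transformation'' on the right, and the compatibility with $\epsilon$ works identically. Finally I would run the same pointwise argument one categorical level down to see that symmetric monoidal natural transformations on the left correspond to natural transformations valued in monoidal natural transformations on the right, completing the identification of morphisms and hence the isomorphism of categories.
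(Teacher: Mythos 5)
Your proposal is correct and follows essentially the same route as the paper's proof: both start from the transposition isomorphism given by cartesian closedness of $\Cat$, then use the pointwise nature of the cartesian monoidal structure on $\Fun(\scr{I},\Set)$ to identify symmetric monoidal data and coherences on $F$ with objectwise symmetric monoidal structures on the $\widehat{F}(i)$, and identify naturality of the structure maps in the $\scr{I}$-direction with monoidality of the transition transformations $\widehat{F}(\phi)$. The only cosmetic difference is that you phrase the pointwise reduction via the evaluation functors $\on{ev}_i$, whereas the paper encodes natural transformations as functors $\scr{I}\to\Fun(\scr{C}\times\scr{C}\times[1],\Set)$; the mathematical content is the same.
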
 

\begin{proof}
	If we remove the superscripts ``$\on{sm}$'', the isomorphism 
	\[
	\Fun^{}(\scr{C},\Fun(\scr{I},\Set))\cong \Fun(\scr{I},\Fun^{}(\scr{C},\Set))
	\]
	is simply a two-fold application of the Cartesian closedness of $\Cat$. We will denote the image of a functor $F:\scr{C}\to \Fun(\scr{I},\Set)$ under this isomorphism by $\widetilde{F}$. 
	
	The structure of $F:\scr{C}\to \Fun(\scr{I},\Set)$ being a \emph{symmetric monoidal} functor amounts having an isomorphism $\ast\cong F(E)$ and a natural isomorphism $F(x)\times F(y)\cong F(x\otimes y)$ satisfying coherence conditions with the associators, unitors, and braiding of $\Fun(\scr{I},\Set)$.
	
	Again by the Cartesian closure of $\Cat$, a natural transformation between functors $\scr{C}\times \scr{C}\to \Fun(\scr{I},\Set)$ is the same thing as a functor 
	\[
	\scr{I}\to \Fun(\scr{C}\times \scr{C}\times [1],\Set).
	\]
	Thus, the data of the symmetric monoidal structure provide equivalent data for symmetric monoidal structures on each functor $\widetilde{F}(i)$, together with the requirement that, for $f:i\to j$ the induced maps $\widetilde{F}(f)$ commute with these data. That is, so long as these data satisfy the coherence conditions of symmetric monoidal functors, the functor $\widetilde{F}$ will take values in monoidal natrual transformations. However, the coherence conditions for $F$ are checked objectwise in the functor category, meaning that the functors $\widetilde{F}(i)$ are, indeed, symmetric monoidal. This completes the proof. 
\end{proof}

\begin{cor}\label{cor:naiveGC}
	The Grothendieck construction induces an equivalence of categories
	\[
	\begin{tikzcd}
		\cint_\scr{C}:&[-3em]\Fun(\scr{C},\CSet) \arrow[r] & \Conv(\sf{DFib}(\scr{C})).
	\end{tikzcd}
	\]
\end{cor}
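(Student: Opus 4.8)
The plan is to deduce this corollary from the classical Grothendieck construction together with the observation that $\CSet \cong \Conv(\Set)$ from Theorem \ref{thm : convset}. The classical Grothendieck construction gives an equivalence $\int_{\scr{C}}\colon \Fun(\scr{C},\Set)\to \sf{DFib}(\scr{C})$, and the strategy is to show that applying $\Conv(-)$ to both sides of this equivalence yields the desired statement, once we identify each side correctly.

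First I would unwind the left-hand side. A $\Conv$-algebra in $\Fun(\scr{C},\Set)$ --- where the latter carries the Cartesian monoidal structure computed objectwise --- is, by Definition \ref{def : convobj}, a symmetric monoidal functor $\Conv \to \Fun(\scr{C},\Set)$. By Lemma \ref{lem:sm_into_functor_cat} (applied with the PROP $\Conv$ in place of the symmetric monoidal category $\scr{C}$ there, and our base category $\scr{C}$ in place of $\scr{I}$), such functors are the same as functors $\scr{C}\to \Fun^{\on{sm}}(\Conv,\Set)=\Conv(\Set)$. Invoking Theorem \ref{thm : convset} to identify $\Conv(\Set)\cong \CSet$, this gives a natural identification
\[
\Conv\bigl(\Fun(\scr{C},\Set)\bigr)\cong \Fun(\scr{C},\CSet).
\]
Thus the classical equivalence $\int_{\scr{C}}$, being an equivalence of symmetric monoidal categories for the objectwise Cartesian structures, induces an equivalence on categories of $\Conv$-algebras, and the left-hand side of that induced equivalence is precisely $\Fun(\scr{C},\CSet)$.

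The remaining step is to verify that the equivalence $\int_{\scr{C}}$ is indeed (strong) symmetric monoidal for the Cartesian structures on $\Fun(\scr{C},\Set)$ and on $\sf{DFib}(\scr{C})$, so that it carries $\Conv$-algebras to $\Conv$-algebras and gives an equivalence $\Conv(\Fun(\scr{C},\Set))\xrightarrow{\sim}\Conv(\sf{DFib}(\scr{C}))$. This follows because any equivalence of categories with finite products automatically preserves those products, and hence is canonically symmetric monoidal for the Cartesian structures; and passing to $\Conv$-algebras ($=$ Cartesian symmetric monoidal functors out of $\Conv$, which depend only on the finite-product structure) is functorial and preserves equivalences. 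Composing the two identifications yields the equivalence $\cint_{\scr{C}}$ as claimed.

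The main obstacle to watch is the bookkeeping in the first step: one must check that the two Cartesian structures involved --- the objectwise product on $\Fun(\scr{C},\Set)$ and the product-of-values used in $\Fun^{\on{sm}}(\Conv,\Set)$ --- are matched correctly by Lemma \ref{lem:sm_into_functor_cat}, so that a $\Conv$-algebra structure on a functor $\scr{C}\to\Set$ objectwise really does assemble into a $\Conv$-algebra in the functor category, and vice versa. This is exactly the content of the lemma (with $\Conv$ as the source PROP), so once it is correctly invoked the verification is routine; the only genuine care needed is that $\Conv$ is a \emph{strict} symmetric monoidal category and the lemma is stated for small symmetric monoidal sources, which $\Conv$ is.
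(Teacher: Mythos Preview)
Your proposal is correct and follows essentially the same route as the paper: apply $\Conv(-)$ to both sides of the classical Grothendieck equivalence, then use Lemma~\ref{lem:sm_into_functor_cat} (together with Theorem~\ref{thm : convset}) to identify $\Conv(\Fun(\scr{C},\Set))$ with $\Fun(\scr{C},\CSet)$. The paper's version is terser, leaving implicit your justification that an equivalence of categories with finite products is automatically Cartesian symmetric monoidal and hence induces an equivalence on $\Conv$-algebras.
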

\begin{proof}
		Since the Grothendieck construction is an equivalence of categories, it induces an equivalence of $\Conv$-algebras
		\[
		\Conv(\Fun(\scr{C},\Set))\simeq \Conv(\sf{DFib}(\scr{C})). 
		\]
		By Lemma \ref{lem:sm_into_functor_cat}, the former is equivalent to 
		\[
		\Fun(\scr{C},\Conv(\Set))\cong \Fun(\scr{C},\CSet).\qedhere
		\] 
\end{proof}	

\begin{rmk}
	Convex objects in discrete fibrations are simply discrete fibrations $\pi:\scr{D}\to \scr{C}$ equipped with convex structures on their fibres together with convex structures on the sets  $\pi^{-1}(f)$ for every $f:c\to d$ in $\scr{C}$ such that the source and target maps satisfy 
	\[
	s\left(\sum_{i}\alpha_i \phi_i\right)=\sum_i\alpha_i s(\phi_i), \qquad  t\left(\sum_{i}\alpha_i \phi_i\right)=\sum_i\alpha_i t(\phi_i) 
	\]
	for $\{\phi_i\}\subset \pi^{-1}(f)$
	and the identity maps satisfy 
	\[
	\on{Id}_{\sum_i \alpha_i x_i}=\sum_{i}\alpha_i \on{Id}_{x_i} 
	\]
	for $\{x_i\}\subset \pi^{-1}(c)$. In this sense, $\cint_{\scr{C}}$ simply remembers the convex structures on the sets $F(c)$, and remembers $f:(c,\sum {\alpha_i}(x_i))\to (d,\sum \alpha_i y_i)$ as the corresponding combination of the morphisms $f:(c,x_i)\to (d,y_i)$. 
\end{rmk}

As we will have cause to refer to it later, we turn this observation into a definition.

\begin{defn}
	Let $\pi:\scr{D}\to \scr{C}$ be a discrete fibration. A \emph{fibrewise convex structure} on $\pi$ consists of 
	\begin{itemize}
		\item A convex structure on $\pi^{-1}(c)$ for each $c\in \scr{C}$. 
		\item A convex structure on $\pi^{-1}(f)$ for each $f:c\to d$ in $\scr{C}$. 
	\end{itemize}
	Such that the following three conditions are satisfied for any convex vector $\vec{\alpha}$, any $f:c\to d$ in $\scr{C}$, and any collection $\{\phi_i\}\subset \pi^{-1}(f)$. 
	\begin{itemize}
		\item The source map $s$ of $\scr{D}$ satisfies 
		\[
		s\left(\sum_{i}\alpha_i \phi_i\right)=\sum_i\alpha_i s(\phi_i)
		\]
		\item The target map $t$ of $\scr{D}$ satisfies 
		\[
		t\left(\sum_{i}\alpha_i \phi_i\right)=\sum_i\alpha_i t(\phi_i) 
		\]
		\item The identity maps satisfy 
		\[
		\on{Id}_{\sum_i \alpha_i x_i}=\sum_{i} \alpha_i \on{Id}_{x_i} 
		\] 
	\end{itemize}
	Morphisms of fibrewise convex fibrations are morphisms of discrete fibrations which induce convex maps on fibres over objects and morphisms. We denote the category of fibrewise convex fibrations as $\sf{FCFib}_{\scr{C}}$
\end{defn}

\subsection{$\on{QConv}_R$-monoidal categories}

While the na\"ive convex Grothendieck construction is of interest in itself, we will shortly see that many natural examples of Grothendieck constructions of functors valued in convex sets are fundamentally connected to the convex tensor product and the generalizations of monoidal structures described in \cite[\S 2]{OMonGroth}. One key set of examples will be generalized monoidal structures governed by the operad $\on{QConv}_R$ itself. The theory of such generalized monoidal structures is laid out in detail in the paper \cite{OMonGroth} of the last two named authors. However, in this section, we will recapitulate some of the main definitions and results from \emph{op. cit.} focusing on a more intuitive presentation which sweeps some of the more complex coherence conditions under the rug. 

\begin{defn}
	Given a symmetric, monocolored operad $\scr{O}$, a $\scr{O}$-monoidal category consists of the following data: 
	\begin{enumerate}
		\item A category $\C$. 
		\item For every operation $z\in \scr{O}(n)$, an $n$-ary operation 
		\[
		\begin{tikzcd}
			\otimes_z:&[-3em] \C^{\times n}\arrow[r] & \C. 
		\end{tikzcd}
		\]
		\item For every $z\in \scr{O}(m)$, $x_1\in \scr{O}(n_1),\ldots,x_n\in \scr{O}(n_m)$, and permutation $\sigma$ of $m$, natural isomorphisms 
		\[
		\otimes_{z}\circ \left(\otimes_{x_{\sigma(1)}}\times\cdots\times \otimes_{x_{\sigma(n)}}\right)\cong \otimes_{(z\cdot \sigma)\circ(x_1,\ldots,x_n)}.
		\]
		We will denote these 2-isomorphisms by $\phi^{\sigma;z;x_1,\ldots,x_n}$.
	\end{enumerate} 
	Such that the $1$-ary operation associated to the operadic unit is the identity, and the natural isomorphisms of part (2) respect the associativity and unitality of composition. 
\end{defn}

\begin{rmk}
	This is a reformulation of the definition given in \cite{OMonGroth}. In that paper, all of the $n$-ary operations involved in the structure are packaged into a single functor 
	\[
	\begin{tikzcd}
		\scr{O}(n)\times \C^{\times n} \arrow[r] & \C, 
	\end{tikzcd}
	\]
	and likewise for the natural transformations. 
\end{rmk}

\begin{example}\
	\begin{enumerate}
		\item In the cases where $\scr{O}$ is the associative operad $\Assoc$ or the commutative operad $\Comm$, respectively, $\scr{O}$-monoidal categories are simply monoidal and symmetric monoidal categories, respectively, as shown in \cite[\S 3]{OMonGroth}.
		\item In the case of the trivial operad, $\scr{O}$-monoidal categories are simply categories. 
		\item $\on{QConv}$-monoidal categories have operations 
		\[
		\begin{tikzcd}
			\otimes_{\vec{\alpha}}: &[-3em] \scr{C}^{\otimes n} \arrow[r] & \scr{C} 
		\end{tikzcd}
		\]
		for every convex $n$-vector $\vec{\alpha}=(\alpha_1,\ldots,\alpha_n)$. There are two key kinds of structure isomorphisms: natural isomorphisms 
		\[
		\otimes_{(\alpha_1 \beta_1^1,\ldots \alpha_1\beta_{k_1}^{1},\alpha_2\beta_1^{2},\ldots, \alpha_n \beta_{k_n}^{n})}\cong \otimes_{\vec{\alpha}}\circ(\otimes_{\vec{\beta}^1}\times \cdots\times \otimes_{\vec{\beta}^n})
		\] 
		and permutation isomorphisms 
		\[
		\otimes_{\vec{\alpha}} \circ \sigma \cong \otimes_{\sigma\cdot \vec{\alpha}}.
		\]
	\end{enumerate}
\end{example}

\begin{defn}
	A \emph{lax $\scr{O}$-functor} between $\scr{O}$-monoidal categories $(\scr{C},\otimes,\phi)$ and $(\scr{D},\odot,\psi)$ consists of a functor $F:\scr{C}\to \scr{D}$, and, for every $z\in \scr{O}(n)$, a natural transformation 
	\[
	\begin{tikzcd}
		\xi^z:&[-3em]\odot_{z}\circ(F^{\times n})\arrow[r,Rightarrow] & F\circ\otimes_{z}. 
	\end{tikzcd}
	\]
	such that $\xi^u=\on{id}$ when $u$ is the operadic unit and the $\xi$'s commute with the $\phi$ and $\psi$. 
	
	An \emph{$\scr{O}$-monoidal transformation} between $\scr{O}$-monoidal functors $(F,\xi)$ and $(G,\zeta)$ is a 2-morphism $\mu:F\Rightarrow G$ which commutes with $\xi$ and $\zeta$. 
	
	With these definitions, the collection of $\scr{O}$-monoidal categories becomes a strict 2-category $\scr{O}\sf{Mon}$. We denote the hom-categories in this 2-category by $\on{Fun}^{\scr{O},\on{lax}}(-,-)$. 
\end{defn}

\begin{prop}
	A morphism of operads $f:\scr{P}\to \scr{O}$ induces a 2-functor 
	\[
	\begin{tikzcd}
		f^\ast: &[-3em] \scr{O}\sf{Mon}\arrow[r] & \scr{P}\sf{Mon} 
	\end{tikzcd}
	\]
	which sends an $\scr{O}$-monoidal category $(\scr{C},\otimes,\phi)$ to the $\scr{P}$-monoidal category $(\scr{C},f^\ast\otimes,f^\ast\phi)$, where $f^\ast\otimes_{z}=\otimes_{f(z)}$, and  $(f^\ast\phi)^{\sigma;z;x_1,\ldots,x_n}=\phi^{\sigma;f(z);f(x_1),\ldots,f(x_n)}$. 
\end{prop}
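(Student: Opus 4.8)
The proof is entirely a matter of unwinding the definitions and invoking, at each step, the fact that a morphism of operads $f$ preserves operadic composition, operadic units, and the symmetric group actions. The plan is to define $f^\ast$ on objects, $1$-morphisms, and $2$-morphisms, verify that each assignment lands in the asserted target, and then observe that strict $2$-functoriality is automatic.

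First I would check that $f^\ast(\scr{C},\otimes,\phi)=(\scr{C},f^\ast\otimes,f^\ast\phi)$ is a genuine $\scr{P}$-monoidal category. Since $f$ restricts to maps $\scr{P}(n)\to\scr{O}(n)$, the operation $f^\ast\otimes_z:=\otimes_{f(z)}$ is indeed $n$-ary for $z\in\scr{P}(n)$. The crucial point is that the prospective structure isomorphism $(f^\ast\phi)^{\sigma;z;x_1,\ldots,x_n}=\phi^{\sigma;f(z);f(x_1),\ldots,f(x_n)}$ has the correct source and target: its target is indexed by $(f(z)\cdot\sigma)\circ(f(x_1),\ldots,f(x_n))$, which equals $f\big((z\cdot\sigma)\circ(x_1,\ldots,x_n)\big)$ precisely because $f$ commutes with composition and the $\Sigma$-action, so it agrees with $f^\ast\otimes_{(z\cdot\sigma)\circ(x_1,\ldots,x_n)}$. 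The unit condition holds because $f$ sends the operadic unit of $\scr{P}$ to that of $\scr{O}$, whence $f^\ast\otimes$ of the unit is the identity. Finally, each coherence diagram required of $f^\ast\phi$ is, after substituting the definitions, literally a coherence diagram for $\phi$ indexed by the $f$-images of the relevant operations, and these commute by hypothesis.

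Next I would define $f^\ast$ on a lax $\scr{O}$-functor $(F,\xi)$ by $f^\ast(F,\xi)=(F,f^\ast\xi)$ with $(f^\ast\xi)^z:=\xi^{f(z)}$, and on a $2$-morphism $\mu$ by leaving it unchanged. The checks are parallel: $(f^\ast\xi)^z$ has the right type since $\odot_{f(z)}\circ F^{\times n}=f^\ast\odot_z\circ F^{\times n}$ and $F\circ\otimes_{f(z)}=F\circ f^\ast\otimes_z$; the unit condition transfers because $f$ preserves units; and compatibility of $f^\ast\xi$ with $f^\ast\phi$ and $f^\ast\psi$ is the $f$-pullback of the compatibility of $\xi$ with $\phi$ and $\psi$. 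That $\mu$ remains a $\scr{P}$-monoidal transformation is immediate, since the commutation of $\mu$ with $\xi^{f(z)}$ and $\zeta^{f(z)}$ for $z\in\scr{P}(n)$ is a special case of the $\scr{O}$-level condition.

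It then remains to confirm that $f^\ast$ is a strict $2$-functor, i.e.\ that it preserves identity $1$-morphisms and $2$-morphisms and both horizontal and vertical composition on the nose. Because $f^\ast$ alters none of the underlying functors or natural transformations and only reindexes the structure data along $f$, and because the composite of lax functors is built from composites of the underlying functors together with pastings of their $\xi$'s, reindexing commutes with all of these operations; functoriality therefore holds strictly. The only step with any content is the transfer of coherence in the second paragraph, and even this is purely bookkeeping: every coherence constraint for the pulled-back structure is the $f$-image of one already assumed for the original structure, so no genuine obstacle arises.
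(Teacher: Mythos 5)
Your proof is correct, but it takes a genuinely different route from the paper: the paper offers no argument at all here, its entire proof being a citation to the companion paper \cite{OMonGroth}, where the statement is established for the equivalent ``packaged'' formulation in which the $n$-ary operations are assembled into single functors $\scr{O}(n)\times\scr{C}^{\times n}\to\scr{C}$. You instead verify the claim directly against the unpacked definition used in this paper, and you correctly isolate the only point of real content: the pulled-back constraint $(f^\ast\phi)^{\sigma;z;x_1,\ldots,x_n}=\phi^{\sigma;f(z);f(x_1),\ldots,f(x_n)}$ has the right source and target exactly because $f$ commutes with operadic composition and the $\Sigma$-actions, so that $(f(z)\cdot\sigma)\circ(f(x_1),\ldots,f(x_n))=f\bigl((z\cdot\sigma)\circ(x_1,\ldots,x_n)\bigr)$; the unit axiom follows since $f$ preserves operadic units; every coherence, naturality, and compatibility condition for the pulled-back data is an instance, indexed by $f$-images, of the corresponding condition assumed for the original data; and strict $2$-functoriality is automatic because $f^\ast$ leaves the underlying functors and natural transformations untouched and merely reindexes structure data, which commutes with the pastings defining composition of lax functors. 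What each approach buys: the paper's citation keeps this section lightweight and defers the bookkeeping to a reference whose conventions differ slightly from the reformulation used here, while your argument is self-contained, matches the present paper's conventions, and makes visible that the proposition needs nothing beyond the definition of a morphism of operads. The one thing a fully rigorous write-up would add is an explicit listing of the coherence diagrams being transferred---the paper's reformulation deliberately leaves these ``under the rug''---but your uniform reindexing argument handles them all at once, so this is a matter of presentation rather than a gap.
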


\begin{proof}
	This is \cite{OMonGroth}.
\end{proof}

In particular, since the commutative operad is terminal, this proposition tells us that for any operad $\scr{O}$, a symmetric monoidal structure on $\scr{C}$ induces a canonical $\scr{O}$-monoidal structure. We will call such $\scr{O}$-monoidal structures \emph{trivial}.

\begin{example}
	There is a non-trivial $\QConv$-monoidal structure $\star_{\vec{\alpha}}$ on the category $\CSet$ of convex sets given by defining 
	\[
	\begin{tikzcd}
		\star_{\vec{\alpha}}(X_1,\ldots,X_n) 
	\end{tikzcd}
	\]
	to be the subset of the join $X_1\star\cdots\star X_n$ on the elements of the form 
	\[
	\sum_{i=1}^n \alpha_i x_i
	\]
	for $x_i\in X_i$. The structure morphisms are given by those of the (trivial) join $\QConv$-structure. 
\end{example}

\begin{example}\
	\begin{enumerate}
		\item The identity functor $\CSet\to \CSet$ can be given the structure of a lax monoidal functor from $\pi^\ast\star$ to $\star_{\vec{\alpha}}$ , using the defining inclusions 
		\[
		\begin{tikzcd}
			\iota_{\vec{\alpha}}^{\vec{X}}:&[-3em]\star_{\vec{\alpha}}(X_1,\ldots,X_n) \arrow[r,hookrightarrow] & X_1\star \cdots \star X_n
		\end{tikzcd}
		\]
		\item The identity functor $\CSet\to \CSet$ can be given the structure of a lax monoidal functor from $\star_{\vec{\alpha}}$ to $\pi^\ast\times$ by using the morphisms 
		\[
		\begin{tikzcd}
			\ell_{\vec{\alpha}}^{\vec{X}}: &[-3em] X_1\times\cdots \times X_n \arrow[r] & \star_{\vec{\alpha}}(X_1,\ldots,X_n)
		\end{tikzcd}
		\]
		which send $(x_1,\ldots,x_n)$ to 
		\[
		\sum\alpha_ix_i.
		\]
		Notice that this formula yields well-defined maps 
		\[
		\begin{tikzcd}
			X_1\times\cdots \times X_n \arrow[r] & X_1\star\cdots\star X_n
		\end{tikzcd}
		\]
		it does \emph{not} define a lax symmetric monoidal functor, as the structure morphisms necessarily depend on the choice of $\vec{\alpha}$. 
	\end{enumerate}
\end{example}

In addition to these, we list a few lax symmetric monoidal functors, which will be of use in the sequel.

\begin{example} \label{example : QConv-mon}
		
		 The distributions monad $D_R: \sf{FinSet}\to \CSet$ is a lax $\QConv$-monoidal functor from $\pi^\ast\amalg$ to $\pi^\ast \otimes$ with structure maps 
		\[
		\begin{tikzcd}[row sep=0em]
			\xi_{\vec{\alpha}}: &[-3em] D_R(X_1)\times \cdots \times D_R(X_n) \arrow[r] & D(X_1\amalg\cdots \amalg X_n)\\
			& (p_1,\ldots,p_n) \arrow[r,mapsto] & {\displaystyle\sum_{i=1}^n} \alpha_i p_i 
		\end{tikzcd}
		\]
		where, in the latter sum, each $p_i$ is viewed as a distribution concentrated on the summand $X_i$. 
	
\end{example}

\subsection{The Grothendieck construction}

The starting point of our convex $\scr{O}$-monoidal Grothendieck construction is the main theorem of \cite{OMonGroth}, here we abusively by $\times$ the trivial $\scr{O}$-monoidal structure induced by the Cartesian symmetric monoidal structure.

\begin{defn}
	Let $(\scr{I},\otimes,\phi)$ be a small $\scr{O}$-monoidal category. An \emph{$\scr{O}$-monoidal fibration} over $\scr{I}$ consists of an $\scr{O}$-monoidal category $(\scr{C},\odot,\psi)$ and a \emph{strict} $\scr{O}$-monoidal functor $(\pi,\xi):\scr{C}\to\scr{I}$ such that $\pi$ is, additionally, a discrete fibration. A \emph{morphism} of such $\scr{O}$-monoidal fibrations is a commutative diagram 
	\[
	\begin{tikzcd}
		\scr{C} \arrow[rr,"f"]\arrow[dr,"\pi"'] & & \scr{D} \arrow[dl,"p"]\\
		& \scr{I} & 
	\end{tikzcd}
	\]
	such that $f$ is a \emph{strict} monoidal functor. We denote (somewhat abusively) by $\scr{O}\sf{Fib}_{\scr{I}}$ the category of $\scr{O}$-monoidal fibrations over $\scr{I}$. 
\end{defn}

\begin{thm}
	Let $(\scr{I},\otimes,\phi)$ be a small $\scr{O}$-monoidal category. Then the classical Grothendieck construction for discrete fibrations yields an equivalence of categories
	\[
	\begin{tikzcd}
		{\displaystyle\int_{\scr{I}}^{\scr{O}} 	}:&[-3em]\on{Fun}^{\scr{O},\on{lax}}\left((\scr{I},\otimes),(\Set,\times)\right) \arrow[r] & \scr{O}\sf{Fib}_{\scr{I}}. 
	\end{tikzcd}
	\]
\end{thm}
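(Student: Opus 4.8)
The plan is to bootstrap off the classical Grothendieck equivalence $\int_{\scr{I}}\colon \Fun(\scr{I},\Set)\to \sf{DFib}(\scr{I})$, showing that the additional $\scr{O}$-monoidal data on each side correspond bijectively and functorially, compatibly with this underlying equivalence. Both the domain and codomain of $\int_{\scr{I}}^{\scr{O}}$ forget down to $\Fun(\scr{I},\Set)$ and $\sf{DFib}(\scr{I})$ respectively (forgetting the lax structure $\xi$, resp. the $\scr{O}$-monoidal structure $\odot$), and since the base functor is already an equivalence, it suffices to exhibit a natural bijection between $\scr{O}$-monoidal enhancements of $F$ and of $\int_{\scr{I}}F$, and likewise on morphisms.

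For the forward direction, given a lax $\scr{O}$-functor $(F,\xi)$, I would equip $\scr{C}:=\int_{\scr{I}}F$ with operations $\odot_z\colon \scr{C}^{\times n}\to\scr{C}$ for $z\in\scr{O}(n)$ by setting, on objects,
\[
\odot_z\big((i_1,x_1),\ldots,(i_n,x_n)\big):=\big(\otimes_z(i_1,\ldots,i_n),\,\xi^z_{(i_1,\ldots,i_n)}(x_1,\ldots,x_n)\big),
\]
and on morphisms by the functoriality of $\otimes_z$ together with the naturality of $\xi^z$; this is well-defined precisely because $\xi^z$ is valued in $F(\otimes_z(\vec{\imath}))$. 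By construction the projection $\pi\colon\scr{C}\to\scr{I}$ is a strict $\scr{O}$-monoidal functor. The structure isomorphism $\psi$ for $\scr{C}$ attached to a composite $z\circ(x_1,\ldots,x_n)$ is then forced to be the unique coCartesian lift through $\pi$ of the corresponding $\phi$ in $\scr{I}$; such a lift is automatically an isomorphism, since $\phi$ is and discrete fibrations reflect isomorphisms lying over isomorphisms.

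The crux of the argument, and the step I expect to be the main obstacle, is checking that this $\psi$ is genuinely well-defined, i.e.\ that the unique lift of $\phi$ has the correct target object in $\scr{C}$. Unwinding objects, this amounts to the identity
\[
F(\phi)\big(\xi^z(\xi^{x_1}(-),\ldots,\xi^{x_n}(-))\big)=\xi^{z\circ(x_1,\ldots,x_n)}(-)
\]
in the fibre, together with the analogous identity for the permutation isomorphisms; but these are exactly the coherence conditions satisfied by the lax $\scr{O}$-monoidal functor $(F,\xi)$. Conversely, these same identities can be read off from the lifts, so the two packages of data are literally interchangeable. The remaining coherence axioms for $(\scr{C},\odot,\psi)$ (associativity and unitality of $\psi$) hold automatically: each side of such a diagram is a morphism of $\scr{C}$ lying over a single coherence morphism of $(\scr{I},\otimes,\phi)$, and since $\pi$ is a discrete fibration these lifts are unique, so commutativity in $\scr{I}$ forces commutativity in $\scr{C}$. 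This rigidity is what makes the discrete case clean: all higher coherence is inherited from $\scr{I}$, and the only genuinely new data live fibrewise and are encoded by $\xi$.

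Finally, for the reverse assignment and for morphisms, I would recover $(F,\xi)$ from an $\scr{O}$-monoidal fibration $(\pi\colon\scr{C}\to\scr{I},\odot,\psi)$ by taking $F$ to be the inverse-Grothendieck functor and defining $\xi^z$ as the fibrewise component of $\odot_z$, using the strictness of $\pi$ to identify the base of $\odot_z(\vec{x})$ with $\otimes_z(\vec{\imath})$; the coherences of $\xi$ follow from those of $\psi$ exactly as above. On morphisms, a natural transformation $\mu\colon F\Rightarrow G$ is an $\scr{O}$-monoidal transformation precisely when it commutes with the $\xi$'s, and this translates, under $\int_{\scr{I}}$, into the induced functor over $\scr{I}$ being a strict $\scr{O}$-monoidal morphism of fibrations. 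Since these assignments are mutually inverse on objects and on morphisms and commute with the classical equivalence, $\int_{\scr{I}}^{\scr{O}}$ is an equivalence of categories.
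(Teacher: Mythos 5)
Your proposal is correct in strategy, but it cannot be compared line-by-line with an in-paper argument, because the paper does not actually prove this theorem: its proof is the single line ``This is \cite{OMonGroth}'', deferring to the companion paper where the $\Set$-valued $\scr{O}$-monoidal Grothendieck construction is developed. Read against that backdrop, your self-contained argument is the natural one, and its key insight is exactly right: since $\pi$ is a discrete fibration required to be a \emph{strict} $\scr{O}$-monoidal functor, every structure isomorphism upstairs is forced to be the unique lift of the corresponding component of $\phi$ in $\scr{I}$, so the only genuine data in an object of $\scr{O}\sf{Fib}_{\scr{I}}$ are the object-level fibrewise operations, and these correspond precisely to the lax structure maps $\xi^z$; the well-definedness of the lifted $\psi$ (that the unique lift of $\phi$ lands on the correct target object) is precisely the coherence condition on $(F,\xi)$, and all remaining coherence upstairs follows from uniqueness of lifts. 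Two small points you leave implicit are handled by the same rigidity argument and should be flagged: the functoriality of $\odot_z$ on morphisms and the naturality of $\psi$ (forward direction), and the naturality of the fibrewise $\xi^z$ (reverse direction), all follow because two morphisms of $\scr{C}$ with the same source and the same image in $\scr{I}$ coincide. Likewise, your claim that the two assignments are mutually inverse rests on the observation---worth stating explicitly---that for a strict projection which is a discrete fibration, the action of $\odot_z$ on morphisms and the components of $\psi$ are uniquely determined by the action of $\odot_z$ on objects, so reconstructing the fibration from its fibrewise data returns the original structure on the nose. What your approach buys is an elementary proof that works because discreteness trivializes all higher coherence; what the citation to \cite{OMonGroth} buys the authors is uniformity with the machinery there, where the $n$-ary operations are packaged as functors $\scr{O}(n)\times\C^{\times n}\to\C$ and the coherence bookkeeping cannot be dismissed so cheaply.
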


\begin{proof}
	This is \cite{OMonGroth}.
\end{proof}

We will extend this theorem into the realm of convex sets in two steps. Firstly, we consider a version of the Grothendieck construction whose input is an $\scr{O}$-lax functor $\scr{I}\to \Set$ together with an extension of the underlying functor to $\CSet$. We then consider $\scr{O}$-lax functors to $\CSet$ as a subcategory of this category, and identify its essential image.   

\begin{defn}
	Let $(\scr{I},\otimes,\phi)$ be a small $\scr{O}$-monoidal category. A \emph{fibrewise convex $\scr{O}$-monoidal fibration} over $\scr{I}$ consists of a $\scr{O}$-monoidal fibration $\pi:\scr{D}\to \scr{I}$ and a fibrewise convex structure on $\pi$. A \emph{morphism of fibrewise convex $\scr{O}$-monoidal fibrations} is simply a morphism of $\scr{O}$-monoidal fibrations which induces a convex map on each fibre over an object or morphism. We denote the category of fibrewise convex $\scr{O}$-monoidal fibrations over $\scr{I}$ by $\scr{O}\sf{FCFib}_{\scr{I}}$. 
\end{defn}
 
\begin{rmk}\label{rem:OFCFIB_hmtpypullback}
	Notice that $\scr{O}\sf{FCFib}_{\scr{I}}$ is simply the pullback of $\scr{O}\sf{Fib}_{\scr{I}}$ and $\sf{FCFib}_{\scr{I}}$ over the category $\sf{DFib}_{\scr{I}}$ of discrete fibrations over $\scr{I}$. Since the forgetful functor 
	\[
	\begin{tikzcd}
		\sf{FCFib}_{\scr{I}}\arrow[r] & \sf{DFib}_{\scr{I}}
	\end{tikzcd}
	\]
	is clearly an isofibration, this pullback is, in fact, a homotopy pullback in $\Cat$. 
\end{rmk}

\begin{defn}
	Given an $\scr{O}$-monoidal category $(\scr{I},\odot,\phi)$, we define a category $\on{HCFun}^{\scr{O},\on{lax}}((\scr{I},\odot),(\CSet,\otimes))$ to be the pullback 
	\[
	\Fun(\scr{I},\CSet)\times_{\Fun(\scr{I},\Set)} \Fun^{\scr{O},\on{lax}}((\scr{I},\odot),(\Set,\times)).
	\]
	Explicitly, the objects are lax $\scr{O}$-monoidal functors into $\Set$ together with a lift of the underlying functor to $\CSet$. One can equivalently think of this as a lax monoidal functor $(F,\xi)$ from $\scr{I}$ to $(\CSet,\times)$ in which the structure isomoprhisms $\xi$ are not required to be maps of convex sets. 
\end{defn}

\begin{lem}\label{lem:Half-convex_GC}
	For an $\scr{O}$-monoidal category $(\scr{I},\odot,\phi)$, the classical discrete Grothendieck construction induces an equivalence 
	\[
	\begin{tikzcd}
		{ \cint_{\scr{I}}^{\scr{O}}}: &[-3em]  \on{HCFun}^{\scr{O},\on{lax}}((\scr{I},\odot),(\CSet,\times)) \arrow[r] & \scr{O}\sf{FCFib}_{\scr{I}}.
	\end{tikzcd}
	\]
\end{lem}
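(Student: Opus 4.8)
The plan is to leverage the structural description in Remark \ref{rem:OFCFIB_hmtpypullback} together with the definition of $\on{HCFun}^{\scr{O},\on{lax}}$ as a pullback, and reduce the claim to the known equivalences already established in the excerpt. Concretely, both the source and target categories of $\cint_{\scr{I}}^{\scr{O}}$ are defined as pullbacks over a common base, and the functor in question should be induced by the compatible system of Grothendieck constructions on the legs of those pullbacks. So the first step is to write down the square of categories explicitly, exhibiting $\on{HCFun}^{\scr{O},\on{lax}}((\scr{I},\odot),(\CSet,\times))$ as the pullback of $\Fun(\scr{I},\CSet)$ and $\Fun^{\scr{O},\on{lax}}((\scr{I},\odot),(\Set,\times))$ over $\Fun(\scr{I},\Set)$, and $\scr{O}\sf{FCFib}_{\scr{I}}$ as the pullback of $\sf{FCFib}_{\scr{I}}$ and $\scr{O}\sf{Fib}_{\scr{I}}$ over $\sf{DFib}_{\scr{I}}$.

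Next I would observe that we already have equivalences on all three corners mapping into the base corner. On one leg, the classical Grothendieck construction gives an equivalence $\Fun(\scr{I},\Set)\simeq\sf{DFib}_{\scr{I}}$; restricting to the $\scr{O}$-monoidal data, the main theorem of \cite{OMonGroth} (quoted above) gives $\Fun^{\scr{O},\on{lax}}((\scr{I},\odot),(\Set,\times))\simeq\scr{O}\sf{Fib}_{\scr{I}}$; and on the other leg, applying the naive convex Grothendieck construction of Corollary \ref{cor:naiveGC} fibrewise (i.e. using that a lift to $\CSet$ is precisely a fibrewise convex structure, as unwound in the remark following that corollary) gives $\Fun(\scr{I},\CSet)\simeq\sf{FCFib}_{\scr{I}}$. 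The key point is that all three of these equivalences are compatible, in that they commute (up to the relevant natural isomorphism) with the forgetful functors down to the base equivalence $\Fun(\scr{I},\Set)\simeq\sf{DFib}_{\scr{I}}$, since each is implemented by the same underlying classical Grothendieck construction on underlying functors.

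I would then invoke the general principle that a map of cospans which is an objectwise equivalence induces an equivalence on pullbacks, \emph{provided} the pullbacks are homotopy pullbacks (or, equivalently here, that one leg is an isofibration). This is exactly what Remark \ref{rem:OFCFIB_hmtpypullback} provides on the fibration side: the forgetful functor $\sf{FCFib}_{\scr{I}}\to\sf{DFib}_{\scr{I}}$ is an isofibration, so the strict pullback computing $\scr{O}\sf{FCFib}_{\scr{I}}$ is a homotopy pullback. The analogous fact on the functor side is that $\Fun(\scr{I},\CSet)\to\Fun(\scr{I},\Set)$ is an isofibration (induced objectwise by the isofibration $\CSet\to\Set$ coming from the forgetful functor of the monadic adjunction), so that $\on{HCFun}^{\scr{O},\on{lax}}$ is also a homotopy pullback. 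Having identified both sides as homotopy pullbacks of equivalent cospans via compatible equivalences, the induced functor on pullbacks—which is precisely $\cint_{\scr{I}}^{\scr{O}}$—is an equivalence.

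The main obstacle I anticipate is not the abstract pullback argument, which is formal, but rather verifying the \emph{strict} compatibility needed to assemble the three Grothendieck constructions into a genuine morphism of cospans, and confirming that the induced functor really is the $\cint_{\scr{I}}^{\scr{O}}$ named in the statement rather than merely some equivalence between the two pullbacks. In particular, one must check that the convex structures on the fibres produced by the fibrewise-convex Grothendieck construction (Corollary \ref{cor:naiveGC}) agree on the nose with the convex-set lift recorded in an object of $\on{HCFun}^{\scr{O},\on{lax}}$, and that this agreement is natural enough to respect the $\scr{O}$-monoidal structure maps $\xi$. Since the $\xi$ here are \emph{not} required to be convex maps (as the definition of $\on{HCFun}^{\scr{O},\on{lax}}$ emphasizes), this is consistent, but it requires unwinding that the strictly $\scr{O}$-monoidal structure on the total category is carried entirely by the underlying-$\Set$ data, with the convexity living purely fibrewise; once this bookkeeping is done, the equivalence follows.
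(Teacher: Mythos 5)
Your proposal is correct and follows essentially the same route as the paper: the paper also exhibits both sides as (homotopy) pullbacks, uses the isofibration $\Fun(\scr{I},\CSet)\to\Fun(\scr{I},\Set)$ together with Remark \ref{rem:OFCFIB_hmtpypullback}, and concludes from the fact that the classical, na\"ive convex, and $\scr{O}$-monoidal Grothendieck constructions are all equivalences that the induced functor on pullbacks is one too. Your closing paragraph about checking strict compatibility of the three constructions is a reasonable point of care, but it is exactly the content the paper compresses into the phrase ``transformation of pullback diagrams,'' so it is not a divergence in method.
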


\begin{proof}
	The functor is induced by the universal property of the pullback applied to the transformation of pullback diagrams given by the classical, na\"ive convex, and $\scr{O}$-monoidal Grothendieck constructions. Using the fact that $\Fun(\scr{I},\CSet)\to\Fun(\scr{I},\Set)$ is an isofibration in combination with Remark \ref{rem:OFCFIB_hmtpypullback}, we see that both pullbacks are homotopy pullbacks. Since the three component Grothendieck constructions are equivalences of categories, it follows that the induced functor is an equivalence, as desired.  
\end{proof}

We now note that we can identify $\Fun^{\scr{O},\on{lax}}((\scr{I},\odot),(\CSet,\otimes))$ with the full subcategory of\linebreak $\on{HCFun}^{\scr{O},\on{lax}}((\scr{I},\odot),(\CSet,\times))$ on those lax functors $(F,\xi)$ for which all of the structure maps 
\[
\begin{tikzcd}
	\xi^z_{i_1,\ldots,i_n}:&[-3em]\bigtimes_{z}(F(i_1),\ldots,F(i_n))\arrow[r,Rightarrow] & F\left(\otimes_{z}(i_1,\ldots,i_n)\right)
\end{tikzcd}
\]
are $n$-convex. It remains for us to identify the essential image of this functor in $\scr{O}\sf{FCFib}_{\scr{I}}$. 

\begin{defn}
	A fibrewise convex $\scr{O}$-monoidal fibration $\pi:(\scr{C},\boxtimes,\psi)\to (\scr{I},\odot,\phi)$ is called a \emph{$\scr{O}$-convex fibration} if, for any $z\in \scr{O}(n)$ and $i_1,\ldots,i_n\in \scr{I}$, the induced map 
	\[
	\begin{tikzcd}
		\pi^{-1}(i_1)\times \cdots \times \pi^{-1}(i_n) \arrow[r,hookrightarrow] & \scr{C}^n \arrow[r,"\odot_{z}"] & \C 
	\end{tikzcd}
	\]
	is $n$-convex. We will denote the full subcategory of $\scr{O}\sf{FCFib}_{\scr{I}}$ on the $\scr{O}$-convex fibrations by $\scr{O}\sf{CFib}_{\scr{I}}$. 
\end{defn}

\begin{thm}
	For a $\scr{O}$-monoidal category $(\scr{I},\odot,\phi)$, the classical Grothendieck construction induces an equivalence of categories 
	\[
	\begin{tikzcd}
		\cint_{\scr{I}}^\scr{O}:&[-3em] \Fun^{\scr{O},\on{lax}}((\scr{I},\odot),(\CSet,\otimes))\arrow[r] & \scr{O}\sf{CFib}_{\scr{I}}.
	\end{tikzcd}
	\]
\end{thm}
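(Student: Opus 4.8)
The plan is to deduce this theorem from the equivalence of Lemma~\ref{lem:Half-convex_GC} by restricting it to appropriate full subcategories on each side. Recall that an equivalence of categories restricts to an equivalence between any full subcategory of its source and the corresponding full subcategory of its target; so once we know that $\cint_{\scr{I}}^{\scr{O}}$ is an equivalence on the ambient categories $\on{HCFun}^{\scr{O},\on{lax}}((\scr{I},\odot),(\CSet,\times))$ and $\scr{O}\sf{FCFib}_{\scr{I}}$, it suffices to check that it carries the full subcategory $\Fun^{\scr{O},\on{lax}}((\scr{I},\odot),(\CSet,\otimes))$ precisely onto the full subcategory $\scr{O}\sf{CFib}_{\scr{I}}$.

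First I would make explicit the identification, flagged in the text, of $\Fun^{\scr{O},\on{lax}}((\scr{I},\odot),(\CSet,\otimes))$ with a full subcategory of $\on{HCFun}^{\scr{O},\on{lax}}((\scr{I},\odot),(\CSet,\times))$. A lax $\scr{O}$-monoidal functor into $(\CSet,\otimes)$ carries, for each $z\in\scr{O}(n)$, a \emph{convex} structure map $F(i_1)\otimes\cdots\otimes F(i_n)\to F(\odot_z(i_1,\ldots,i_n))$; by the universal property of the tensor product (Proposition~\ref{prop:univ_prop_TP}) these are in natural bijection with $n$-convex maps $F(i_1)\times\cdots\times F(i_n)\to F(\odot_z(i_1,\ldots,i_n))$. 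Hence the objects of $\Fun^{\scr{O},\on{lax}}((\scr{I},\odot),(\CSet,\otimes))$ are exactly those objects $(F,\xi)$ of $\on{HCFun}^{\scr{O},\on{lax}}((\scr{I},\odot),(\CSet,\times))$ all of whose structure maps $\xi^z_{i_1,\ldots,i_n}$ are $n$-convex; and, again by Proposition~\ref{prop:univ_prop_TP}, compatibility of a natural transformation with the $\otimes$-structure maps coincides with compatibility with the associated $\times$-structure maps, so the inclusion is genuinely full.

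Next I would unwind the $\scr{O}$-monoidal Grothendieck construction of \cite{OMonGroth} to see what this condition becomes on the fibration side. There, the $\scr{O}$-monoidal operation $\boxtimes_z$ on the total category $\cint_{\scr{I}}^{\scr{O}}(F,\xi)$ acts on objects by $\boxtimes_z((i_1,x_1),\ldots,(i_n,x_n))=\left(\odot_z(i_1,\ldots,i_n),\,\xi^z_{i_1,\ldots,i_n}(x_1,\ldots,x_n)\right)$. Since the fiber $\pi^{-1}(i_k)$ is identified with $F(i_k)$, restricting $\boxtimes_z$ along the fibers recovers exactly the structure map $\xi^z_{i_1,\ldots,i_n}$, landing in $\pi^{-1}(\odot_z(i_1,\ldots,i_n))=F(\odot_z(i_1,\ldots,i_n))$. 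Thus the composite $\pi^{-1}(i_1)\times\cdots\times\pi^{-1}(i_n)\hookrightarrow\scr{C}^n\xrightarrow{\boxtimes_z}\scr{C}$ appearing in the definition of $\scr{O}\sf{CFib}_{\scr{I}}$ is precisely $\xi^z_{i_1,\ldots,i_n}$, and so it is $n$-convex for every $z$ if and only if all the $\xi^z_{i_1,\ldots,i_n}$ are $n$-convex. In other words, $\cint_{\scr{I}}^{\scr{O}}(F,\xi)$ is an $\scr{O}$-convex fibration precisely when $(F,\xi)$ lies in $\Fun^{\scr{O},\on{lax}}((\scr{I},\odot),(\CSet,\otimes))$.

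Combining the two steps, the equivalence of Lemma~\ref{lem:Half-convex_GC} restricts to the asserted equivalence, with the action on morphisms inherited automatically from fullness on both sides. The step I expect to be the main obstacle is the third one: extracting from \cite{OMonGroth} the precise formula for the fiberwise operation $\boxtimes_z$ and confirming that its restriction to fibers is \emph{literally} $\xi^z_{i_1,\ldots,i_n}$ rather than merely naturally isomorphic to it, since the entire identification of the two full subcategories rests on this being an equality on the nose. Once that is secured, the remainder is a formal consequence of restricting an equivalence to full subcategories.
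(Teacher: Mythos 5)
Your proposal is correct and follows essentially the same route as the paper: the paper identifies $\Fun^{\scr{O},\on{lax}}((\scr{I},\odot),(\CSet,\otimes))$ with the full subcategory of $\on{HCFun}^{\scr{O},\on{lax}}((\scr{I},\odot),(\CSet,\times))$ on functors with $n$-convex structure maps (via Proposition~\ref{prop:univ_prop_TP}) and then declares it ``immediate from the definitions'' that the essential image under the equivalence of Lemma~\ref{lem:Half-convex_GC} is $\scr{O}\sf{CFib}_{\scr{I}}$. Your write-up simply makes explicit the unwinding of $\boxtimes_z$ on fibres that the paper leaves implicit, including the on-the-nose identification of the fibrewise operation with $\xi^z_{i_1,\ldots,i_n}$ that you rightly flag as the crux.
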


\begin{proof}
	It is immediate from the definitions that the essential image of $\Fun^{\scr{O},\on{lax}}((\scr{I},\odot),(\CSet,\otimes))$ under the functor of Lemma \ref{lem:Half-convex_GC} is $\scr{O}\sf{CFib}_{\scr{I}}$, completing the proof. 
\end{proof}

\section{Examples and applications}\label{sec:EGs}

\subsection{Finite probability spaces and entropy}\label{subsec:Entropy}

In \cite{BaezFritzLeinster}, the authors prove a theorem which formulates \emph{information loss entropy} in categorical jargon. We briefly recall this result and propose a more synthetic reformulation using the technology of convex objects developed in the earlier section. 

Our category of interest is $\Prob$. Recall that the objects of this category are pairs $(X,p)$ where $X \in \Fin$ is a finite set and $p : X \to \RR_{\geq 0}$ is a probability measure, which just means $\sum_{x \in X} p(x) = 1$. A morphism $f : (X, p) \to (Y, q)$ between such pairs is a function $f: X \to Y$ such that for all $y \in Y$ we have $q(y) = \sum_{x \in f^{-1}(y)} p(x)$. Such functions are called \emph{measure preserving}.

    Let $(X, p) \in \Prob$ be a probability measure on a finite set $X$. The \emph{Shannon entropy} associated to this object is defined to be 
    $$H(X, p) = - \sum_{x \in X} p(x) ln(p(x))$$
    For a morphism $f : (X,p) \to (Y,q)$, the closely related quantity
    $$F(f) = H(Y, q) - H(X ,p)$$
    is of interest as it measures \emph{information loss} by $f$. In categorical jargon, $F$ is a mapping which assigns a non-negative real number to a morphism in $\Prob$. It turns out that, up to scalar constant, $F$ is  uniquely determined by some properties it satisfies.

    \begin{thm}[\cite{BaezFritzLeinster}, Theorem 2] \label{thm: entropy}
         Let $F : \Mor(\Prob) \to \RR_{\geq 0}$ be a function which satisfies the following conditions:
    \begin{itemize}
        \item [(i)] $F$ respects \emph{composition}, i.e.  whenever $f$ and $g$ are composable we have
        $$F(f \circ g) = F(f) + F(g)$$

        \item[(ii)] $F$ respects \emph{convexity}, i.e. for all $\lambda \in [0, 1]$ and $f, g$ we have
        $$F(\lambda f + (1-\lambda) g) = \lambda F(f) + (1-\lambda) F(g)$$
        Here, the left hand-side means the following. For $f : (X_1, p_1) \to (Y_1, q_1)$ and $g : (X_2, p_2) \to (Y_2, q_2)$, the convex combination $\lambda f + (1- \lambda)g$ is the unique morphism $(X_1 \coprod X_2, \lambda p_1 + (1-\lambda)p_2) \to (Y_1 \coprod Y_2, \lambda q_1 + (1-\lambda)q_2)$. 

        \item[(iii)] $F$ is \emph{continuous} in the following sense. We say that a sequence of morphisms $f_n : (X_n , p_n) \to (Y_n, q_n)$ converges to $f : (X, p) \to (Y, q)$ in case $X_n = X$, $Y_n = Y$ and $f_n = f$ for large $n$ and moreover $p_n(x)$ and $q_n(y)$ converge pointwise to $p(x)$ and $q(y)$ for all $x \in X, y \in Y$. Continuity of $F$ means that in such case, $F(f_n)$ converges to $F(f)$.
    \end{itemize}
    Then, $F$ is of the form
    $$F(f) = c(H(Y, q) - H(X ,p))$$
     for a unique scalar $c$, for all morphisms $f : (X,p) \to (Y, q)$ in $\Prob$.
    \end{thm}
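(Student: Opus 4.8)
The plan is to reduce $F$ to a single \emph{entropy-like function on objects} and then invoke a Faddeev-type characterisation of Shannon entropy. First I would exploit condition (i). Writing $t_{(X,p)}\colon (X,p)\to 1$ for the unique morphism to the terminal object $1=(\{\ast\},\delta_\ast)$, the identity $t_{(Y,q)}\circ f = t_{(X,p)}$ together with additivity gives $F(t_{(X,p)}) = F(f)+F(t_{(Y,q)})$, so that, setting $\Phi(X,p):=F(t_{(X,p)})$, every morphism satisfies $F(f)=\Phi(X,p)-\Phi(Y,q)$. Applying the same additivity to an isomorphism $\iota$ and its inverse forces $F(\iota)=0$ (both summands are non-negative and sum to $F(\mathrm{id})=0$), so $\Phi$ descends to a function of the isomorphism class of $(X,p)$; in particular it is symmetric under relabelling.

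Next I would extract a recursion (chain rule) for $\Phi$ from condition (ii). Given $f\colon (X,p)\to (Y,q)$, write $X_y=f^{-1}(y)$ and let $p_y$ be the conditional distribution $p_y(x)=p(x)/q(y)$ on the fibre. The key observation is that $f$ is literally the convex combination $\sum_y q(y)\,t_{(X_y,p_y)}$: the $q$-weighted coproduct of the fibrewise terminal maps has source $(\coprod_y X_y,\sum_y q(y)p_y)=(X,p)$, target $(Y,q)$, and underlying function $f$. Applying (ii) iteratively then yields $F(f)=\sum_y q(y)\,\Phi(X_y,p_y)$, and comparing with $F(f)=\Phi(X,p)-\Phi(Y,q)$ produces the recursivity
\[
\Phi(X,p)=\Phi(Y,q)+\sum_y q(y)\,\Phi(X_y,p_y).
\]

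Finally, I would feed these properties into Faddeev's theorem. The function $\Phi$ is symmetric, vanishes on the one-point space, is continuous by condition (iii), and satisfies the recursivity above (whose two-block specialisation is exactly the fundamental equation of information theory). Faddeev's characterisation then forces $\Phi(X,p)=c'\,H(X,p)$ for a single constant $c'$, whence $F(f)=c'\bigl(H(X,p)-H(Y,q)\bigr)$, i.e.\ the claimed form with $c=-c'$; the constant is pinned down (hence unique) by evaluating on any fixed non-trivial morphism, such as the terminal map out of the uniform distribution on two points.

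I expect the genuine obstacle to lie entirely in this last step. The two reductions above are purely formal consequences of (i) and (ii), but passing from the recursion to the explicit $-\sum_x p(x)\log p(x)$ is an analytic functional-equation argument, and continuity is essential there to exclude the pathological (non-measurable) solutions of the fundamental equation. It is worth emphasising that the convex decomposition $f=\sum_y q(y)\,t_{(X_y,p_y)}$ is precisely the kind of convex-combination structure formalised earlier in the paper, so this is exactly the point at which the convex-object technology can be used to repackage the classical argument synthetically.
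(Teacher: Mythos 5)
Your proposal is, in structure, exactly the proof of the cited source: note that the paper itself does not prove this statement at all, but imports it as \cite{BaezFritzLeinster}, Theorem 2 (the surrounding section is devoted to repackaging hypothesis (ii) via the convex Grothendieck construction, not to reproving the result). Your three steps --- reducing $F$ to the object function $\Phi=F(t_{(-)})$ via the terminal object and additivity, extracting a chain rule from convex linearity, and invoking Faddeev's characterisation with continuity used to exclude pathological solutions of the functional equation --- are precisely the skeleton of the Baez--Fritz--Leinster argument, so there is no genuinely different route to compare; your sign bookkeeping ($c=-c'$) also correctly matches the statement as transcribed here.

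One caveat, since you assert that the first two reductions are ``purely formal consequences of (i) and (ii)'': the decomposition $f=\sum_y q(y)\,t_{(X_y,p_y)}$ is \emph{not} available for every morphism. If $f$ is not surjective, some fibre $X_y$ is empty, and $\varnothing$ is not an object of $\Prob$ (no function on the empty set has total mass $1$), so the corresponding term cannot be written down; and if $q(y)=0$ with $X_y\neq\varnothing$, the conditional $p_y$ is undefined. Simply dropping the offending weight-zero terms does not work in the empty-fibre case: the source of the resulting convex combination then differs from $(X,p)$ by the missing fibres, and invariance of $\Phi$ under adding zero-mass points (expansibility) is not among your hypotheses --- it is a consequence of the theorem, so it cannot be assumed mid-proof. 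The repair is that you never need the chain rule in this generality: since $F(f)=\Phi(X,p)-\Phi(Y,q)$ already holds for all $f$, it suffices to verify Faddeev's recursion, which only involves surjective ``merge'' maps. There every fibre is non-empty, a zero-mass fibre may be assigned an arbitrary conditional distribution (its weight-zero term does not affect the convex combination, and the source still agrees with $(X,p)$ because $q(y)=0$ forces $p$ to vanish on $X_y$), and any remaining degenerate instances of the recursion follow from continuity (iii). With that adjustment your argument is complete and coincides with the source's proof.
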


    Leaving conditions (i) and (iii) aside for the moment (we will return to them in a bit), condition (ii) in the above result seems ad-hoc. We ask the following natural question.
    \begin{question}
        Where did the morphisms in $\Prob$ acquire a convex structure from?
    \end{question}
    In light of our machinery, we may propose the following answer.
    \begin{answer}
        $\Prob$ inherits a convex structure by virtue of being a convex Grothendieck construction. 
    \end{answer}
    We briefly explain. Consider the functor $\sf{Dist} : \Fin \to \Set$ be the functor which assigns to a finite set $X$ the set of probability distributions $D(X)$. Then, $\Prob$ is the Grothendieck construction of this functor,
    $$\Prob = \int_\Fin \sf{Dist}$$
    Extra structure on $\Prob$ reflects extra structure on the base functor $\sf{Dist}$. To begin with, all the sets $D(X)$, $X \in \Fin$, are convex sets and all induced morphisms are convex as well. Hence, we may promote $\sf{Dist}$ to a functor
    $$\sf{Dist} : \Fin \to \CSet$$
    However, this much structure would only equip a Grothendieck construction with e fibre-wise convex structure in terms of the structure forgetful functor $\pi : \Prob \to \Fin$ (as discussed in the previous section), not enough to explain condition (ii) above.

    For more structure, notice that for any two finite sets $X$ and $Y$ we have $D(X \amalg Y) \cong D(X) * D(Y)$. This tells us that $\sf{Dist}$ (weakly) preserves coproducts, or more generally that $\sf{Dist}$ is a monoidal functor. This would allow us to apply the monoidal Grothendieck construction (\cite{MoellerVasilakopoulou}), but this would only equip $\Prob$ with a monoidal structure. 

    The convex structure which appear in condition (ii) above is really encoded when we regard $\sf{Dist}$ as a lax $\QConv$-monoidal functor
    $$(\sf{Dist, \xi}) : (\Fin, \amalg) \to (\CSet, \otimes)$$
    We regard $(\Fin, \coprod)$ as a trivial $\QConv$-monoidal category, while $(\CSet, \otimes)$ has the $\QConv$-monoidal structure detailed in Example \ref{example : QConv-mon}. For each $\lambda \in [0,1]$, we define the structure map for the lax structure to be
    \[
    \begin{tikzcd}
        \xi_\lambda : &[-3em] D(X) \times D(Y) \arrow[r] & D(X) * D(Y) \\[-2em]
        & (p,q) \arrow[r, mapsto] & \lambda p + (1-\lambda)q
    \end{tikzcd}
    \]
    $\xi_\lambda$ is a convex map, and hence biconvex, so that we may regard it as a morphism from the tensor product $D(X) \otimes D(Y)$. In conclusion, the relevant convex structure is obtained when we regard
    $$\Prob = \cint_\Fin \sf{Dist}$$
    equipped with the $\QConv$-monoidal structure. 

    Besides convexity, the category $\Prob$ has some topological structure. This is also a consequence of $\Prob$ being a Grothendieck construction, since the topological features are due to the fact that for all $X \in \Fin$, the set $D(X) \subseteq \RR^{|X|}$ has a subspace topology (in fact, $D(X)$ is a convex topological space). This being said, the known notions of category enriched in topological spaces or topological category do not encompass $\Prob$ properly. 

    \begin{defn}
        A category $\C$ is said to be a \emph{fibre-wise topological category} with respect to a functor $\pi : \C \to \J$ if:
        \begin{itemize}
            \item For each object $i \in \J$, the set $\pi^{-1}(i)$ has a topology.
            \item For each morphism $f \in \Mor(\J)$, the preimage $\pi^{-1}(f)$ has a topology.
            \item The source and target functions, $s : \pi^{-1}(f) \to \pi^{-1}(s(f))$ and $t : \pi^{-1}(f) \to \pi^{-1}(t(f))$, are continuous for all morphisms $f \in \Mor(\J)$.
        \end{itemize}

        A morphism of fibre-wise topological categories $\pi : \C \to \J$ and $\theta: \D \to \I$ is a commutative square of functors
        \[\begin{tikzcd}
	\C & \D \\
	\J & \I
	\arrow["\pi"', from=1-1, to=2-1]
	\arrow["\theta", from=1-2, to=2-2]
	\arrow["F", from=1-1, to=1-2]
	\arrow["T"', from=2-1, to=2-2]
\end{tikzcd}\]
        such that 
        \begin{itemize}
            \item For all objects $i \in \J$, the induced map $F|_{\pi^{-1}(i)} : \pi^{-1}(i) \to \theta^{-1}(T(i))$ is continuous.
            \item For all morphisms $f \in \Mor(\J)$, the induced map $F|_{\pi^{-1}(f)} : \pi^{-1}(f) \to \theta^{-1}(T(f))$ is continuous.
        \end{itemize}
        We abuse terminology and just say $F : \C \to \D$ is \emph{continuous} and leave the rest of the structure understood in context.
    \end{defn}

    \begin{example}
        If a category $\C$ is an internal category in topological spaces then it is fibre-wise topological with respect to the terminal functor $\C \to *$. In particular, the topological monoid $(\RR_{\geq 0} , +)$ may be regarded as a one-object internal category in spaces $B\RR_{\geq 0}$ and hence as a fibre-wise topological category.
    \end{example}

    \begin{example}
        Let $\J$ be a small category and
        \[
        \begin{tikzcd}
            G : &[-3em] \J \arrow[r] & \sf{Top}
        \end{tikzcd}
        \]
        be a functor into a category of nice topological spaces. The Grothendieck construction $\int_\J G$ has the structure of a fibre-wise topological category with respect to the associated discrete fibration $\pi : \int_\J G \to \J$. 

        This is not difficult to check. For an object $i \in \J$, we have $\pi^{-1}(i) = G(i)$ and the latter is a topological space. For a morphism $f : i \to j$ in $\J$, we have $\pi^{-1}(f) = \{ (x, y) \ | \ x \in G(i), y \in G(j), (Gf)(x) = y \} \subset G(i) \times G(j)$, so we may equip $\pi^{-1}(f)$ with the subspace topology. The source and target functions fit in commutative triangles
        \[\begin{tikzcd}[column sep = small]
	{\pi^{-1}(f)} && {G(i)} \\
	& {G(i) \times G(j)}
	\arrow[hook', from=1-1, to=2-2]
	\arrow["pr"', two heads, from=2-2, to=1-3]
	\arrow["s", from=1-1, to=1-3]
\end{tikzcd} \ \ 
, \ \ 
\begin{tikzcd}[column sep = small]
	{\pi^{-1}(f)} && {G(j)} \\
	& {G(i) \times G(j)}
	\arrow[hook', from=1-1, to=2-2]
	\arrow["pr"', two heads, from=2-2, to=1-3]
	\arrow["t", from=1-1, to=1-3]
\end{tikzcd}
\]
where they are exhibited as being factored as an inclusion, which is assumed to be continuous by definition, and a projection. Hence, these maps are continuous. 
    \end{example}

    \begin{example}
        In particular, $\Prob$ is a fibre-wise topological category with respect to the forgetful functor $\Prob \to \Fin$ by virtue of being the Grothendieck construction of the $\sf{Top}$-valued functor $\sf{Dist}$. 
    \end{example}

    The next thing we would like to remark in the very setup of the categorical characterisation of information loss entropy. The result is categorical, in the sense that it is a statement about morphisms of a category and condition (i) is stated in terms of composition. Nonetheless, it strange, at least from the categorical angle, to study functions from the collection of morphisms in a category. This may be addressed by considering a functor $F : \Prob \to B\RR_{\geq 0}$. 

    We can summarise our observations in the form of a following fully-functorial reformulation of Theorem \ref{thm: entropy}.

    \begin{thm}
        Any continuous $\QConv$-monoidal functor $F \in \Fun^{\QConv, \on{cts}} (\Prob, B\RR_{\geq 0})$ is of the form
        $$F(f) = c(H(X,p) - H(Y, q))$$
        for some constant $c$,
        for all morphisms $f : (X,p) \to (Y,q)$ in $\Prob$.
    \end{thm}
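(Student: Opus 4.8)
The plan is to deduce this theorem directly from the Baez--Fritz--Leinster result, Theorem \ref{thm: entropy}, by exhibiting the data of a continuous $\QConv$-monoidal functor $F \in \Fun^{\QConv,\on{cts}}(\Prob, B\RR_{\geq 0})$ as exactly the data of a function $F : \Mor(\Prob) \to \RR_{\geq 0}$ satisfying conditions (i)--(iii) of that theorem. All the analytic content then comes from \cite{BaezFritzLeinster}; the work to be done here is purely the construction of a dictionary between the three layers of structure carried by $F$ and the three hypotheses of Theorem \ref{thm: entropy}. I would organize the proof around that dictionary.

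First I would treat the underlying functor. Since $B\RR_{\geq 0}$ has a single object and hom-monoid $(\RR_{\geq 0},+)$, the data of a functor $F : \Prob \to B\RR_{\geq 0}$ is precisely an assignment $F : \Mor(\Prob) \to \RR_{\geq 0}$ with $F(f \circ g) = F(f) + F(g)$, the normalization $F(\on{id}) = 0$ being a formal consequence. This is exactly condition (i). Next I would unwind the monoidal enhancement. The $\QConv$-monoidal structure on $B\RR_{\geq 0}$ has $\otimes_{\vec\alpha}$ acting on a tuple of morphisms $(r_1,\dots,r_n) \in \RR_{\geq 0}^n$ by $\sum_i \alpha_i r_i$, while on $\Prob = \cint_\Fin \sf{Dist}$, equipped with the $\QConv$-monoidal structure built from Example \ref{example : QConv-mon}, the operation $\otimes_{\vec\alpha}$ sends a tuple of morphisms to their convex combination $\sum_i \alpha_i f_i$. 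The key observation is that a strong $\QConv$-monoidal functor into $B\RR_{\geq 0}$ is automatically \emph{strict}: its structure morphisms must be invertible elements of $(\RR_{\geq 0},+)$, of which $0$ is the only one. Strictness then reads $F(\sum_i \alpha_i f_i) = \sum_i \alpha_i F(f_i)$; the binary instance $\vec\alpha = (\lambda,1-\lambda)$ is condition (ii), and conversely (ii) iterates to the general $n$-ary relation since $\QConv$ is generated under operadic composition by its binary operations. Thus the $\QConv$-monoidal structure is equivalent to condition (ii).

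Finally I would translate continuity. Here $\Prob$ is fibre-wise topological over $\Fin$ and $B\RR_{\geq 0}$ is fibre-wise topological over the terminal category, and $F$ is a morphism over the terminal functor $T : \Fin \to \ast$. Continuity on the object-fibres $\pi^{-1}(X) = D(X)$ is automatic, since the target fibre of objects in $B\RR_{\geq 0}$ is a point. Over a morphism $f : X \to Y$ of $\Fin$, the fibre $\pi^{-1}(f) \cong D(X)$ parametrizes the morphisms $(X,p) \to (Y, f_\ast p)$ of $\Prob$ lying over $f$, and the induced map lands in the morphism-fibre $\RR_{\geq 0}$ of $B\RR_{\geq 0}$; its continuity says precisely that $F$ depends continuously on $p \in D(X)$, with $q = f_\ast p$ varying continuously as well. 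This is exactly condition (iii). Having matched all three pieces, I would invoke Theorem \ref{thm: entropy} to conclude that $F(f) = c\,(H(Y,q) - H(X,p))$ for a unique constant $c$; the sign appearing in the statement is absorbed into $c$ and reflects the orientation forcing information loss $H(X,p) - H(Y,q)$ to be non-negative.

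I do not expect a serious obstacle, as the substance is entirely contained in \cite{BaezFritzLeinster}; the only delicate points are bookkeeping. The most subtle dictionary entry is the monoidal one: one must justify that the intended notion is the strong $\QConv$-monoidal functor (so that condition (ii) holds on the nose rather than merely up to a lax comparison), and observe that strong collapses to strict because $(\RR_{\geq 0},+)$ has trivial group of units. The second mild care-point is confirming that fibre-wise continuity over $T : \Fin \to \ast$ coincides exactly with the eventual-in-a-single-fibre convergence of condition (iii), which follows from the homeomorphism $\pi^{-1}(f) \cong D(X)$ and continuity of pushforward.
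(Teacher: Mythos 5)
Your proposal is correct and takes essentially the same route as the paper: the paper offers no separate proof, presenting the theorem as a direct summary of its preceding observations, which constitute exactly your dictionary --- functoriality into $B\RR_{\geq 0}$ is condition (i), the $\QConv$-monoidal structure (with $\otimes_{\vec\alpha}$ acting on $\RR_{\geq 0}$ by weighted sums) is condition (ii), fibre-wise continuity over $\Fin \to \ast$ is condition (iii) --- followed by an appeal to Theorem \ref{thm: entropy}. If anything, your write-up is more careful than the paper on the points it leaves implicit, notably pinning down the $\QConv$-monoidal structure on $B\RR_{\geq 0}$ and observing that strong collapses to strict because $(\RR_{\geq 0},+)$ has no nonzero units.
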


    \begin{rmk}
        We believe the statement of the theorem above can be structurally pushed a step further by saying that there is an isomorphism of convex topological monoids 
        $$\Fun^{\QConv, \on{cts}} (\Prob, B\RR_{\geq 0}) \cong \RR_{\geq 0}.$$
        
    \end{rmk}

\subsection{The convex monoid of twisted distributions}

In \cite[\S 4]{KharoofSimplicial}, the authors define notions of weak and strong invertibility for what they call \emph{convex monoids}. Unwinding their definitions, a convex monoid is a convex set, equipped with the structure of a monoid, such that the multiplication map is biconvex, i.e., a monoid object in $(\CSet,\otimes)$. These convex monoids, and the associated notions of weak and strong invertibility are of substantial interest, since they allow one to characterize quantum contextuality within the framework  of simplicial distributions from \cite{OkaySimplicial}. 

\begin{defn}
	The \emph{simplex category} $\Delta$ has objects $[n]=\{0,1,\ldots,n\}$ for $n\geq 0$, and morphisms given by weakly monotone maps. A \emph{simplicial set} is a functor
	\[
	\begin{tikzcd}
		X:&[-3em] \Delta^\op\arrow[r] & \Set
	\end{tikzcd}
	\]
\end{defn}

The theory of simplicial distributions presented in \cite{OkaySimplicial,KharoofSimplicial} models quantum contextuality by defining a simplicial set $X$ of measurements, and a simplicial set $Y$ of outcomes. In this context, an $n$-simplex of $X$ can be thought of as a \emph{measurement context} --- an $n$-tuple of measurements which can be performed simultaneously (e.g., commuting hermitian operators). The outcomes of quantum measurements are then a collection of probability distributions on contexts which are compatible with the simplicial identities. This compatibility encodes the so-called \emph{non-signaling conditions} of quantum mechanics. 

Formally, given simplicial sets $X$ and $Y$ a simplicial distribution from $X$ to $Y$ is a map of simplicial sets 
\[
\begin{tikzcd}
	p:&[-3em] X \arrow[r] & D(Y)
\end{tikzcd}
\]
or, equivalently, a morphism in the Kleisli category of the monad $D$ acting levelwise on the category $\Set_\Delta$. 

In recent work \cite{TwistDist} of the second and third-named authors, this framework of simplical distributions is extended to the setting of simplicial principal bundles. This extension captures the features of a number of examples arising from quantum computation. 

\begin{defn}
	A simplicial Abelian group is a functor 
	\[
	\begin{tikzcd}
		K: &[-3em] \Delta^\op \arrow[r] & \sf{Ab} 
	\end{tikzcd}
	\]
	where $\sf{Ab}$ denotes the category of Abelian groups. A \emph{principal $K$-bundle} consists of a left $K$-action on a simplicial set $E$ and a map of simplicial sets $\pi:E\to X$ such that the action on $E$ is free, and $\pi$ is the quotient map. A \emph{simplicial distribution} on a principal $K$-bundle $\pi:E\to X$ is a map of simplicial sets $p$ fitting into a commutative diagram 
	\[
	\begin{tikzcd}
		 & D(E)\arrow[d,"D(\pi)"]\\
		X\arrow[ur,"p"]\arrow[r,"\delta"'] & D(X)
	\end{tikzcd}
	\]
	where $\delta$ sends each simplex to the delta distribution on that simplex. We write $\sDist(\pi)$ for the convex set of simplicial distributions on $\pi$.  
\end{defn}  

The convex monoids studied in \cite{KharoofSimplicial} have a more categorical avatar in this setting, which arises via a monoidal Grothendieck construction. Writing $\sf{Bun}_K(X)$ for the groupoid of principal $K$-bundles over $X$, we can equip $\sf{Bun}_K(X)$ with a monoidal structure $-\otimes_K-$ defined by taking the quotient of $E\times_X F$ by the relation 
\[
(x,y)\sim (kx,k^{-1}y)
\]
for $k\in K$. By \cite[Theorem 2.30]{TwistDist}, this is a symmetric monoidal structure. 

Moreover, for any simplicial sets $X$ and $Y$, we can define maps 
\[
\begin{tikzcd}[row sep=0em]
	m_{X,Y}:&[-3em] D(X)\times D(Y) \arrow[r] & D(X\times Y)\\
	 & (p,q) \arrow[r,mapsto] & ((x,y)\mapsto p(x)q(x))
\end{tikzcd}
\]  
For a pair of principal bundles $\pi_E:E\to X$ and $\pi_F:F\to X$, the map $m_{X,Y}$ induces a map 
\[
\begin{tikzcd}
	\mu_{E,F}: &[-3em] \sDist(\pi_E)\times \sDist(\pi_F) \arrow[r] & \sDist(\pi_{E\otimes_K F})
\end{tikzcd}
\]
since these maps are biconvex, they may be viewed as convex maps  
\[
\begin{tikzcd}
	\mu_{E,F}: &[-3em] \sDist(\pi_E)\otimes \sDist(\pi_F) \arrow[r] & \sDist(\pi_{E\otimes_K F})
\end{tikzcd}
\]
out of the convex tensor product. We then have 

\begin{prop}\textup{\cite[Prop. 3.11]{TwistDist}}
	The maps $\mu_{E,F}$ are the structure maps of a lax symmetric monoidal functor 
	\[
	\begin{tikzcd}
		\sDist: &[-3em] (\sf{Bun}_K(X),\otimes_K) \arrow[r] & (\CSet,\otimes).
	\end{tikzcd}
	\]
\end{prop}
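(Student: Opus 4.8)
The plan is to verify directly the three packages of data defining a lax symmetric monoidal functor — functoriality of the assignment $\pi \mapsto \sDist(\pi)$, a unit comparison, and the structure maps $\mu_{E,F}$ — and then to reduce the coherence axioms to facts already in hand. First I would record that $\sDist$ is genuinely a functor $\sf{Bun}_K(X) \to \CSet$: a morphism of principal bundles $g \colon E \to F$ over $X$ is in particular a map of simplicial sets over $X$, so post-composition with $D(g)$ carries a simplicial distribution on $\pi_E$ to one on $\pi_F$, and this respects identities and composition because $D$ does. That each resulting map of $\sDist$'s is convex is immediate from the formula for the convex structure on $D(E)$.

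Second, I would pin down the structure map concretely. For a simplex $\sigma$, a simplicial distribution $p$ on $\pi_E$ satisfies $D(\pi_E)(p) = \delta$, so $p(\sigma)$ is supported on the fibre $\pi_E^{-1}(\sigma)$; consequently $m_{E,F}(p(\sigma), q(\sigma))$ is automatically supported on the fibre product $E \times_X F$. Thus $\mu_{E,F}$ is the composite of $m_{E,F}$ with the pushforward $D(q)$ along the quotient map $q \colon E \times_X F \to E \otimes_K F$, and it lands in $\sDist(\pi_{E \otimes_K F})$. Since the excerpt already shows $\mu_{E,F}$ is biconvex, Proposition \ref{prop:univ_prop_TP} lets us regard it as a genuine morphism $\sDist(\pi_E) \otimes \sDist(\pi_F) \to \sDist(\pi_{E \otimes_K F})$ in $\CSet$, so the structure maps are legitimate. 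Naturality of $\mu$ in both arguments then follows from naturality of $m_{-,-}$ together with the functoriality of $\otimes_K$ and of $D(q)$. For the unit, I would take the comparison morphism $\mathbf{1} \to \sDist(\pi_U)$, where $\pi_U$ is the monoidal unit of $(\sf{Bun}_K(X), \otimes_K)$; because $\mathbf{1}$ is terminal in $\CSet$, this amounts to singling out the distinguished distribution on the unit bundle, and the unit triangles commute for the same reason $m_{-,-}$ is unital for the product of distributions.

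The main work — and the main obstacle — is the associativity and symmetry coherence. Here I would exploit two inputs: the maps $m_{-,-}$ make $D$ a symmetric lax monoidal functor from $(\Set_\Delta, \times)$ to itself (the associator and braiding for $m$ being induced from those of $\times$), and, by \cite{TwistDist}, the associator, unitors, and braiding of $\otimes_K$ are the maps induced on quotients by the corresponding coherences of $\times_X$. Because $D(q)$ is natural in the quotient, the required hexagon, pentagon, and unit diagrams for $(\sDist, \mu)$ factor as the corresponding diagrams for $m$ (which commute) post-composed with pushforwards along compatible quotient maps. Crucially, since each structure map is biconvex, Proposition \ref{prop:univ_prop_TP} guarantees that a coherence diagram in $(\CSet, \otimes)$ commutes as soon as the underlying diagram of biconvex maps out of the Cartesian products $\sDist(\pi_{E_1}) \times \cdots \times \sDist(\pi_{E_n})$ commutes; this removes any need to manipulate the tensor products directly and reduces every coherence check to a computation on product distributions, where it is routine. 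The delicate point to get right is that the quotient by the diagonal $K$-action in $\otimes_K$ is compatible with reassociating and swapping the factors — i.e. that the evident isomorphisms $(E \otimes_K F) \otimes_K G \cong E \otimes_K (F \otimes_K G)$ and $E \otimes_K F \cong F \otimes_K E$ intertwine the pushforwards $D(q)$ — which is exactly the content secured by the symmetric monoidality of $\otimes_K$ in \cite{TwistDist}.
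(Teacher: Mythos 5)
The paper itself offers no proof of this proposition: it is imported verbatim as \cite[Prop.~3.11]{TwistDist}, so there is no in-paper argument to compare yours against. Judged on its own merits, your reconstruction follows what is surely the intended route: realize $\mu_{E,F}$ as the product-of-distributions map $m$ followed by pushforward along the quotient $E\times_X F\to E\otimes_K F$ (your support observation is exactly what makes this well defined), use the universal property of the convex tensor product (Proposition \ref{prop:univ_prop_TP}) to reduce every coherence diagram in $(\CSet,\otimes)$ to a diagram of $n$-convex maps out of Cartesian products, and then let the coherences of $m$ and the symmetric monoidality of $\otimes_K$ (\cite[Theorem~2.30]{TwistDist}) carry the rest. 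That structure is sound, and the naturality and associativity/symmetry reductions you describe go through.

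The one step that needs to be made precise is the unit, where ``the distinguished distribution on the unit bundle'' is ambiguous and the choice genuinely matters. The unit of $(\sf{Bun}_K(X),\otimes_K)$ is the trivial bundle $K\times X\to X$, and under the canonical isomorphism $E\otimes_K(K\times X)\cong E$ given by $[e,(k,\sigma)]\mapsto k\cdot e$, the composite $\sDist(\pi_E)\otimes\mathbf{1}\to\sDist(\pi_E)\otimes\sDist(\pi_{K\times X})\to\sDist(\pi_E)$ sends $p$ to the convolution of $p$ with the chosen unit distribution. Hence the unit constraint must be the Dirac distribution on the identity (zero) section, $\sigma\mapsto\delta_{(0,\sigma)}$; an equally ``canonical'' candidate such as the fibrewise uniform distribution (when $K$ is finite) fails the unit triangle, since convolving with it smears $p$ over $K$-orbits. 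With the Dirac choice your ``same reason $m$ is unital'' argument is correct --- convolution with a point mass at the identity is the identity --- but the verification genuinely passes through the $K$-action via the isomorphism above, and one should also record that the zero section is simplicial (faces and degeneracies are group homomorphisms), so this really is an element of $\sDist(\pi_{K\times X})$. A small additional slip: terminality of $\mathbf{1}$ in $\CSet$ is irrelevant to specifying a map \emph{out of} $\mathbf{1}$; all you use is that $\mathbf{1}$ is a singleton.
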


From the convex $\O$-monoidal Grothendieck construction, we them obtain 

\begin{cor}
	The category $\cint_{\sf{Bun}_K(X)} \sDist$ is a fibrewise convex monoidal category over $\sf{Bun}_K(X)$.
\end{cor}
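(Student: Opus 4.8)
The plan is to recognize the Corollary as a direct instance of the convex $\scr{O}$-monoidal Grothendieck construction (the final Theorem of this section), specialized to the commutative operad $\scr{O}=\Comm$. First I would invoke the identification, recorded in the Example following the definition of $\scr{O}$-monoidal category, that $\Comm$-monoidal categories are precisely symmetric monoidal categories and that lax $\Comm$-monoidal functors are precisely lax symmetric monoidal functors. Under this identification, the source $(\sf{Bun}_K(X),\otimes_K)$ --- which is symmetric monoidal by \cite[Theorem 2.30]{TwistDist} --- becomes a $\Comm$-monoidal category, and the functor $\sDist$ of the preceding Proposition becomes an object of $\Fun^{\Comm,\on{lax}}((\sf{Bun}_K(X),\otimes_K),(\CSet,\otimes))$. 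The notation $\cint_{\sf{Bun}_K(X)}$ appearing in the Corollary is then understood as the $\Comm$-monoidal Grothendieck construction $\cint_{\sf{Bun}_K(X)}^{\Comm}$.

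Next I would apply the Theorem with $\scr{I}=\sf{Bun}_K(X)$ and $\scr{O}=\Comm$ to conclude that $\cint_{\sf{Bun}_K(X)}^{\Comm}\sDist$ is an object of $\Comm\sf{CFib}_{\sf{Bun}_K(X)}$, i.e.\ a $\Comm$-convex fibration over $\sf{Bun}_K(X)$. Unwinding the definition of $\scr{O}\sf{CFib}_{\scr{I}}$, such an object is exactly a fibrewise convex $\Comm$-monoidal fibration whose fibrewise $n$-ary operations are $n$-convex; reinstating the identification of $\Comm$-monoidal structures with symmetric monoidal structures, this is precisely a fibrewise convex (symmetric) monoidal category over $\sf{Bun}_K(X)$, as claimed.

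The one point requiring verification --- and the main, though modest, obstacle --- is confirming that $\sDist$ lands in the subcategory $\Fun^{\Comm,\on{lax}}((\sf{Bun}_K(X),\otimes_K),(\CSet,\otimes))$ of lax functors whose structure maps are $n$-convex, rather than merely in the larger category $\on{HCFun}^{\Comm,\on{lax}}$ of Lemma \ref{lem:Half-convex_GC}. This is exactly the hypothesis guaranteeing that the output is an $\scr{O}$-convex fibration and not just a fibrewise convex $\scr{O}$-monoidal fibration. But it is supplied by the preceding Proposition: the structure maps $\mu_{E,F}$ are convex maps out of the convex tensor product $\sDist(\pi_E)\otimes\sDist(\pi_F)$, which by the universal property of Proposition \ref{prop:univ_prop_TP} is precisely the assertion that the corresponding maps out of the product are biconvex, hence $n$-convex in the sense demanded by the definition of $\scr{O}\sf{CFib}_{\scr{I}}$. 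With this in hand, the Corollary follows immediately from the Theorem.
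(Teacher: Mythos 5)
Your proposal is correct and matches the paper's intended argument: the corollary is obtained exactly by feeding the lax symmetric monoidal (i.e.\ lax $\Comm$-monoidal) functor $\sDist$ of the preceding Proposition into the convex $\scr{O}$-monoidal Grothendieck construction theorem. Your extra check that the structure maps $\mu_{E,F}$ are genuinely maps out of the convex tensor product (hence biconvex, so the output lands in $\Comm\sf{CFib}$ rather than merely $\on{HCFun}$'s image) is precisely the content the paper delegates to \cite[Prop.~3.11]{TwistDist}.
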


Of particular computation use in the study of simplicial principal bundles, one may define \emph{twisting functions}, collections of maps $\{\eta_n:X_n\to K_{n-1}\}$ satisfying some compatibilities with simplicial maps. From a twisting function $\eta$, one may define a \emph{twisted product} $\pi_\eta: K\times_\eta X\to X$ which is a simplicial principal bundle (see \cite{May67} or \cite{TwistDist} for details). Twisting functions form a commutative monoid $\on{Twist}_K(X)$ under addition, and, viewing this as a discrete symmetric monoidal category, we have the following relation with the monoidal structure on $\sf{Bun}_K(X)$. 

\begin{prop}
	The functor 
	\[
	\begin{tikzcd}[row sep=0em ]
		\on{Twist}_K(X) \arrow[r] & \sf{Bun}_K(X) \\
		\eta \arrow[r,mapsto] & K\times_\eta X 
	\end{tikzcd}
	\]
	is symmetric monoidal and essentially surjective. 
\end{prop}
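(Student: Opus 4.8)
The plan is to treat the two assertions separately. At the outset I note that $\on{Twist}_K(X)$, being a commutative monoid regarded as a \emph{discrete} symmetric monoidal category, has only identity morphisms; hence the functor is completely determined by its value $\eta\mapsto K\times_\eta X$ on objects, and functoriality (preservation of identities and composites) is automatic. Only the monoidal and symmetric structure, and essential surjectivity, require genuine work.

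For the monoidal structure, the heart of the matter is a natural isomorphism
\[
(K\times_\eta X)\otimes_K (K\times_{\eta'} X)\;\cong\; K\times_{\eta+\eta'} X,
\]
and this is exactly where the hypothesis that $K$ is abelian is used. In simplicial degree $n$ the fibre product $(K\times_\eta X)\times_X(K\times_{\eta'}X)$ is $K_n\times K_n\times X_n$, and $\otimes_K$ quotients by $((k,x),(k',x))\sim((lk,x),(l^{-1}k',x))$ for $l\in K_n$. Since $K_n$ is abelian, the multiplication $(k,k')\mapsto kk'$ is invariant under this relation, so it descends to a bijection of the quotient with $K_n\times X_n$. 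I would then check that under this bijection the $K$-action and all simplicial structure maps correspond: the degeneracies and the faces $d_i$ for $i>0$ are untwisted and match immediately, while for $d_0$ the two twists combine as $\eta(x)\eta'(x)=(\eta+\eta')(x)$, again using commutativity and the fact that $d_0$ is a homomorphism. The unit isomorphism $K\times_0 X\cong I$ identifies the untwisted product with the monoidal unit of $(\sf{Bun}_K(X),\otimes_K)$.

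It then remains to verify the coherence axioms. The associativity and unit coherences follow from associativity and unitality of addition of twisting functions combined with associativity of multiplication in $K$; compatibility with the symmetry amounts to checking that the braiding $E\otimes_K F\to F\otimes_K E$ of $\sf{Bun}_K(X)$ corresponds, under the isomorphisms above, to the identity of $K\times_{\eta+\eta'} X=K\times_{\eta'+\eta} X$. This holds precisely because the braiding sends $(k,k')\mapsto(k',k)$ and $K$ is abelian, matching the trivial symmetry $\eta\otimes\eta'=\eta'\otimes\eta$ of the discrete category $\on{Twist}_K(X)$. All of these are routine diagram checks once the degreewise formulas are in hand. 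For essential surjectivity, the task is to exhibit, for an arbitrary principal $K$-bundle $\pi:E\to X$, a twisting function $\eta$ with $E\cong K\times_\eta X$. The plan is the classical one for principal twisted Cartesian products: each $\pi_n:E_n\to X_n$ is a free $K_n$-set with orbit space $X_n$, so one chooses degreewise sections $\sigma_n:X_n\to E_n$ compatible with degeneracies and with the faces $d_i$ for $i>0$ (a ``pseudo-cross-section'', built by induction over skeleta, extending over nondegenerate simplices), and defines $\eta:X_n\to K_{n-1}$ as the unique element measuring the failure of $\sigma$ to commute with $d_0$, via $d_0\sigma_n(x)=\eta(x)\cdot\sigma_{n-1}(d_0x)$. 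One checks $\eta$ satisfies the twisting-function identities and that $(k,x)\mapsto k\cdot\sigma_n(x)$ is an isomorphism $K\times_\eta X\xrightarrow{\cong}E$ of principal $K$-bundles; this is the classification of principal bundles by twisting functions, for which I cite \cite{May67} and \cite{TwistDist}.

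I expect the main obstacle to be essential surjectivity, and within it the construction of the pseudo-cross-section: the inductive extension over skeleta and the verification that the resulting $\eta$ is a bona fide twisting function are the only genuinely non-formal steps. By contrast, the symmetric monoidal structure, once the degreewise description of $\otimes_K$ is unwound, reduces to bookkeeping that hinges entirely on $K$ being abelian.
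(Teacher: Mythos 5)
Your proposal is correct, and it is essentially the argument the paper itself relies on: the paper states this proposition without proof, deferring to the classical theory of twisted Cartesian products in \cite{May67} and to \cite{TwistDist}, and your proof simply fleshes out that material. The degreewise identification $(K\times_\eta X)\otimes_K(K\times_{\eta'}X)\cong K\times_{\eta+\eta'}X$ via $((k,x),(k',x))\mapsto(kk',x)$, its compatibility with $d_0$ (where the two twists add) and with the braiding (both hinging on $K$ being abelian), the observation that naturality and functoriality are vacuous over a discrete base, and the reduction of essential surjectivity to the existence of pseudo-cross-sections are all exactly the intended route. One point worth making explicit in the essential surjectivity step: the inductive construction of the pseudo-cross-section is not mere bookkeeping with free actions. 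When you define $\sigma$ on a nondegenerate $n$-simplex $x$, you must produce $e\in E_n$ lying over $x$ with $d_ie=\sigma(d_ix)$ for all $i>0$, i.e.\ you must fill a $\Lambda^n_0$-horn in $E$ compatibly with the projection; this uses the fact that the quotient map of a free simplicial group action is a Kan fibration, not just freeness of the degreewise actions. Since you cite \cite{May67} for precisely this step and correctly flag it as the only non-formal part of the argument, the proof stands as a faithful expansion of what the paper leaves to the references.
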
 

As a result, the convex Grothendieck construction allows us to conclude that 
\[
\coprod_{\on{Twist}_K(X)} \sDist(\pi_\eta)
\]
is a monoid over $\on{Twist}_K(X)$, and that the multiplication is fibrewise biconvex. This is the avatar, in the bundle setting, of the convex monoid of simplicial distributions studied in \cite{KharoofSimplicial}, and, indeed, the fibre over the trivial twisting function zero retrieved the convex monoid studied in op. cit.

	\printbibliography

@article{OMonGroth,
	title={An $\mathcal{O}$-monoidal Grothendieck Construction},
	author={Haderi, Redi and Stern, Walker H.},
	journal={in preparation}
}

@misc{Leinster_2011, 
	title={An Operadic Introduction to Entropy},
	url={https://golem.ph.utexas.edu/category/2011/05/an_operadic_introduction_to_en.html}, 
	journal={The n-Category Cafe}, author={Leinster, Tom}, 
	year={2011}, 
	month={May}}

@article{BaezFritzLeinster,
	AUTHOR = {Baez, John C. and Fritz, Tobias and Leinster, Tom},
	TITLE = {A Characterization of Entropy in Terms of Information Loss},
	JOURNAL = {Entropy},
	VOLUME = {13},
	YEAR = {2011},
	NUMBER = {11},
	PAGES = {1945--1957},
	DOI = {10.3390/e13111945}
}

@article{neumann_quasivariety_1970,
	title = {On the quasivariety of convex subsets of affine spaces},
	volume = {21},
	issn = {1420-8938},
	url = {https://doi.org/10.1007/BF01220869},
	doi = {10.1007/BF01220869},
	pages = {11--16},
	number = {1},
	journaltitle = {Archiv der Mathematik},
	shortjournal = {Archiv der Mathematik},
	author = {Neumann, Walter D.},
	date = {1970-12-01},
}

@book {May67,
	AUTHOR = {May, J. P.},
	TITLE = {Simplicial objects in algebraic topology},
	SERIES = {Van Nostrand Mathematical Studies, No. 11},
	PUBLISHER = {D. Van Nostrand Co., Inc., Princeton, N.J.-Toronto,
	Ont.-London},
	YEAR = {1967},
	PAGES = {vi+161},
	MRCLASS = {55.40},
	MRNUMBER = {0222892},
	MRREVIEWER = {A. K. Bousfield},
}

@article{TwistDist,
	title={Twisted simplicial distributions},
	author={Okay, Cihan and Stern, Walker H.},
	journal={in preparation}
}

@misc{fritz2015convex,
	title={Convex Spaces I: Definition and Examples}, 
	author={Tobias Fritz},
	year={2015},
	eprint={0903.5522},
	archivePrefix={arXiv},
	primaryClass={math.MG}
}

@misc{fritz2009presentation,
	title={A presentation of the category of stochastic matrices}, 
	author={Tobias Fritz},
	year={2009},
	eprint={0902.2554},
	archivePrefix={arXiv},
	primaryClass={math.CT}
}

@book{Leinster_2021, 
	place={Cambridge}, 
	title={Entropy and Diversity: The Axiomatic Approach},
	publisher={Cambridge University Press}, 
	author={Leinster, Tom}, 
	year={2021} }

@article{BaezCranz,
	author = { John C. Baez and Alissa S.  Crans},
	journal = {Theory and Applications of Categories},
	pages = {492-538},
	title = {Higher-dimensional algebra. VI: Lie 2-algebras.},
	volume = {12},
	year = {2004}
}

@article{MoellerVasilakopoulou,
	author = {Joe Moeller and Christina Vasilakopoulou},
	journal = {Theory and Applications of Categories},
	pages = {1159-1207},
	title = { Monoidal Grothendieck Construction },
	url = {http://eudml.org/doc/124264},
	volume = {35},
	year = {2020}
}

@article{Swiriszcz,
	author={\'Swiriszcz, T.},
	title={Monadic functors and convexity},
	journal={Bulletin de l'Acad\'emie Polonaise des Sciences},
	pages= {39--42},
	volume={XXII},
	number={1},
	year={1974}
}

@misc{jacobs2009duality,
	title={Duality for Convexity}, 
	author={Bart Jacobs},
	year={2009},
	eprint={0911.3834},
	archivePrefix={arXiv},
	primaryClass={math.LO}
}

@misc{KharoofSimplicial,
	title={Simplicial distributions, convex categories and contextuality}, 
	author={Aziz Kharoof and Cihan Okay},
	year={2022},
	eprint={2211.00571},
	archivePrefix={arXiv},
	primaryClass={math.CT}
}

@article{OkaySimplicial,
	title={Simplicial quantum contextuality},
	author={Cihan Okay and Aziz Kharoof and Selman Ipek},
	journal={Quantum},
	volume={7},
	pages={1009},
	year={2023}
}

@book{hackney2015infinity,
  title={Infinity properads and infinity wheeled properads},
  author={Hackney, Philip and Robertson, Marcy and Yau, Donald and Yau, Donald Ying},
  volume={2147},
  year={2015},
  publisher={Springer}
}

@article{hackney2015category,
  title={On the category of props},
  author={Hackney, Philip and Robertson, Marcy},
  journal={Applied Categorical Structures},
  volume={23},
  pages={543--573},
  year={2015},
  publisher={Springer}
}

@book{leinster2004higher,
  title={Higher operads, higher categories},
  author={Leinster, Tom},
  number={298},
  year={2004},
  publisher={Cambridge University Press}
}

@inproceedings{appelgate1969coequalizers,
  title={Coequalizers in categories of algebras},
  author={Appelgate, H and Barr, M and Beck, J and Lawvere, FW and Linton, FEJ and Manes, E and Tierney, M and Ulmer, F and Linton, FEJ},
  booktitle={Seminar on Triples and Categorical Homology Theory: ETH 1966/67},
  pages={75--90},
  year={1969},
  organization={Springer}
}

\end{document}